\newtheorem{thm}{Theorem}
\newtheorem{lem}[thm]{Lemma}
\newtheorem{prop}[thm]{Proposition}
\newtheorem{cor}[thm]{Corollary}
\theoremstyle{definition}
\newtheorem{defn}[thm]{Definition}
\newtheorem{rem}[thm]{Remark}
\newtheorem{exmp}[thm]{Example}
\DeclareMathOperator{\freq}{freq}
\renewcommand{\phi}{\varphi}
\DeclareMathOperator{\M}{\mathcal{M}}
\DeclareMathOperator{\Ms}{\mathcal{M}_{\mathit \sigma} }
\DeclareMathOperator{\Mse}{\M_{\mathit \sigma}^e}
\newcommand{\FS}{\alf^\infty}
\newcommand{\BB}{\mathscr{B}}
\newcommand{\Bl}{\mathcal{B}}
\newcommand{\alf}{\mathscr{A}}
\DeclareMathOperator{\lang}{\Bl}
\DeclareMathOperator{\CL}{CL}
\DeclareMathOperator{\dirac}{\delta}
\newcommand{\dbar}{{\bar d}}
\newcommand{\dbarm}{\bar{d}_{\mathcal{M}}}
\newcommand{\Hdbar}{{\bar d}^H}
\newcommand{\Hdbarm}{\bar{d}^H_{\mathcal{M}}}
\newcommand{\dHam}{d_\textrm{Ham}}
\newcommand{\weakst}{weak$^*$}
\newcommand{\eps}{\varepsilon}
\newcommand{\ajoining}{\xi}
\newcommand{\emptyword}{\lambda}
\newcommand{\R}{\mathbb{R}}
\newcommand{\Z}{\mathbb{Z}}
\newcommand{\N}{\mathbb{N}}
\newcommand{\betn}{\beta^{(n)}}
\newcommand{\cB}{\mathcal{B}}
\author{Melih Emin Can \and Jakub Konieczny \and Michal Kupsa \and Dominik Kwietniak}
\address[M. E.\ Can]{Faculty of Mathematics and Computer Science, Jagiellonian University in Krakow, ul. \L o\-jasiewicza 6, 30-348 Krak\'ow, Poland}\email{melih.can@im.uj.edu.pl}
\address[D.\ Kwietniak]{Faculty of Mathematics and Computer Science, Jagiellonian University in Krakow, ul. \L o\-jasiewicza 6, 30-348 Krak\'ow, Poland}\email{dominik.kwietniak@uj.edu.pl}
\urladdr{www.im.uj.edu.pl/DominikKwietniak/}
\address[M.\ Kupsa]{ The Czech Academy of Sciences, Institute of Information Theory and Automation, Prague 8, CZ-18208}\email{kupsa@utia.cas.cz}
\address[J.\ Konieczny]
{Department of Computer Science, University of Oxford,
Wolfson Building, Parks Road, Oxford OX1 3QD, UK}
\email{jakub.konieczny@gmail.com}
\title[Minimal and proximal examples\ldots]{Minimal and proximal examples of $\dbar$-stable and $\dbar$-approachable shift spaces}
\date{\today}
\begin{document}

\begin{abstract}
We study shift spaces over a finite alphabet that can be approximated by mixing shifts of finite type in the sense of (pseudo)metrics connected to Ornstein's $\dbar$ metric ($\dbar$-approachable shift spaces). The class of $\dbar$-approachable shifts can be considered as a topological analog of measure-theoretical Bernoulli systems. The notion of $\dbar$-approachability together with a closely connected notion of $\dbar$-shadowing were introduced by Konieczny, Kupsa, and Kwietniak [in \emph{Ergodic Theory and Dynamical Systems}, vol.	\textbf{43} (2023), issue 3, pp. 943--970].  These notions were developed with the aim to significantly generalize specification properties. Indeed, many popular variants of the specification property, including the classic one  and almost/weak specification property ensure $\dbar$-approachability and $\dbar$-shadowing.
Here, we study further properties and connections between $\dbar$-shadowing and $\dbar$-approachability. We prove that $\dbar$-shadowing implies $\dbar$-stability (a notion recently introduced by Tim Austin).
We show that for surjective shift spaces with the $\dbar$-shadowing property the Hausdorff pseudodistance $\Hdbar$ between shift spaces induced by $\dbar$ is the same as the Hausdorff distance between their simiplices of invariant measures with respect to the Hausdorff distance induced by the Ornstein's metric $\dbar$ between measures. We prove that without $\dbar$-shadowing this need not to be true (it is known that the former distance always bounds the latter). We provide  examples illustrating these results including minimal examples and proximal examples of  shift spaces with the $\dbar$-shadowing property. The existence of such shift spaces was announced in our earlier paper [op. cit.]. It shows that $\dbar$-shadowing indeed generalises the specification property.
\end{abstract}
\subjclass[2010]{
37B05 (primary) 37A35, 37B10, 37B40, 37D20 (secondary)}
\keywords{specification property, topological entropy, shift space, Poulsen simplex, Besicovitch pseudometric}
\maketitle

Given a finite set (an \emph{alphabet}) $\alf$ we let $\FS$  stand for the \emph{full shift over $\alf$}, that is a set of all $\alf$-valued infinite sequences. To avoid trivialities, we assume that $\alf$ has at least two elements. We endow $\FS$ with the product topology induced by the discrete topology on $\alf$, which turns $\FS$ into a compact metrisable space. Let $\rho$ be a metric compatible with the topology on $\FS$.  The shift operator
$\sigma\colon\FS\to\FS$ turns $\FS$ into a noninvertible dynamical system. From the dynamical point of view, the most interesting objects are closed nonempty $\sigma$-invariant subsets of $\FS$ (\emph{one-sided shift spaces} or \emph{subshifts}). We also consider the space $\M(\FS)$ of all Borel probability measures on $\FS$ with the weak$^*$ topology. The set of $\sigma$-invariant measures in $\M(\FS)$ concentrated on a shift space $X\subseteq\FS$ is denoted by $\Ms(X)$.
Each of these objects (invariant measures and subshifts) has a canonically defined sequence of  Markov approximations converging to it in a natural topology.
This fact, however, is of little practical use, because the convergence is too weak to allow for a transfer of dynamical properties from an approximating sequence to the properties of its limit.

Recall that the natural topology on the space of all subshifts of $\FS$ is the hyperspace (Vietoris) topology of nonempty closed subsets of a compact metric space. In other words, a sequence of shift spaces $(X_n)_{n=1}^\infty\subseteq\FS$ converges to a shift space $X\subseteq \FS$ in the hyperspace topology if $\rho^H(X_n,X)\to 0$ as $n\to\infty$ (here, $\rho^H$ is the Hausdorff metric corresponding to $\rho$). Similarly, we say that simplices of invariant measures of shift spaces $(X_n)_{n=1}^\infty\subseteq\FS$ approximate the simplex of invariant measures of a shift space $X\subseteq \FS$ if $\Ms(X_n)$ converges to $\Ms(X)$ as $n\to \infty$  in the natural hyperspace topology of $\Ms(\FS)$, that is, if
$D^H(\Ms(X_n),\Ms(X))\to 0$ as $n\to\infty$, where $D^H$ is the Hausdorff metric corresponding to a metric $D$ compatible with the weak$^*$ topology on $\Ms(\FS)$.

Fortunately, for both measures and subshifts, stronger metrics than $\rho$ and $D$ are also available. A useful metric  for $\sigma$-invariant measures is Ornstein's  metric $\dbarm$. In \cite{KKK2} we studied a topology on the powerset of $\FS$ induced by the Hausdorff pseudometric $\Hdbar$ derived from $\dbar$-pseudometric on $\FS$. A very similar idea of using $\dbar$-approximation was independently considered by Dan Thompson \cite{Thompson-note}, who used it in the settings of \cite{CT}.

Recall that the pseudometric $\dbar$ is given for $x=(x_j)_{j=0}^\infty,\ y=(y_j)_{j=0}^\infty\in \FS$ by
\begin{equation}\label{def:dbar}
\dbar(x,y)=\limsup_{n\to\infty}\frac{1}{n}|\{0\le j <n : x_j\neq y_j\}|.
\end{equation}
Since $\dbar(x,y)$ can be zero for distinct $x$ and $y$, $\dbar$ is not a metric. Nevertheless, after factorizing by the equivalence relation $\sim$ on $\FS$, where $x\sim y$ if and only if $\dbar(x,y)=0$, we obtain the factor space $\FS/{\sim}$ on which $\dbar$ becomes a complete, non-separable metric. Since $\dbar$ is bounded by $1$ on $\FS$, it induces a Hausdorff pseudometric $\Hdbar$ on the space $\CL(\FS,\dbar)$ of all nonempty $\dbar$-closed subsets of $\FS$.
Similarly, $\dbarm$ is a complete bounded non-separable metric on $\Ms(\FS)$ inducing a Hausdorff metric $\Hdbarm$ on the space $\CL(\Ms(\FS),\dbarm)$ of all nonempty $\dbarm$-closed subsets of $\Ms(\FS)$. Note that $\dbarm$-convergence implies weak$^*$ convergence, so for each shift space $X\subseteq\FS$ the set $\Ms(X)$ is $\dbarm$-closed. It is also known that the set of ergodic measures on $X$, denoted by $\Mse(X)$, is $\dbarm$-closed.

Hence, we obtain two more ways to say that shift spaces $(X_n)_{n=1}^\infty\subseteq\FS$ approximate $X\subseteq \FS$: 
\begin{align}
  \lim_{n\to\infty}&\Hdbar(X_n,X)=0. \label{four-modes-4}\\
\lim_{n\to\infty}&\Hdbarm(\Ms(X_n),\Ms(X))=0,\label{four-modes-1}
\end{align}
Note that we do not assume that the approximation in \eqref{four-modes-4} and \eqref{four-modes-1} is monotone (meaning $X_1\supseteq X_2 \supseteq \ldots$ and $X=\bigcap X_n$), but in practice it is often the case.

In \cite{KKK2}, we studied the consequences of the existence of an approximating sequence as in \eqref{four-modes-4} and \eqref{four-modes-1}. We were especially interested in the case when the approximating sequence is the sequence of Markov approximations. We introduced \emph{$\dbar$-approachable} shift spaces (subshifts that are approached by their topological Markov approximations not only in the `usual' Hausdorff metric topology, but also in
the $\Hdbar$ sense). We also considered a condition that is ostensibly a relaxation of \eqref{four-modes-1}:
\begin{equation}
\lim_{n\to\infty}\Hdbarm(\Mse(X_n),\Mse(X))=0. \label{four-modes-2}
\end{equation}
We proved in \cite{KKK2} that for every shift spaces $X$ and $Y$ over $\alf$ we have
\begin{equation}\label{hdbar-ineq}
\Hdbarm(\Ms(X),\Ms(Y))=\Hdbarm(\Mse(X),\Mse(Y))\le \Hdbar(X,Y),
\end{equation}
hence
\[
\eqref{four-modes-4}
\implies \eqref{four-modes-1}\Longleftrightarrow\eqref{four-modes-2}.
\]
In other words, $\Hdbar$ approximation \eqref{four-modes-4} implies convergence of simplices of invariant measures in the Hausdorff metric $\Hdbarm$ induced by Ornstein's $\dbarm$ metric on the space $\Ms(\FS)$ as in \eqref{four-modes-1}. As a consequence, certain features of simplices of invariant measures of shift spaces in the approximating sequence are inherited by the simplex of the limit.

In analogy\footnote{Note that Ornstein's theory characterising Bernoullicity works best for invertible measure-preserving systems, in particular Markov shifts carrying Markov approximations mentioned here, are two-sided, while we consider one-sided setting.} with Friedman and Ornstein's result characterizing Bernoulli measures among all totally ergodic shift invariant measures as $\dbarm$-limits of their own Markov approximations (see \cite{FO70}), we also characterized in \cite{KKK2} chain mixing $\dbar$-approachable shift spaces using the newly introduced $\dbar$-shadowing property. In this way we obtained a large family of shift spaces that contains all $\beta$-shifts and all mixing sofic shifts, in particular all mixing shifts of finite type. This is because many specification properties imply chain mixing and $\dbar$-approachability (this is the case, for example, for all shift spaces with the almost specification property). We refer to \cite{KKK2} for more details.

We also showed in \cite{KKK2} that if every $X_n$ has entropy-dense set of ergodic measures and the sequence $(X_n)_{n=1}^\infty$ converges to $X$ in the sense defined by any of \eqref{four-modes-4}--\eqref{four-modes-2}, then ergodic measures of $X$ are also entropy-dense. This established a new method of proving entropy density. As a consequence we obtained entropy density of ergodic measures for all surjective shift spaces with the $\dbar$-shadowing property. Entropy density of ergodic measures is a property introduced by Orey in 1986 \cite{Orey} and F\"ollmer and Orey in 1988 \cite{FO}. Recall that  ergodic measures of a shift space $X$ are entropy-dense if every invariant measure can be approximated with an ergodic one with respect to the \weakst topology and entropy at the same time.
In particular, $\Mse(X)$ is a dense subset of $\Ms(X)$. Note that there are shift spaces with dense, but not entropy-dense, sets of ergodic measures (see \cite{GK}). Density of ergodic measures and entropy density are strongly related to the theory of large deviations and multifractal analysis \cite{Comman,EKW,PS,PS2}, see Comman's article \cite{Comman2017} and references therein for more information about that connection.

The results of \cite{KKK2} are also applicable to the study of dynamics of the so-called $\mathscr{B}$-free shifts (or systems), a subject that has recently attracted a considerable interest (see \cite{DKPS,DKKPL, KKLem, Keller, Keller-Studia, KKK, KPL, KPLW, KPLW2}.

In the present paper, we study further properties of $\dbar$-approachable shift spaces. In particular, in Section \ref{sec:examples} we construct minimal and proximal examples of chain mixing $\dbar$-approachable shift spaces. 
These examples demonstrate that our technique yields entropy density for shift spaces that are beyond the reach of methods based on specification, as specification excludes both proximality and minimality. So far only specification-like conditions were invoked to prove entropy density explicitly (see \cite{EKW,PS}). We note that there exists a general theorem due to Downarowicz and Serafin \cite{DS} that guarantees existence of minimal shifts with entropy-dense ergodic measures, 
 but due to its generality it is hard to see concrete examples.
We also prove, see Section \ref{sec:dbar-stab}, that $\dbar$-approachability implies $\dbar$-stability, where $\dbar$-stability is a property recently introduced by Tim Austin \cite{Austin}. Austin combined one of Ornstein’s conditions equivalent to Bernoullicity with equivariant analogs of some basic results in measure concentration to characterize Bernoullicity of the equilibrium measure of a continuous potential $\varphi$ under the assumption that the equilibrium is
unique. Austin formulated his main condition in terms of a stronger kind of differentiability of the
pressure functional at $\varphi$. He proved that the condition is always necessary and he showed that it is sufficient if the shift space is ‘$\dbar$-stable’.
Austin remarked that the class of ‘$\dbar$-stable’ subshifts includes the full shift and several others examples with the specification property. He also suspected that $\dbar$-stability
holds also for examples without any specification properties. Hence our minimal and proximal examples of $\dbar$-approachable shifts confirm this suspicion.
In Section \ref{sec:Hdabr-vs-Hdbarm} we also show that the implication \eqref{four-modes-1}$\implies$\eqref{four-modes-4} holds true if all shift spaces involved have the $\dbar$-shadowing property, hence chain mixing and $\dbar$-approachability suffice for this implication, see Theorem \ref{thm:dbarshad-dbarm-dbar}. We note that the implication \eqref{four-modes-1}$\implies$\eqref{four-modes-4} does not hold in general by producing a sequence of shift spaces $(X_n)_{n=1}^\infty$ such that for some shift space $X$  we have
$\Hdbarm(\Ms(X_n),\Ms(X))\to 0$ while $\Hdbar(X_n,X)\to 1$ as $n\to\infty$, see Proposition \ref{prop:diff-limits}.

We prove (Proposition \ref{prop:no_simplex}) that it is possible to find a sequence of shift spaces $(X_n)_{n=1}^\infty$ such that $\Ms(X_n)$ converges in $\dbarm$ to a singleton set, that is not a simplex of invariant measures for any shift space. Finally, we prove that the $\dbar$-shadowing on the measure center of a shift space (the smallest invariant subshift of full measure for every invariant measure) implies the same for the shift. We recall our notation and basic definitions in Section \ref{sec:definitions}.

The results of the present paper, as well as the results of \cite{KKK2} are also applicable to two-sided shift spaces (shift-invariant subsets of $\alf^{\Z}$), provided that the definition of the pseudometric $\dbar$ stays as in \eqref{def:dbar}, viz. we average over the coordinates $0,1,\ldots,n-1$. 
\section{Definitions}\label{sec:definitions}


\subsection{Hausdorff pseudometrics}\label{subsec:Hausdorff}
Let $Z$ be a set. 
A \emph{pseudometric} on $Z$ is a real-valued, nonnegative, symmetric function $\rho$ on $Z\times Z$ vanishing on the diagonal $\{(x,y)\in Z\times Z: x=y\}$ and satisfying the triangle inequality. Let $\rho$ be a bounded pseudometric on $Z$. For $z\in Z$ and nonempty $A,B\subseteq Z$, we define
\[\rho(z,B)=\inf_{b\in B}\rho(z,b),\quad\text{and}\quad \rho^H(A,B)=\max\left\{\sup_{a\in A}\rho(a,B),\ \sup_{b\in B}\rho(b,A)\right\}.\]
We call $\rho^H$ the \emph{Hausdorff pseudometric} induced by $\rho$ on the space of all nonempty subsets of $Z$. 
If $\rho$ is a bounded metric, then $\rho^H$ becomes a metric on the set $\CL(Z,\rho)$ of closed nonempty subsets of $(Z,\rho)$.
Note that in our settings some properties, well-known in the compact case, fail, because we consider $(Z,\rho)$ where $\rho$ is not necessarily compact, but only a bounded pseudometric space. For example, $\rho$ and another pseudometric $\tilde\rho$ may induce the same topology on $Z$ but the spaces $(\CL(Z),\rho^H)$ and $(\CL(Z),\tilde{\rho}^H)$ need not be homeomorphic.

\subsection{Shift spaces and languages}
We let $\N$ denote the set of positive integers. We also write $\N_0=\N\cup\{0\}$. Unless otherwise stated, the letters $i,j,k,l,m,n$ always denote integers. An \emph{alphabet} is a finite set $\alf$ endowed with the discrete topology. We refer to elements of $\alf$ as to \emph{symbols} or \emph{letters}. The \emph{full shift} $\FS$  is the Cartesian product 
of infinitely many copies of $\alf$ indexed by $\N_0$. We endow $\FS$ with the product topology. A compatible metric on $\FS$ is given for $x,y\in\FS$ by
\[
\rho(x,y)=\begin{cases}
            0, & \mbox{if } x=y, \\
            2^{-\min\{j:x_j\neq y_j\}}, & \mbox{otherwise}.
          \end{cases}
\]
The \emph{shift map} $\sigma\colon\alf^\infty\to\alf^\infty$ is given for $x=(x_i)_{i=0}^\infty\in\FS$ and $j\ge 0$ by $\sigma(x)_j=x_{j+1}$.
A \emph{shift space} over $\alf$ is a nonempty, closed, and $\sigma$-invariant subset of $\FS$.
A \emph{word}  over $\alf$ is a finite sequence of elements of $\alf$. The number of entries of a word $w$ is called the \emph{length} of $w$ and is denoted by $|w|$.
The empty sequence is called the \emph{empty word} and is the only word of length $0$. We denote it by $\emptyword$.
The \emph{concatenation} of words $u=u_1\ldots u_k$ and $v=v_1\ldots v_m$ is the word $u_1\ldots u_k v_1\ldots v_m$ denoted simply as $uv$. Given $x \in \FS$ and $0\le i<j$ we let $x_{[i,j)}$ denote the word $x_ix_{i+1}\ldots x_{j-1}$ over $\alf$ of length $j-i$. 
We say that a word $w$ \emph{appears} in $x\in \FS$ if there exist  $0\le i<j$ such that
$w=x_{[i,j)}$. A word $w$ \emph{appears} in a shift space $X \subseteq \FS$ if there exists $x\in X$ such that $w$ appears in $x$. The \emph{language} of a shift space $X\subseteq\FS$ is the set $\lang(X)$ of all finite words over $\alf$ appearing in $X$. We agree that the empty word appears in every sequence in $\FS$. For $n\in\N_0$, we let $\lang_n(X) \subseteq \alf^n$ to be the set of all words $w\in\lang(X)$ with $|w|=n$. Given a set $\mathscr{F}$ of finite words over $\alf$ we define $X_{\mathscr{F}}$ to be the set of all $x=(x_i)_{i=0}^\infty\in\FS$ such that no word from $\mathscr{F}$ appears in $x$.
The resulting set $X_{\mathscr{F}}$ is either empty or a shift space. Furthermore, for every shift space $X$ over $\alf$ one can find a collection $\mathscr{F}$ of finite words such that $X=X_\mathscr{F}$. A shift space $X$ is a \emph{shift of finite type} if there exists a finite set $\mathscr{F}$ such that $X=X_{\mathscr{F}}$. Every shift space $X\subseteq\FS$ is the intersection of a sequence $(X^M_n)_{n\ge 0}$ of shifts of finite type. To construct that sequence, we define $\mathscr{F}[n]$ to consists of all words $w$ over $\alf$ with $|w|= n+1$ and $w\notin\lang_{n+1}(X)$.
In this way, we obtain for each $n\ge 0$ a shift of finite type $X^M_n=X_{\mathscr{F}[n]}$ such that $\lang_{j}(X)=\lang_j(X^M_n)$ for $0\le j\le n+1$. We call the shift space $X^M_n$ the $n$-th \emph{(topological) Markov approximation} of $X$ or \emph{finite type approximation of order $n$ to $X$}. We note that for every shift space $X$, its  Markov approximation $X^M_n$ can be conveniently described using a Rauzy graph.  The \emph{$n$-th Rauzy graph} of $X$ is a labeled graph $G_n=(V_n,E_n,\tau_n)$,
where we set $V_n=\lang_n(X)$ and $E_n=\lang_{n+1}(X)$, and
for each $w=w_0w_1\ldots w_{n}\in E_n$ we define $i(w)=w_0\ldots w_{n-1}\in V_n$,  $t(w)=w_1\ldots w_{n}\in V_n$, and $\tau_n(w)=w_0\in\alf$. The sofic shift space $X_n$ presented by $G_n$ satisfies $\lang_{j}(X_n)=\lang_{j}(X)$ for $j=1,\ldots,n+1$, see Proposition 3.62 in \cite{Kurka}. It is now easy to see that $X_n$ is the $n$-th  topological Markov approximations for $X$.

The following definitions of (chain) mixing and (chain) transitivity are stated only for shift spaces. We will do the same for several notions: instead of presenting a general definition for continuous maps acting on compact metric spaces (for the latter, see \cite{Kurka}), we will state an equivalent definitions adapted to symbolic dynamics. This applies to (chain) transitivity, (chain) mixing, specification and its variants.

A shift space $X$ is \emph{transitive} if for every $u,w\in\lang(X)$ there exists $v$ with $uvw\in\lang(X)$. A shift space $X$ is \emph{topologically mixing} if for any $u,w\in\lang(X)$ there exists $N\in\N_0$ such that for each $n\ge N$ there is  $v=v(n)\in\lang_n(X)$ such that $uvw\in\lang(X)$.

A shift space is \emph{chain transitive} (respectively, \emph{chain mixing})
if its topological Markov approximations $X^M_n$ are transitive (respectively, topologically mixing) for all except finitely many $n$'s.


\subsection{Ergodic properties of shift spaces}
Let $\M(X)$ be the set of all Borel probability measures supported on a shift space $X\subseteq\FS$. In particular, $\M(\FS)$ stands for the space of all Borel probability measures on $\FS$. We write $\Ms(X)$ and $\Mse(X)$ to denote the sets of, respectively, $\sigma$-invariant and ergodic $\sigma$-invariant measures in $\M(X)$. We endow $\M(\FS)$ with the \weakst\ topology, hence it becomes a compact metrisable space and $\Ms(X)$ is its closed subset for every shift space $X\subseteq\FS$.

We say that $x\in \FS$ \emph{generates}  $\mu\in\Ms(X)$ along a strictly increasing sequence of integers $(N_k)_{k=1}^\infty$,  if for every continuous function $f\colon X\to\R$ the sequence of Ces\`aro averages of $(f(\sigma^n(x)))_{n=0}^\infty$ along $(N_k)_{k=1}^\infty$ converges and the limit satisfies
\[
\lim_{k\to\infty}\frac{1}{N_k}\sum_{n=0}^{N_k-1}f(\sigma^n(x))=\int_{\FS}f\,\text{d}\mu.
\]
Compactness implies that for every strictly increasing sequence of integers $(N_k)_{k=1}^\infty$ and every point $x\in X$ there is a subsequence of $(N_k)_{k=1}^\infty$ such that $x$ generates an invariant measure along that subsequence.
In particular, every point always generates at least one measure 
A point  $x\in \FS$ is \emph{generic} for $\mu\in\Ms(X)$,  if $\mu$ is the unique measure generated by $x$. 
Every ergodic measure has a generic point.
By $h(\mu)$ we denote the Kolmogorov-Sinai entropy of $\mu\in\Ms(X)$.  We say that ergodic measures of a shift space $X$ are \emph{entropy dense} if
	for every measure $\mu\in \Ms(X)$, every neighborhood $U$ of $\mu$ in $\Ms(X)$ and every $\eps>0$ there is $\nu\in U\cap\Mse(X)$ with
	$|h(\nu)-h(\mu)|<\eps$.
Note that having entropy-dense ergodic measures is preserved by conjugacy. 

\subsection{The functions $\dbar$ and $\dbarm$}
Given $x=(x_n)_{n=0}^\infty,y=(y_n)_{n=0}^\infty\in\FS$ we set
\begin{gather*}
  \dbar(x,y)=\limsup_{n\to\infty}\frac{1}{n}|\{0\le j < n:x_j\neq y_j\}|. 
\end{gather*}
The function 
$\dbar$ is a pseudometric on $\FS$, but $\dbar$ is not a metric if $\alf$ has at least two elements, because the implication $\dbar(x,y)=0\implies x=y$ fails. The function $\dbar$ 
is not continuous in general.
Furthermore, $\dbar\colon\FS\times\FS\to [0,1]$ is shift invariant (for all $x,y\in\FS$ we have 
$\dbar(x,y)=\dbar(\sigma(x),\sigma(y))$).

Ornstein's metric $\dbarm$ on $\Ms(\FS)$ is usually\footnote{Ornstein's metric $\dbarm$ is usually denoted by $\dbar$, but in \cite{KKK2} as well as in this paper the distinction between $\dbar$ and $\dbarm$ is crucial.  We refer to $\dbarm$ as the `d-bar distance for measures' and we call $\dbar$ on  $\FS$ `pointwise d-bar' or simply `d-bar'.} defined with the help of joinings.
A $\sigma\times\sigma$-invariant measure $\ajoining$ on $\FS\times\FS$ is a \emph{joining} of $\mu,\nu\in\Ms(\FS)$ if $\mu$ and $\nu$ are the marginal measures for $\ajoining$ under the projection to the first, respectively the second, coordinate. We write $J(\mu,\nu)$ for the set of all joinings of $\mu$ and $\nu$. Note that $J(\mu,\nu)$ is always nonempty because the product measure $\mu\times\nu$ belongs to $J(\mu,\nu)$. Ornstein's metric $\dbarm$ on $\Ms(\FS)$ is given by:
\begin{align*}
  \dbarm(\mu,\nu)=\inf_{\ajoining\in J(\mu,\nu)}\int_{\FS\times\FS} d_0(x,y) \dif
   \ajoining(x,y),
\end{align*}
where $d_0(x,y)=1$ if $x_0\neq y_0$ and $d_0(x,y)=0$ otherwise.  The space $\Ms(\FS)$ endowed with $\dbarm$-metric becomes a complete but non-separable (hence, non-compact) metric space.
The space $\Mse(\FS)\subseteq\Ms(\FS)$ of ergodic measures is $\dbarm$-closed, as are the spaces of strongly mixing and Bernoulli measures on $\FS$.
Entropy function $\mu\mapsto h(\mu)$ is continuous on $\Ms(\FS)$ under $\dbarm$. The convergence in $\dbarm$ implies weak$^*$ convergence (for more details, see \cite{Ornstein}).

Applying Hausdorff metric construction described in Section \ref{subsec:Hausdorff} to the bounded metric space $(\Ms(\FS),\dbarm)$ we obtain a metric denoted by $\Hdbarm$ defined  on the space $\CL(\Ms(\FS),\dbarm)$ of nonempty closed subsets of $(\Ms(\FS),\dbarm)$. For every shift space $X$ the sets $\Ms(X)$ and $\Mse(X)$ are closed sets in the Hausdorff metric $\Hdbarm$. Similarly, starting from the pseudometric space  $(\FS,\dbar)$ 
we get a pseudometric $\Hdbar$ 
on the set of all nonempty subsets of $\FS$.

\subsection{
On $\dbar$-shadowing and $\dbar$-approachability}
A shift space $X \subseteq \FS$ is \emph{$\dbar$-ap\-proach\-able}
if its Markov approximations $X^M_1,X^M_2,\ldots$ satisfy $\Hdbar(X^M_n,X)\to 0$ as $n\to\infty$. Every shift of finite type is $\dbar$-approachable.

We say that shift space $X$ has the \emph{$\dbar$-shadowing property} if for every $\eps>0$ there is $N\in\N$ such that for every sequence $(w^{(j)})_{j=1}^\infty$
in $\lang(X)$ with $|w^{(j)}|\ge N$ for $j=1,2,\ldots$ is \emph{$\eps$-traced} by some point $x'\in X$, that is, there is $x'\in X$ such that $\dbar(x,x')<\eps$, where $x=w^{(1)}w^{(2)}w^{(3)}\ldots$. The $\dbar$-shadowing property was introduced in \cite{KKK2}. It is closely related to the \emph{average shadowing property} introduced by Blank \cite{Blank} and studied in \cite{KKO}. Every mixing sofic shift space has the $\dbar$-shadowing property and $\dbar$-shadowing is inherited by $\Hdbar$-limits of shift spaces with the $\dbar$-shadowing. Note that $\dbar$-shadowing implies $\dbar$-approachability, but to prove the converse we need to assume additionally that the shift space in question is chain mixing. The exact statement is Theorem 6 in \cite{KKK2}, which says that
a shift space $X\subseteq\FS$ is chain mixing and $\dbar$-approachable if and only if $\sigma(X)=X$ and $X$ has the $\dbar$-shadowing property. Theorem 6 in \cite{KKK2} lists a third condition, but we won't need it until Section \ref{sec:examples}, so we postpone the exact statement.

As we have already mentioned in the introduction, this characterization is a topological counterpart of the result saying that a totally ergodic shift invariant probability measure is Bernoulli if and only if it is the $\dbarm$-limit of the sequence of its canonical Markov approximations.

In Section \ref{proximal},  we also show how to apply this corollary even in the case when natural approximations of our shift are not comparable via inclusion, hence does not form a descending chain of shift spaces.

\section{$\dbar$-approachability vs. $\dbar$-stability} \label{sec:dbar-stab}

We recall the notion of $\dbarm$-stability that has been recently introduced by Tim Austin \cite{Austin}. 
Then we show that it follows from the $\dbar$-shadowing property. Later we discuss some further properties of $\dbarm$-stable shifts.
\begin{defn}
A shift space $X \subseteq \FS$ is \emph{$\dbarm$-stable} if for every $\eps>0$ there is an open neighborhood $\mathscr{U}$ of $\Ms(X)$ in the weak$^*$ topology on $\Ms(\FS)$ such that
if $\nu\in\mathscr{U}$, then there is $\mu\in\Ms(X)$ with $\dbarm(\mu,\nu)<\eps$.
\end{defn}

Note that we use slightly different notation: Austin writes $\dbar$ instead of $\dbarm$.
Equivalently, a shift space $X$ is $\dbarm$-stable if  any shift-invariant measure which lives close enough
to $X$ in the weak$^*$ topology is actually close in Ornstein's $\dbarm$ metric on $\Ms(\alf^\infty)$ to a
shift-invariant measure supported on $X$. This observation (noted already in \cite{Austin}) is formulated as the next lemma for future reference.
We state it in terms of the natural basis $(U_n)_{n=1}^\infty$ of  open neighborhoods of a shift space $X$ with respect to the Hausdorff topology induced by the usual (product) topology on $\FS$ (the topology of the Hausdorff metric $\rho^H$), where for $n\ge 1$ we have
\[U_n(X)=\bigcup \{[u]\mid u\in \lang_n(X)\}.\]


\begin{lem}\label{lem:equiv_cond_stability_shift}
  A shift space $X \subseteq \FS$ is $\dbarm$-stable if, and only if, for any $\eps > 0$ there are $\delta > 0$ and $N\in\N$ such that
  $\dbarm(\nu,\Ms(X))<\eps$ whenever $\nu\in\Ms(\FS)$,  $n\ge N$, and $\nu(U_n(X))>1-\delta$.
\end{lem}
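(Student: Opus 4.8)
The plan is to prove both implications by translating between the two ways of saying that an invariant measure ``lives close to $X$'': being in a \weakst-neighborhood $\mathscr{U}$ of the compact set $\Ms(X)$, and assigning mass close to $1$ to a cylinder-neighborhood $U_n(X)$ of $X$ itself. The key point is that these two notions of ``closeness to $X$'' generate, in an appropriate sense, the same filter of neighborhoods of $\Ms(X)$ inside $\Ms(\FS)$; once this is established, the two formulations of $\dbarm$-stability are immediate rephrasings of each other.

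\begin{proof}
Suppose first that $X$ is $\dbarm$-stable and fix $\eps>0$. Let $\mathscr{U}$ be the \weakst-open neighborhood of $\Ms(X)$ furnished by the definition, so that every $\nu\in\mathscr{U}$ satisfies $\dbarm(\nu,\Ms(X))<\eps$. It suffices to find $\delta>0$ and $N\in\N$ such that $\{\nu\in\Ms(\FS) : \nu(U_n(X))>1-\delta,\ n\ge N\}\subseteq\mathscr{U}$. Equivalently, we must show that the sets $\mathscr{V}_{N,\delta}=\{\nu\in\Ms(\FS):\nu(U_N(X))>1-\delta\}$ form a neighborhood basis of $\Ms(X)$ in $\Ms(\FS)$ in the \weakst\ topology. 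Since each $U_N(X)$ is clopen (a finite union of cylinders), the map $\nu\mapsto\nu(U_N(X))$ is \weakst-continuous, so each $\mathscr{V}_{N,\delta}$ is open; moreover $\Ms(X)\subseteq\mathscr{V}_{N,\delta}$ because any $\mu\in\Ms(X)$ is supported on $X\subseteq U_N(X)$ and hence $\mu(U_N(X))=1$. For the basis property, suppose toward a contradiction that no $\mathscr{V}_{N,\delta}$ is contained in $\mathscr{U}$; then for each $N$ we may pick $\nu_N\in\mathscr{V}_{N,1/N}\setminus\mathscr{U}$. By compactness of $\Ms(\FS)$, pass to a subsequence with $\nu_{N_k}\to\nu$ \weakst. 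Each $U_N(X)$ is clopen and $U_1(X)\supseteq U_2(X)\supseteq\cdots$ with $\bigcap_N U_N(X)=X$ (as $X$ is closed), so for fixed $M$ and all large $k$ we have $\nu_{N_k}(U_M(X))\ge\nu_{N_k}(U_{N_k}(X))>1-1/N_k$, whence $\nu(U_M(X))\ge\limsup_k\nu_{N_k}(U_M(X))=1$ by the portmanteau theorem applied to the clopen set $U_M(X)$. As this holds for all $M$, $\nu(X)=\nu(\bigcap_M U_M(X))=1$, so $\nu\in\Ms(X)\subseteq\mathscr{U}$; since $\mathscr{U}$ is open this contradicts $\nu_{N_k}\notin\mathscr{U}$. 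This proves the forward implication.

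Conversely, assume the condition with $\delta$ and $N$; fix $\eps>0$ and obtain the corresponding $\delta>0$, $N\in\N$. Set $\mathscr{U}=\mathscr{V}_{N,\delta}=\{\nu\in\Ms(\FS):\nu(U_N(X))>1-\delta\}$, which we just observed is \weakst-open and contains $\Ms(X)$. For any $\nu\in\mathscr{U}$ we have $\nu(U_n(X))\ge\nu(U_N(X))>1-\delta$ for every $n\ge N$ by the nesting of the $U_n(X)$, so the hypothesis gives $\dbarm(\nu,\Ms(X))<\eps$, i.e.\ there is $\mu\in\Ms(X)$ with $\dbarm(\mu,\nu)<\eps$. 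Hence $X$ is $\dbarm$-stable.
\end{proof}

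The only genuinely substantive point is the compactness argument in the first paragraph showing that the clopen cylinder-neighborhoods $U_n(X)$ of $X$ do provide a neighborhood basis of $\Ms(X)$ inside $\Ms(\FS)$ for the \weakst\ topology; everything else is bookkeeping. I expect the main obstacle to be making sure the portmanteau direction is applied correctly: since the $U_n(X)$ are clopen we get an exact limit $\nu(U_M(X)) = \lim_k \nu_{N_k}(U_M(X))$, which is what makes the contradiction clean, and one should state this carefully rather than appealing vaguely to ``weak$^*$ convergence implies convergence on nice sets''.
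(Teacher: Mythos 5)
Your proof is correct, and it supplies an argument that the paper itself omits: the lemma is stated there without proof, as an observation (attributed to Austin) that ``follows directly from the definitions''. Your route --- showing that the clopen sets $U_n(X)$ generate a \weakst\ neighborhood basis of $\Ms(X)$ in $\Ms(\FS)$ via a compactness/portmanteau argument, with the identification $\bigcap_n U_n(X)=X$ --- is exactly the standard way to make that observation precise, so there is nothing to contrast with the paper's (nonexistent) proof.

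One small slip in the converse direction: since $U_1(X)\supseteq U_2(X)\supseteq\cdots$, for $n\ge N$ one has $\nu(U_n(X))\le\nu(U_N(X))$, not ``$\ge$'' as you wrote; the monotonicity goes the wrong way for the claim that $\nu\in\mathscr{V}_{N,\delta}$ forces $\nu(U_n(X))>1-\delta$ for \emph{every} $n\ge N$. This does not damage the argument, because the hypothesis of the lemma (as it is used throughout the paper) only requires a single $n\ge N$ with $\nu(U_n(X))>1-\delta$, and $n=N$ serves. You should simply invoke the hypothesis with $n=N$ and delete the erroneous inequality.
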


\begin{prop}\label{prop:dbar-shadowing-stab}
If a shift space $X\subseteq\FS$ has the $\dbar$-shadowing property, then $X$ is $\dbarm$-stable.
\end{prop}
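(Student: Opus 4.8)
The plan is to verify the criterion of Lemma \ref{lem:equiv_cond_stability_shift}: given $\eps>0$, I must produce $\delta>0$ and $N\in\N$ so that any $\nu\in\Ms(\FS)$ with $\nu(U_n(X))>1-\delta$ for some $n\ge N$ satisfies $\dbarm(\nu,\Ms(X))<\eps$. The $\dbar$-shadowing property supplies, for a suitably small $\eps_0$ (say $\eps_0=\eps/4$), a length $N_0\in\N$ such that any concatenation of blocks from $\lang(X)$ each of length $\ge N_0$ is $\eps_0$-traced by a point of $X$ in $\dbar$. The strategy is then: take $\nu$ almost supported on $U_n(X)$ for large $n$, produce a $\nu$-generic point (or work directly with $\nu$ via the ergodic decomposition), chop a typical orbit segment into blocks that, after discarding a small fraction of ``bad'' coordinates, lie in $\lang(X)$ and have length $\ge N_0$; then $\dbar$-shadow that concatenation by a point $x'\in X$; finally push this down to the level of measures to get $\mu\in\Ms(X)$ with $\dbarm(\mu,\nu)<\eps$.

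In more detail, I would first reduce to ergodic $\nu$: the set $\{\nu:\dbarm(\nu,\Ms(X))<\eps\}$ is, by the joining definition of $\dbarm$ and convexity, closed under taking barycenters of measures all of which lie in it, and the condition $\nu(U_n(X))>1-\delta$ passes to almost every ergodic component; so it suffices to treat ergodic $\nu$. For ergodic $\nu$, fix a generic point $z\in\FS$ for $\nu$. Since $\nu(U_n(X))>1-\delta$, the frequency with which $\sigma^k(z)\in U_n(X)$ — equivalently, $z_{[k,k+n)}\in\lang_n(X)$ — is $>1-\delta$. I would use a Rokhlin-type / interval-partition argument: cover $\N_0$ (asymptotically) by disjoint intervals of length $n$ whose left endpoints $k$ have $z_{[k,k+n)}\in\lang_n(X)$, leaving uncovered a set of density $O(\delta)$ (this is a standard packing argument, choosing $\delta$ small and $n\ge N:=N_0$ large). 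Replace the uncovered coordinates by an arbitrary fixed letter and correct them to lie inside words of $\lang(X)$; the resulting point $x$ is a concatenation $w^{(1)}w^{(2)}\cdots$ of words in $\lang(X)$ each of length $\ge N_0$, and by construction $\dbar(z,x)=O(\delta)$. Apply $\dbar$-shadowing to get $x'\in X$ with $\dbar(x,x')<\eps_0$, so $\dbar(z,x')<\eps_0+O(\delta)$.

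To finish I would pass from the pointwise estimate $\dbar(z,x')<\eps/2$ (say, after absorbing the $O(\delta)$ term into $\eps/4$) to a measure estimate. Let $\mu\in\Ms(X)$ be any measure generated by $x'$ along a subsequence $(N_k)$ along which $z$ also generates $\nu$ (possible by passing to a further subsequence, since $z$ is generic for $\nu$ it generates $\nu$ along every subsequence). The empirical joinings $\xi_{N_k}=\frac1{N_k}\sum_{j=0}^{N_k-1}\delta_{(\sigma^j z,\sigma^j x')}$ have marginals converging to $\nu$ and $\mu$; a weak$^*$ limit $\xi$ is a joining of $\nu$ and $\mu$, and $\int d_0\,d\xi\le\limsup\frac1{N_k}|\{j<N_k:z_j\ne x'_j\}|=\dbar(z,x')<\eps/2$ (using that $d_0$ is continuous, equivalently using the liminf/limsup behaviour of the averaged Hamming distance along a generating subsequence). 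Hence $\dbarm(\mu,\nu)\le\int d_0\,d\xi<\eps$, as required. The main obstacle, and the step needing the most care, is the combinatorial surgery in the middle paragraph: turning ``$\nu$ is almost supported on cylinders over $\lang_n(X)$'' into ``a generic point of $\nu$ is, up to density $\eps$, a concatenation of $\lang(X)$-blocks of length $\ge N_0$'', i.e.\ choosing the interval packing and the repair of the uncovered coordinates so that the modified point genuinely decomposes into admissible words without blowing up the $\dbar$-distance; one must also be slightly careful that the final block-lengths are all $\ge N_0$ (merge short leftover pieces with a neighbour, or simply absorb them into the bad set). The passage between pointwise $\dbar$ and $\dbarm$ via empirical joinings is routine but should be stated, since $\dbar$ is only upper semicontinuous and the limsup in its definition must be handled along the correct generating subsequence.
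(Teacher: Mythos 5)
Your overall strategy is the same as the paper's: take a generic point for $\nu$, greedily pack its orbit into length-$n$ blocks from $\lang_n(X)$, repair the low-density set of bad coordinates so that the result is a genuine concatenation of admissible words of length at least $n$, apply $\dbar$-shadowing, and transfer the pointwise $\dbar$ bound to $\dbarm$ via empirical joinings (the paper cites this last step as a standard fact from Glasner's book). Two points in your write-up deserve attention.

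First, your reduction to ergodic $\nu$ rests on the claim that ``the condition $\nu(U_n(X))>1-\delta$ passes to almost every ergodic component.'' This is false as stated: if $\nu=(1-\delta/2)\nu_1+(\delta/2)\nu_2$ with $\nu_1(U_n(X))=1$ and $\nu_2(U_n(X))=0$, then $\nu(U_n(X))>1-\delta$ while a set of ergodic components of positive mass misses $U_n(X)$ entirely. The reduction can be repaired (by Markov's inequality the components $\nu_\omega$ with $\nu_\omega(U_n(X))\le 1-\sqrt{\delta}$ carry total mass at most $\sqrt{\delta}$, and since $\dbarm\le 1$ their contribution to an integrated joining is negligible), but the paper sidesteps the issue entirely: since $\FS$ is the full shift, \emph{every} invariant measure on it, ergodic or not, admits a generic point, and the whole argument is run on that single point. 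Second, your repair step ``replace the uncovered coordinates by an arbitrary fixed letter and correct them to lie inside words of $\lang(X)$'' is exactly the place where the surgery can fail; the clean way to do it, as in the paper, is not to insert letters but to extend each good word $y_{[m_k,m_k+n)}\in\lang_n(X)$ to the right inside $\lang(X)$ until the next good starting position $m_{k+1}$, which automatically yields a concatenation of admissible words of length at least $n$ and changes $y$ only on the bad set. With these two adjustments your argument goes through and essentially coincides with the paper's proof.
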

\begin{proof}
  Fix $1>\eps>0$. Use the definition of the $\dbar$-shadowing property to pick $n$ such that every sequence $(w^{(j)})_{j=1}^\infty$
in $\lang(X)$ with $|w^{(j)}|\ge n$ for every $j\ge 1$ is $\eps/2$-traced in the $\dbar$ pseudometric by some point in $X$. Let $\nu$ be a shift-invariant measure on $\FS$ such that $\nu(U_n(X))>1-\eps/2$. We would like to show that $\dbarm(\nu,\Ms(X))<\eps$.

Let $y=(y_i)_{i=0}^\infty\in\alf^\infty$
be a generic point for $\nu$ (such a point always exists in $\FS$). Hence, the frequency of visits of $y$ to in $U_n(X)$ satisfies
\[
\lim_{N\to\infty}\frac1N\{0\le k<N : \sigma^k(y)\in U_n(X) \}=\nu(U_n(X))> 1-\eps/2.
\]
Define $m_0\ge 0$ as the smallest $\ell\ge 0$ such that $y_{[\ell,\ell+n)}\in\lang_n(X)$. Inductively, given $m_k$ for some $k\ge 0$, we define  $m_{k+1}$ as the smallest integer $\ell\ge m_k+n$ such that $y_{[\ell,\ell+n)}\in\lang_n(X)$. Then the set
$$M=\N_0\setminus\bigcup_{k=0}^\infty [m_k,m_{k+1})$$
is contained in the set
$$\{\ell\in\N_0 \mid y_{[\ell,\ell+n)}\not\in\lang_n(X)\},$$
so its upper density is less than $\eps/2$. For $k\in\N_0$, we extend the word $y_{[m_k,m_{k}+n)}\in\lang_n(X)$ to the right to form some word $v^{(k)}\in \lang(X)$ of length $m_{k+1}-m_k$. Then the sequence
$$z=y_{[0,m_0)}v^{(0)}v^{(1)}\ldots$$
differs from $y$ only at positions (indices) belonging to the set  $M$. Hence $\dbar(y,z)\le\eps/2$. The same is true for $y'=\sigma^{m_0}(y)$ and $z'=\sigma^{m_0}(z)$. Furthermore, $y'$ is still generic for $\nu$. By the $\dbar$-shadowing property,
$$z'=v^{(0)}v^{(1)}\ldots$$
can be approximated by $z''\in X$ such that $\dbar(z',z'')<\eps/2$. Therefore,
$$\dbar(y',z'')<\eps/2+\eps/2=\eps.$$

It is a standard fact (see \cite[Thm. 15.23]{Glasner}) that every measure generated by $z''$ is $\eps$-close with respect to the $\dbarm$ distance to the measure generated by $y'$. Hence
$\dbarm(\nu,\Ms(X))\le \eps$.
\end{proof}



Shift spaces with the specification property are primary examples of $\dbar$-stable shift spaces provided by \cite{Austin}.
Recall that  a shift space $X\subset \FS$ has the \emph{specification property} if there exists $k\in\N$ such that for any $u,w\in\lang(X)$ there is $v$ with $|v|=k$ such that $uvw\in\lang(X)$. The specification property is a very useful property with many consequences for a shift space. For more extensive overview on the specification property and its relatives we refer the reader to \cite{KLO}. Here we note that the specification property and even either one of its two weaker, incommensurable variants known as \emph{almost specification property} or \emph{weak specification property}, see \cite{KLO},\footnote{The nomenclature is not fixed, we follow \cite{KLO}. Recall that a \emph{mistake function} is a nondecreasing function $g\colon\N\to\N$ with $g(n)/n\to 0$ as $n\to\infty$. We say that a shift space $X$ has the \emph{almost specification property} if there is a mistake function $g\colon\N\to\N$ such that for any $u,w\in\lang(X)$ there is a word $v\in\lang(X)$ satisfying $v=u'w'$, where $|u|=|u'|$, $|v|=|v'|$, and the following inequalities hold: \begin{align*}
|\{1\le j\le n: u_j\neq u'_j\}|&\le g(|u|),\\ 
|\{1\le j\le n: w_j\neq w'_j\}|&\le g(|w|). 
\end{align*}
We say that a shift space $X$ has the \emph{weak specification property} if there is a mistake function $g$ such that for any $u,w\in\lang(X)$ there is $v$ with $uvw\in\lang(X)$ satisfying $|v|=g(|w|)$. Weak and almost specification properties are independent of each other: neither one implies the other, see \cite{KOR}. Additionally, in contrast with the classical specification property, the weaker versions do not imply the uniqueness of the measure of maximal entropy, see \cite{KOR,Pavlov}.
}
imply $\dbar$-approachability and chain-mixing, hence $\dbar$-shadowing (\cite{KKK2}). Therefore we obtain the following corollary.

\begin{cor}\label{cor:specification-prop-imp-stability}
If $X$ is a shift space satisfying one of the following conditions
\begin{enumerate}
  \item $X$ has the specification property,
  \item $X$ has the weak specification property,
  \item $X$ has the almost specification property,
\end{enumerate}
then $X$ is $\dbarm$-stable.
\end{cor}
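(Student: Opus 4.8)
The statement to prove is Corollary~\ref{cor:specification-prop-imp-stability}, which asserts that specification, weak specification, or almost specification each imply $\dbarm$-stability.

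\textbf{Plan of proof.} The plan is to deduce this immediately from Proposition~\ref{prop:dbar-shadowing-stab} together with the already-recalled facts linking the specification-type properties to $\dbar$-shadowing. By Proposition~\ref{prop:dbar-shadowing-stab}, it suffices to show that under any of the three hypotheses the shift space $X$ has the $\dbar$-shadowing property. So the whole argument reduces to assembling known implications: (i) each of the three properties implies chain mixing, (ii) each implies $\dbar$-approachability, and (iii) chain mixing together with $\dbar$-approachability is equivalent (for surjective $X$, and all three properties force $\sigma(X)=X$) to the $\dbar$-shadowing property, via Theorem~6 of \cite{KKK2} as quoted in the excerpt.

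\textbf{Key steps in order.} First I would note that each of the specification, weak specification, and almost specification properties forces $\sigma(X)=X$: given $x\in X$, the property lets one prepend a letter to (a small modification of) arbitrarily long prefixes of $x$ and pass to a limit to produce a preimage. Second, I would invoke that each of the three properties implies the Markov approximations $X^M_n$ are topologically mixing for all large $n$ (chain mixing) — this is standard, and is exactly what is recalled just before the corollary. Third, I would cite \cite{KKK2} for the fact that each of the three properties implies $\dbar$-approachability; this too is stated verbatim in the paragraph immediately preceding the corollary ("imply $\dbar$-approachability and chain-mixing, hence $\dbar$-shadowing (\cite{KKK2})"). Fourth, combining chain mixing, $\dbar$-approachability, and $\sigma(X)=X$ with Theorem~6 of \cite{KKK2} yields the $\dbar$-shadowing property. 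Fifth and last, apply Proposition~\ref{prop:dbar-shadowing-stab} to conclude $X$ is $\dbarm$-stable.

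\textbf{Main obstacle.} There is essentially no obstacle: the corollary is a formal consequence of Proposition~\ref{prop:dbar-shadowing-stab} and results imported from \cite{KKK2}, and indeed the text preceding it already spells out the chain of implications. The only point requiring a moment's care is making sure all three variants genuinely yield both chain mixing and $\dbar$-approachability (the two weaker variants are incommensurable, so one must check each separately rather than reducing one to another), but this is handled by \cite{KKK2} and needs only to be cited. Accordingly the proof is a two-line deduction, and I would present it as such rather than reproving the imported facts.

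\begin{proof}
By Proposition~\ref{prop:dbar-shadowing-stab} it suffices to verify that $X$ has the $\dbar$-shadowing property in each of the three cases. Each of the specification, weak specification, and almost specification properties implies $\sigma(X)=X$ and, as recalled above, implies that $X$ is chain mixing and $\dbar$-approachable \cite{KKK2}. By Theorem~6 of \cite{KKK2}, a surjective chain mixing $\dbar$-approachable shift space has the $\dbar$-shadowing property. Hence $X$ has the $\dbar$-shadowing property, and Proposition~\ref{prop:dbar-shadowing-stab} gives that $X$ is $\dbarm$-stable.
\end{proof}
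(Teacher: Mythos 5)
Your proposal is correct and follows exactly the paper's route: the paper derives this corollary from the observation (quoted just before the statement) that all three specification variants imply chain mixing and $\dbar$-approachability, hence $\dbar$-shadowing by Theorem~6 of \cite{KKK2}, and then applies Proposition~\ref{prop:dbar-shadowing-stab}. Your added remark that surjectivity must be checked is a reasonable bit of extra care, but it is subsumed in the cited implication from \cite{KKK2} and does not change the argument.
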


\begin{rem}
Using Proposition \ref{prop:dbar-shadowing-stab} we see that the proximal shift space constructed in Example \ref{ex:proximal} and the minimal shift space from Section \ref{sec:minimal} (see Theorem \ref{thm:exist_minimal_dshadowing}) are $\dbarm$-stable. This answers Austin's question in the affirmative way. We note that these examples do not have any of the specification properties mentioned in Corollary~\ref{cor:specification-prop-imp-stability}, because any of these specification properties implies that a shift having  one of them 
and positive topological entropy has many disjoint minimal proper subsets. Hence such a shift space is neither minimal nor proximal. 
\end{rem}
We list certain properties of $\dbarm$-stable shift spaces. But first we recall some definitions.
Let $X\subseteq \FS$ be a shift space.  The \emph{measure center} of $X$ is the smallest shift space $X^+\subseteq X$ such that  $\mu(X^+)=1$ for every $\mu\in\Ms(X)$. In other words, $X^+$ is the smallest subshift of $X$ containing supports of all invariant measures on $X$. The measure center is 
determined by the language of all words in $\lang(X)$ whose cylinders have positive measure for at one measure in $\Ms(X)$, that is 
\begin{equation*}
    \lang(X^+)=\left\{ w\in\lang(X) \mid \exists \mu \in \Ms(X) : \mu[w]>0 \right\}.
\end{equation*} The following observation follows directly from the definitions.
\begin{prop}\label{prop:dbar-stab-meas-center}
A shift space $X$ is $\dbarm$-stable if and only if its measure center $X^+$ is $\dbarm$-stable.
\end{prop}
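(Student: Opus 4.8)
The plan is to exploit the fact that $\dbarm$-stability is a statement purely about the simplex $\Ms(X)$ and its weak$^*$-neighbourhoods, and that $\Ms(X) = \Ms(X^+)$ by the very definition of the measure center. Indeed, every $\mu \in \Ms(X)$ satisfies $\mu(X^+) = 1$, so $\mu \in \Ms(X^+)$; conversely $X^+ \subseteq X$ gives $\Ms(X^+) \subseteq \Ms(X)$. Hence the two simplices coincide as subsets of $\Ms(\FS)$.

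With that identity in hand, I would simply unwind the definition of $\dbarm$-stability. Suppose $X$ is $\dbarm$-stable and fix $\eps > 0$. There is a weak$^*$-open neighbourhood $\mathscr{U}$ of $\Ms(X) = \Ms(X^+)$ such that every $\nu \in \mathscr{U}$ admits $\mu \in \Ms(X)$ with $\dbarm(\mu,\nu) < \eps$; since $\mu \in \Ms(X) = \Ms(X^+)$, this $\mathscr{U}$ witnesses $\dbarm$-stability of $X^+$ as well. The converse direction is identical with the roles of $X$ and $X^+$ interchanged, again using $\Ms(X) = \Ms(X^+)$. So the proof is essentially a one-line observation once the equality of invariant-measure simplices is recorded.

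There is really no obstacle here; the only thing to be careful about is to state explicitly why $\Ms(X) = \Ms(X^+)$, namely that it is immediate from the defining property of the measure center that the support of every $\mu \in \Ms(X)$ is contained in $X^+$. I would phrase the proof to make clear that $\dbarm$-stability, as formalised in the definition, refers only to $\Ms(X)$ (through its weak$^*$-neighbourhoods) and to distances measured by $\dbarm$ to elements of $\Ms(X)$, both of which are invariant under replacing $X$ by $X^+$. If one prefers, one can also invoke Lemma~\ref{lem:equiv_cond_stability_shift}, but this is not needed: the neighbourhood basis $(U_n(X))$ is not the same as $(U_n(X^+))$, so working directly with the definition via an arbitrary weak$^*$-neighbourhood $\mathscr{U}$ of the common simplex is cleaner.

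Concretely, I would write: Since $\mu(X^+) = 1$ for every $\mu \in \Ms(X)$ and $X^+ \subseteq X$, we have $\Ms(X) = \Ms(X^+)$. The definition of $\dbarm$-stability of a shift space $Y$ refers only to weak$^*$-open neighbourhoods of $\Ms(Y)$ in $\Ms(\FS)$ and to $\dbarm$-distances to measures in $\Ms(Y)$; as $\Ms(X) = \Ms(X^+)$, a neighbourhood $\mathscr{U}$ and the choice of approximating measure $\mu$ that witness $\dbarm$-stability for one of $X$, $X^+$ witness it for the other. Hence $X$ is $\dbarm$-stable if and only if $X^+$ is, completing the proof.
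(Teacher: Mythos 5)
Your proof is correct and is essentially the paper's argument: the paper simply remarks that the proposition ``follows directly from the definitions,'' and the content of that remark is exactly your observation that $\Ms(X)=\Ms(X^+)$ and that $\dbarm$-stability depends only on the set $\Ms(\cdot)\subseteq\Ms(\FS)$ through its weak$^*$-neighbourhoods and $\dbarm$-distances. Your added caution about working with the bare definition rather than Lemma~\ref{lem:equiv_cond_stability_shift} (since $U_n(X)\neq U_n(X^+)$ in general) is sensible but does not change the substance.
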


It is possible that $X^+=X$ holds true for every $\dbarm$-stable shift space. In the next proposition, we note that $\dbarm$-stability of $X$ implies that the canonical Markov approximations of $X$ form a sequence of shift spaces satisfying \eqref{four-modes-1}. That is, $\dbarm$-stability implies that $X$ has a property we call \emph{$\dbarm$-approachability}.

\begin{prop}\label{prop:dstab-imp-dbarm-approach}
If $X\subseteq\FS$ is a $\dbarm$-stable shift space, then 
\[\Hdbarm(\Ms(X^M_n),\Ms(X))\to 0\quad\text{as} \quad n\to\infty.\]
\end{prop}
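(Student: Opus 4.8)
The plan is to compare $\Ms(X^M_n)$ and $\Ms(X)$ from both sides of the Hausdorff distance. One direction is essentially free: since $X\subseteq X^M_n$ for every $n$, we have $\Ms(X)\subseteq\Ms(X^M_n)$, so
\[
\sup_{\mu\in\Ms(X)}\dbarm(\mu,\Ms(X^M_n))=0
\]
for all $n$. Hence it suffices to control the other term, namely $\sup_{\nu\in\Ms(X^M_n)}\dbarm(\nu,\Ms(X))$, and show it tends to $0$ as $n\to\infty$. The key point is that invariant measures of $X^M_n$ concentrate near $X$ in the weak$^*$ topology as $n$ grows, and then $\dbarm$-stability upgrades this weak$^*$ closeness to $\dbarm$-closeness to $\Ms(X)$.

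First I would fix $\eps>0$ and invoke Lemma \ref{lem:equiv_cond_stability_shift}: there are $\delta>0$ and $N\in\N$ such that any $\nu\in\Ms(\FS)$ with $\nu(U_m(X))>1-\delta$ for some $m\ge N$ satisfies $\dbarm(\nu,\Ms(X))<\eps$. Now take $n\ge N$ and let $\nu\in\Ms(X^M_n)$ be arbitrary. Since $X^M_n$ is the $n$-th Markov approximation, $\lang_{n+1}(X^M_n)=\lang_{n+1}(X)$, and more to the point $\lang_{N}(X^M_n)=\lang_N(X)$ because $N\le n$; in fact every word of length $N$ appearing in $X^M_n$ already appears in $X$. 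Consequently $\nu$ is supported entirely on $U_N(X)=\bigcup\{[u]\mid u\in\lang_N(X)\}$, so $\nu(U_N(X))=1>1-\delta$. By the choice of $\delta$ and $N$ via Lemma \ref{lem:equiv_cond_stability_shift} applied with $m=N$, we get $\dbarm(\nu,\Ms(X))<\eps$. Since $\nu\in\Ms(X^M_n)$ was arbitrary,
\[
\sup_{\nu\in\Ms(X^M_n)}\dbarm(\nu,\Ms(X))\le\eps\qquad\text{for all }n\ge N,
\]
and combined with the vanishing of the other term this gives $\Hdbarm(\Ms(X^M_n),\Ms(X))\le\eps$ for all $n\ge N$. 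As $\eps>0$ was arbitrary, $\Hdbarm(\Ms(X^M_n),\Ms(X))\to 0$.

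The only subtlety — and the step I would be most careful about — is the claim that invariant measures of $X^M_n$ live on $U_N(X)$ for $n\ge N$; this is immediate from $\lang_N(X^M_n)=\lang_N(X)$, which is exactly the defining property of the Markov approximation recorded in Section \ref{sec:definitions} ($\lang_j(X^M_n)=\lang_j(X)$ for $0\le j\le n+1$). So there is essentially no obstacle: $\dbarm$-stability is designed precisely to convert the trivial weak$^*$ approximation of $X$ by its Markov approximations into a $\dbarm$ approximation of $\Ms(X)$ by $\Ms(X^M_n)$. One could alternatively phrase the argument using the original definition of $\dbarm$-stability with a weak$^*$ open neighborhood $\mathscr U$ of $\Ms(X)$ and noting that $\Ms(X^M_n)\subseteq\mathscr U$ for all large $n$ by upper semicontinuity of $X\mapsto\Ms(X)$, but the route through Lemma \ref{lem:equiv_cond_stability_shift} is cleaner because the containment $\Ms(X^M_n)\subseteq U_N(X)$ for $n\ge N$ is exact rather than asymptotic.
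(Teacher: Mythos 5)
Your proposal is correct and follows essentially the same route as the paper: both arguments invoke the reformulation of $\dbarm$-stability in Lemma \ref{lem:equiv_cond_stability_shift} and then observe that $X^M_m\subseteq U_n(X)$ for $m\ge n$ (equivalently, $\lang_n(X^M_m)=\lang_n(X)$), so every $\nu\in\Ms(X^M_m)$ satisfies $\nu(U_n(X))=1$ and is therefore $\eps$-close to $\Ms(X)$ in $\dbarm$. Your explicit remark that the other half of the Hausdorff distance vanishes because $\Ms(X)\subseteq\Ms(X^M_n)$ is a small point the paper leaves implicit, but it is not a different argument.
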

\begin{proof}
Fix $\eps>0$. Let $\delta>0$ and $n\ge 1$ be such that if $\nu\in\Ms(\FS)$ and $\nu(U_n(X))>1-\delta$, then $\dbarm(\Ms(X),\nu)<\eps$. Fix $m\ge n$ and $\mu\in\Ms(X^M_m)$. Since $X^M_m\subseteq U_n(X)$ we see that
$\dbarm(\mu,\Ms(X))<\eps$, so $\Hdbarm(\Ms(X^M_m),\Ms(X))<\eps$ for $m\ge n$. Hence, $\Hdbarm(\Ms(X^M_n),\Ms(X))\to 0$ as $n\to\infty$.
\end{proof}

The converse is not true, that is the condition $\Hdbarm(\Ms(X^M_n),\Ms(X))\to 0$ as $n\to\infty$ does not imply $\dbarm$-stability. Any  shift of finite type $X$ with non-transitive $X^+$ is a counterexample. A concrete example is the shift space $X$ over $\{0,1\}$ consisting of only two fixed points $0^\infty$ and $1^\infty$. It is a binary shift of finite type with $01$ and $10$ as the forbidden words. Such a shift space (trivially) satisfies $\Hdbarm(\Ms(X^M_n),\Ms(X))\to 0$ as $n\to\infty$, but is not $\dbarm$-stable because of Proposition \ref{prop:dbar-stab-implies-ent-dens} below. 
Nevertheless, adding an assumption of chain mixing to $\dbarm$-approachability, we obtain $\dbarm$-stability.

\begin{prop}
If $X\subseteq\FS$ is a chain mixing  shift space with $\Hdbarm(\Ms(X^M_n),\Ms(X))\to 0$ as $n\to\infty$, then $X$ is $\dbarm$-stable. 
\end{prop}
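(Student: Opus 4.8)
The plan is to combine $\dbarm$-approachability with the characterization from Theorem~6 of \cite{KKK2}: under chain mixing, $\dbar$-approachability is equivalent to $\sigma(X)=X$ together with the $\dbar$-shadowing property, and by Proposition~\ref{prop:dbar-shadowing-stab} the latter gives $\dbarm$-stability. So it suffices to upgrade the hypothesis $\Hdbarm(\Ms(X^M_n),\Ms(X))\to 0$ to the genuinely topological statement $\Hdbar(X^M_n,X)\to 0$, i.e. to $\dbar$-approachability. The natural reduction is to replace $X$ by its measure center $X^+$: by Proposition~\ref{prop:dbar-stab-meas-center} it is enough to prove $X^+$ is $\dbarm$-stable, and since $\Ms(X)=\Ms(X^+)$ the $\dbarm$-approachability hypothesis will transfer to $X^+$ once we check that the Markov approximations behave compatibly (this needs a small argument, since $(X^+)^M_n$ and $X^M_n$ differ in general).

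The key step is to show that chain mixing plus $\dbarm$-approachability forces $\Ms(X)$ to be "$\dbar$-dense" in $X$ in the following sense: every $x\in X$ is $\dbar$-approximated arbitrarily well by points lying in supports of invariant measures on $X$ — equivalently, $X^+$ is $\Hdbar$-close to $X$, and in fact one expects $X^+=X$ under chain mixing. Here is the mechanism I would use. Fix $x\in X$ and $\eps>0$. Choose $n$ large; since $X^M_n$ is topologically mixing (chain mixing), one can concatenate the successive length-$n$ blocks $x_{[0,n)}, x_{[n,2n)},\ldots$ of $x$ using short connecting words admissible in $X^M_n$, producing a point $y\in X^M_n$ that agrees with $x$ outside a set of density $O(1/n)$, so $\dbar(x,y)$ is small. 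Then $y\in\Ms(X^M_n)$-support, and by $\dbarm$-approachability there is $\mu\in\Ms(X)$ with $\dbarm(\delta\text{-gen}(y),\mu)$ small; pushing this through \cite[Thm.~15.23]{Glasner} as in the proof of Proposition~\ref{prop:dbar-shadowing-stab}, one extracts a point in the support of some invariant measure on $X$ that $\dbar$-shadows $y$, hence $x$. This shows $\Hdbar(X^+,X)$ is as small as we like in the relevant sense; combined with the fact that $\dbar$-shadowing is inherited by $\Hdbar$-limits, and that $X^+$ has the $\dbar$-shadowing property (which is where the real work sits), we get what we need for $X$ itself.

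Alternatively — and this is probably cleaner to write — I would argue directly at the level of Lemma~\ref{lem:equiv_cond_stability_shift}. Fix $\eps>0$. By $\dbarm$-approachability pick $N$ with $\Hdbarm(\Ms(X^M_N),\Ms(X))<\eps/2$. I claim there is $\delta>0$ and $m\ge N$ such that any $\nu\in\Ms(\FS)$ with $\nu(U_m(X))>1-\delta$ is $\dbarm$-close to $\Ms(X^M_N)$, hence to $\Ms(X)$. Take a generic point $y$ for such $\nu$; as in Proposition~\ref{prop:dbar-shadowing-stab}, outside a set of density $<\delta$ the length-$m$ windows of $y$ lie in $\lang_m(X)=\lang_m(X^M_{m-1})\subseteq\lang_m(X^M_N)$ once $m>N$. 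Using chain mixing of $X^M_N$, patch these admissible windows together with short bridges in $X^M_N$ to obtain $z\in X^M_N$ with $\dbar(y,z)<\eps/2$ (choosing $m$ large so that bridge density is $<\eps/4$ and $\delta<\eps/4$); then any measure generated by $z$ lies in $\Ms(X^M_N)$ and is $\eps/2$-close in $\dbarm$ to $\nu$ by \cite[Thm.~15.23]{Glasner}. So $\dbarm(\nu,\Ms(X))<\eps$, and Lemma~\ref{lem:equiv_cond_stability_shift} gives $\dbarm$-stability of $X$.

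The main obstacle is the bridging/patching step: one must verify that with $X^M_N$ topologically mixing (not merely transitive), blocks of length $m\gg N$ that are individually in $\lang(X^M_N)$ can be concatenated using connecting words whose total length is a vanishing fraction, so that the resulting point is genuinely in $X^M_N$ and $\dbar$-close to $y$. This requires a uniform bound on bridge lengths coming from mixing of $X^M_N$ (which is fixed once $\eps$ is fixed), and care that the pieces of $y$ being patched are not too short — which is exactly why we control the density of "bad" windows by $\delta$ and take $m$ large relative to the mixing constant of $X^M_N$. Everything else — genericity, the density estimates, the passage from $\dbar$-closeness of points to $\dbarm$-closeness of generated measures — is routine and parallels Proposition~\ref{prop:dbar-shadowing-stab}.
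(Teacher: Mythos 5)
Your second argument is essentially the paper's proof, and it is correct. The paper's route is: since $X$ is chain mixing, $X^M_N$ is a topologically mixing shift of finite type, hence has the $\dbar$-shadowing property (by \cite{KKK2}), hence is $\dbarm$-stable by Proposition \ref{prop:dbar-shadowing-stab}; one then picks $\delta$ and $m\ge N$ from Lemma \ref{lem:equiv_cond_stability_shift} applied to $X^M_N$, notes $U_m(X)\subseteq U_m(X^M_N)$, and finishes with the triangle inequality through $\Ms(X^M_N)$ using $\Hdbarm(\Ms(X^M_N),\Ms(X))<\eps/2$. Your ``claim'' in the second approach is exactly the $\dbarm$-stability of $X^M_N$, which you propose to re-prove by hand via the generic-point-and-patching argument; this works, but all of that labour (including the bridging step you flag as the main obstacle, with its issues about short gaps and uniform mixing constants) is already packaged in the statement that mixing SFTs have $\dbar$-shadowing together with Proposition \ref{prop:dbar-shadowing-stab}, so you can simply cite those and keep only the two-line triangle inequality.

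Your first sketch, by contrast, should be discarded: it rests on upgrading $\Hdbarm(\Ms(X^M_n),\Ms(X))\to 0$ to $\Hdbar(X^M_n,X)\to 0$ so as to invoke Theorem~6 of \cite{KKK2}. Whether chain mixing plus $\dbarm$-approachability implies $\dbar$-approachability is precisely a question the paper leaves open (and Proposition \ref{prop:diff-limits} shows that without further hypotheses the $\Hdbarm$-convergence of simplices does not control $\Hdbar$ at all, since the latter can stay near $1$ while the former tends to $0$). Likewise the expectation that $X^+=X$ under these hypotheses is listed among the open questions. So that branch of the proposal is not a proof; only the second branch is.
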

\begin{proof}
Fix $\eps>0$. Find $N>0$ such that $\Hdbarm(\Ms(X^M_N),\Ms(X))\le \eps/2$. Since $X$ is chain mixing, $X^M_N$ is a topologically mixing shift of finite type so it also has $\dbar$-shadowing property (see \cite{KKK2}). Hence $X^M_N$ is $\dbarm$-stable by Proposition \ref{prop:dbar-shadowing-stab}. Let $\delta>0$ and $m\ge 1$ be such that if $\nu\in\Ms(\FS)$ and $\nu(U_m(X^M_N))>1-\delta$, then $\dbarm(\Ms(X_N^M),\nu)<\eps/2$. Without loss of generality we have  $m\ge N$, so $U_m(X)\subseteq U_m(X^M_N)$. 
Let $\mu\in\Mse(\FS)$ be such that $\mu(U_m(X))>1-\delta$. Then $\mu(U_m(X^M_N))\ge \mu(U_m(X)) >1-\delta$. Hence $\dbarm(\mu,\Ms(X^M_N))<\eps/2$, so 
there exists $\mu'\in\Mse(X^M_N)$ such that $\dbarm(\mu,\mu')<\eps/2$. Since
$\Hdbarm(\Ms(X^M_N),\Ms(X))<\eps/2$ there exists $\xi\in\Mse(X)$ with $\dbarm(\xi,\mu)\le\dbarm(\xi,\mu')+\dbarm(\mu',\mu)<\eps$. 
\end{proof}

We do not know if $\dbarm$-stability implies the stronger condition called $\dbar$-approachability (the sequence of canonical Markov approximations of our shift space is a sequence of shift spaces satisfying condition \eqref{four-modes-4}), even if we assume that the shift space is chain-transitive or chain-mixing. The examples discussed in Proposition \ref{prop:diff-limits} suggest that it might not be the case.

Next, we note that $\dbarm$-stability implies weak$^*$ density of ergodic measures in $\Ms(X)$ and, as a consequence, entropy density (interestingly, we need to prove weak$^*$ density first to obtain transitivity of $X^+$ and obtain entropy density as a consequence of transitivity of $X^+$). Hence if $X$ is a $\dbarm$-stable shift space then the simplex of invariant measures $\Ms(X)$ is either a Poulsen simplex or a singleton. Note that the latter possibility can occur. As an example, take $X=\{0^\infty\}$.

\begin{prop}\label{prop:dbar-stab-implies-ent-dens}
If $X\subseteq\FS$ is a $\dbarm$-stable shift space, then $X^+$ is transitive and ergodic measures are entropy dense in $\Ms(X)$.
\end{prop}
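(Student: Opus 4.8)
The plan is to prove the two conclusions in sequence: first weak$^*$ density of ergodic measures in $\Ms(X)$, then transitivity of $X^+$, and finally entropy density of $\Ms(X)$ as a consequence of the transitivity of $X^+$ together with $\dbarm$-stability.

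\medskip

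\textbf{Step 1: Weak$^*$ density of ergodic measures.} I would fix $\mu\in\Ms(X)$, a weak$^*$ neighbourhood $V$ of $\mu$, and $\eps>0$; the goal is to find an ergodic $\nu$ in $V$ close to $\mu$ in $\dbarm$ (which, since $\dbarm$-convergence implies weak$^*$-convergence, will handle density; the extra closeness is recorded for reuse in Step 3). By Lemma \ref{lem:equiv_cond_stability_shift}, pick $\delta>0$ and $N$ so that $\dbarm(\lambda,\Ms(X))<\eps$ whenever $\lambda\in\Ms(\FS)$, $n\ge N$, and $\lambda(U_n(X))>1-\delta$. Now approximate $\mu$ in the weak$^*$ topology by a measure supported on a \emph{mixing} shift of finite type that almost contains $X$: concretely, take a large $n$ (at least $N$, and large enough that the $n$-cylinder data of $X$ pins down weak$^*$-closeness to $\mu$) and consider the Markov approximation $X^M_n$, or rather a topologically mixing SFT $Y_n$ with $\lang_{n+1}(Y_n)=\lang_{n+1}(X)$ obtained from the Rauzy graph as in Section \ref{sec:definitions} (chain mixing is not assumed here, so I instead artificially symmetrize/complete the graph, or more simply invoke that $X^M_n\subseteq U_n(X)$ and pass to a mixing SFT containing $X^M_n$ and still inside $U_n(X)$ after possibly enlarging the allowed alphabet blocks — the point is only to produce measures inside $U_n(X)$). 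Mixing SFTs have entropy-dense (indeed, they have the specification property, hence dense) ergodic measures, so choose an ergodic measure $\nu$ on this SFT with $\nu\in V$; then $\nu(U_n(X))=1>1-\delta$, so $\dbarm(\nu,\Ms(X))<\eps$, and in particular $\nu$ is ergodic and weak$^*$-close to $\mu$. This gives weak$^*$ density of $\Mse(X)$ in $\Ms(X)$, provided $V$ was chosen so that the SFT measure is genuinely inside $V$ — this is where I need $n$ large.

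\medskip

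\textbf{Step 2: Transitivity of $X^+$.} Having $\Mse(X)$ weak$^*$-dense in $\Ms(X)$, I claim $X^+$ is transitive. Take $u,w\in\lang(X^+)$; by definition of the measure center there are $\mu_u,\mu_w\in\Ms(X)$ with $\mu_u[u]>0$, $\mu_w[w]>0$. The convex combination $\mu=\tfrac12(\mu_u+\mu_w)$ is invariant with $\mu[u],\mu[w]>0$. By Step 1 there is an ergodic $\nu$ weak$^*$-close enough to $\mu$ that $\nu[u]>0$ and $\nu[w]>0$ (cylinders are clopen, so these are weak$^*$-open conditions). An ergodic measure with $\nu[u]>0$ and $\nu[w]>0$ has a generic point $x$ in which, by the ergodic theorem, both $u$ and $w$ occur with positive frequency; in particular $u$ occurs at some position and $w$ occurs at some later position, so $uv w\in\lang(X)$ for the connecting block $v$, and since $x$ generates the invariant measure $\nu$, every cylinder met by $x$ has positive $\nu$-measure, so $uvw\in\lang(X^+)$ as well. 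Hence $X^+$ is transitive.

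\medskip

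\textbf{Step 3: Entropy density.} Fix $\mu\in\Ms(X)$, a weak$^*$ neighbourhood $U$, and $\eps>0$; we want $\nu\in U\cap\Mse(X)$ with $|h(\nu)-h(\mu)|<\eps$. By Proposition \ref{prop:dbar-stab-meas-center} we may assume $X=X^+$, which is now transitive. I would approximate $\mu$ first by a measure concentrated on finitely many periodic orbits: by the ergodic decomposition and standard convexity/upper-semicontinuity of entropy together with the continuity of $h$ under $\dbarm$, one can find $\mu'$ a finite convex combination of ergodic measures with $\mu'\in U$ and $h(\mu')>h(\mu)-\eps/2$ (take the entropy slightly \emph{higher}, or equal, by choosing components near the top of the ergodic decomposition). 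The standard route to entropy density then runs: realize $\mu'$ approximately by a long word (or a periodic orbit) whose orbit-measure is weak$^*$-close to $\mu'$ and whose ``combinatorial entropy'' (exponential growth of the number of such words, built using the transitivity of $X^+$ to concatenate the blocks visible in $\mu'$ with controlled gaps) is at least $h(\mu)-\eps$, while the resulting subsystem is contained in $U$ and in $X$; the measure of maximal entropy on that subsystem is ergodic, close to $\mu$ in weak$^*$, and has entropy between $h(\mu)-\eps$ and (by upper semicontinuity, since everything sits in a small weak$^*$ neighbourhood of $\mu$) $h(\mu)+\eps$. Alternatively, and more in the spirit of the paper, invoke that transitivity of $X^+$ for a $\dbarm$-stable shift lets us quote the entropy-density machinery of \cite{KKK2}: $\dbarm$-stability gives $\Hdbarm(\Ms(X^M_n),\Ms(X))\to0$ (Proposition \ref{prop:dstab-imp-dbarm-approach}), each $X^M_n$ can be replaced by a mixing SFT with entropy-dense ergodic measures, and entropy density passes to $\Hdbarm$-limits by the result recalled in the introduction — here $\dbarm$-closeness of the simplices plus continuity of $h$ under $\dbarm$ is exactly what is needed.

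\medskip

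\textbf{Main obstacle.} The delicate point is Step 1: producing, for each weak$^*$ neighbourhood of a given $\mu\in\Ms(X)$, an \emph{ergodic} measure in that neighbourhood lying inside some $U_n(X)$ — i.e.\ bootstrapping from ``$\Ms(\FS)\cap U_n(X)$ is nonempty and weak$^*$-dense near $\Ms(X)$'' to the existence of an ergodic element there — without assuming chain mixing. The clean fix is to pass through a topologically mixing SFT that still fits inside $U_n(X)$ (enlarging block-length as needed), using that mixing SFTs have dense ergodic measures; making sure such a mixing SFT exists inside $U_n(X)$ and carries measures weak$^*$-close to the prescribed $\mu$ is the technical heart of the argument. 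Everything after that (Steps 2 and 3) is standard once transitivity of $X^+$ is in hand and we are allowed to cite the $\dbarm$-continuity of entropy and the entropy-density transfer result from \cite{KKK2}.
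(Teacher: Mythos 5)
Your Steps 2 and 3 (via the second route you sketch, through Propositions \ref{prop:dbar-stab-meas-center} and \ref{prop:dstab-imp-dbarm-approach} and the $\Hdbarm$-transfer of entropy density from \cite{KKK2}) are essentially the paper's argument. The problem is Step 1, which you yourself flag as the unresolved ``technical heart'': you try to manufacture an ergodic measure by locating a topologically mixing SFT whose language of length-$n$ words sits inside $\lang_n(X)$ and which still carries measures weak$^*$-close to the prescribed $\mu$. This is both unnecessary and not clearly possible in general -- nothing in the hypotheses guarantees that such a mixing SFT ``inside $U_n(X)$'' exists near $\mu$ (your suggestions to ``symmetrize the graph'' or ``enlarge the allowed alphabet blocks'' would typically destroy the inclusion $\lang_n(Y)\subseteq\lang_n(X)$ that makes $\nu(U_n(X))=1$). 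The point you are missing is that the auxiliary ergodic measure does \emph{not} need to live anywhere near $X$ in the sense of supports: ergodic measures of the \emph{full shift} are weak$^*$-dense in $\Ms(\FS)$, so one simply picks an ergodic $\nu\in\Mse(\FS)$ weak$^*$-close to the target measure; since $U_n(X)$ is clopen and has full measure for any measure in $\Ms(X)$, weak$^*$-closeness alone forces $\nu(U_n(X))>1-\delta$, and then $\dbarm$-stability does all the work of pulling $\nu$ back to $X$. This is exactly how the paper proceeds (applied to $\frac12(\mu_1+\mu_2)$ for $\mu_1,\mu_2\in\Mse(X)$, which suffices for density), and it makes your ``main obstacle'' evaporate.

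There is a second, related slip: after obtaining $\dbarm(\nu,\Ms(X))<\eps$ you conclude ``in particular $\nu$ is ergodic and weak$^*$-close to $\mu$'' and declare density of $\Mse(X)$ in $\Ms(X)$. But $\nu$ is not an element of $\Ms(X)$ -- it lives on your auxiliary SFT (or, in the corrected version, on the full shift). What density of $\Mse(X)$ requires is the measure $\xi\in\Ms(X)$ realizing $\dbarm(\nu,\xi)<\eps$, together with the observation that $\xi$ may be taken \emph{ergodic} because $\nu$ is ergodic (via the ergodic decomposition of a near-optimal joining, equivalently the identity $\Hdbarm(\Ms(X),\Ms(Y))=\Hdbarm(\Mse(X),\Mse(Y))$ from \cite{KKK2}); then $\xi$ is weak$^*$-close to $\mu$ because $\dbarm$-convergence dominates weak$^*$-convergence. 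Without this step the argument produces no ergodic measure on $X$ at all. Once Step 1 is repaired along these lines, your transitivity argument in Step 2 (which is a direct version of what the paper delegates to Proposition 6.4 of \cite{GK}) and your Step 3 go through.
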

\begin{proof} We first prove that $\Mse(X)$ is weak$^*$ dense  in $\Ms(X)$. 
To prove the density of ergodic measures it is enough to show that for every $\mu_1,\mu_2\in\Mse(X)$ the measure $\frac12(\mu_1+\mu_2)$ is a limit of a sequence of ergodic measures in $\Mse(X)$. Fix $\eps>0$. Let $\delta>0$ and $n\ge 1$ be such that if $\mu(U_n(X))>1-\delta$, then $\dbarm(\Ms(X),\mu)<\eps$. Since the ergodic measures of $\FS$ are dense in $\Ms(\FS)$, when the latter space is endowed with the weak$^*$ topology we can find $\nu\in\Ms(\FS)$ with $D(\frac12(\mu_1+\mu_2),\nu)$ as small as necessary. Here $D$ stands for any metric on $\Ms(\FS)$ compatible with the weak$^*$ topology. In particular, we may assume that $D(\frac12(\mu_1+\mu_2),\nu)$ is sufficiently small to guarantee $\nu(U_n(X))>1-\delta$. By $\dbarm$-stability, there is $\xi\in\Ms(X)$ such that $\dbarm(\nu,\xi)<\eps$. Since $\nu$ is ergodic, we can assure that $\xi$ is an ergodic measure. By the triangle inequality, $D(\frac12(\mu_1+\mu_2),\xi)\le D(\frac12(\mu_1+\mu_2),\nu)+D(\nu,\xi)$. Since $\nu$ and $\xi$ can be arbitrarily close in $\dbarm$, they can be also arbitrarily close in $D$. Hence $\xi$ can be arbitrarily close to $\frac12(\mu_1+\mu_2)$ in $D$ and the ergodic measures must be weak$^*$ dense. Now, transitivity of $X^+$ follows easily from weak$^*$ density of ergodic measures (see Proposition 6.4 in \cite{GK} for details). 
By Proposition \ref{prop:dbar-stab-meas-center} the measure center $X^+$ is a $\dbarm$-stable shift space. Now, Proposition \ref{prop:dstab-imp-dbarm-approach} implies $\Hdbarm(\Ms((X^+)^M_n),\Ms(X^+))\to 0$ as $n\to\infty$. Transitivity of $X^+$ imply that its Markov approximations are entropy dense shift spaces. Hence ergodic measures are entropy dense in
$\Ms(X^+)$ by \cite{KKK2}, but clearly $\Ms(X)=\Ms(X^+)$.
\end{proof}



\begin{prop}
If $X$ is a strictly ergodic $\dbarm$-stable shift space, then the unique invariant measure on $X$ is isomorphic to an odometer. 
\end{prop}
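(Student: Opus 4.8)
The plan is to show that a strictly ergodic $\dbarm$-stable shift space $X$ is, measure-theoretically, as simple as possible: its unique invariant measure $\mu$ has zero entropy, and in fact the underlying topological dynamics must be an odometer. The key structural input is that strict ergodicity forces $X = X^+$ (the measure center), since the support of $\mu$ is all of $X$ and $X$ is minimal. So by Proposition~\ref{prop:dstab-imp-dbarm-approach}, the canonical Markov approximations satisfy $\Hdbarm(\Ms(X^M_n),\Ms(X))\to 0$; since $\Ms(X)=\{\mu\}$ this means every invariant measure on $X^M_n$ is $\dbarm$-close to $\mu$ for large $n$. First I would exploit this to control entropy: since $h$ is $\dbarm$-continuous on $\Ms(\FS)$ and $h(X^M_n)=\htop(X^M_n)\ge\htop(X)$, and every measure on $X^M_n$ has entropy within $o(1)$ of $h(\mu)$, we get $\htop(X)\le h(\mu) $ forced to be small. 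More carefully: pick for each $n$ a measure of maximal entropy $\mu_n\in\Ms(X^M_n)$, so $h(\mu_n)=\htop(X^M_n)\ge\htop(X)$; since $\dbarm(\mu_n,\mu)\to 0$ and $h$ is $\dbarm$-continuous, $h(\mu_n)\to h(\mu)$, whence $\htop(X)\le h(\mu)$. Combined with $h(\mu)\le\htop(X)$ this gives nothing yet, so the entropy bound alone is not enough — I need the finer Ornstein-theory consequence.

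The real mechanism is this: $X^M_n\supseteq X$ and all invariant measures on $X^M_n$ are within $\eps_n\to 0$ of the single measure $\mu$ in $\dbarm$. A shift space whose entire simplex of invariant measures has $\dbarm$-diameter tending to $0$ as we pass through an exhausting sequence of SFT approximations is extremely rigid. I would argue that $\mu$ must have entropy zero: if $h(\mu)>0$, then for large $n$, $X^M_n$ is an SFT containing $X$ with $\htop(X^M_n)\ge h(\mu)>0$; a positive-entropy SFT restricted to a transitive component supports measures that are far apart in $\dbarm$ — e.g. one can produce two ergodic measures on such a component with $\dbarm$-distance bounded below (this is where one uses that $\dbarm$ separates, say, measures giving very different frequencies to a fixed cylinder, and that a positive-entropy transitive SFT has an abundance of periodic points realizing incompatible symbol statistics). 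This contradicts $\Hdbarm(\Ms(X^M_n),\{\mu\})\to 0$. Hence $h(\mu)=0$.

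Having $h(\mu)=0$ and $X$ minimal (strictly ergodic), I would then invoke the $\dbarm$-approachability more forcefully together with a zero-entropy rigidity statement. A totally ergodic zero-entropy system that is a $\dbarm$-limit of its Markov approximations is trivial; more to the point, the Friedman–Ornstein-type philosophy running through the paper says that $\dbar$-shadowing/$\dbar$-approachability at zero entropy collapses the system. The cleanest route: a strictly ergodic shift with the property that its SFT approximations' invariant simplices shrink to a point in $\dbarm$ must be an \emph{isometric} (equicontinuous) extension of nothing, i.e. equicontinuous, and a minimal equicontinuous subshift is conjugate to a minimal rotation on a compact metrizable monothetic group; since subshifts of this type that are infinite are exactly odometers (and the finite case, a single periodic orbit, is the trivial odometer $\Z/m\Z$), we conclude. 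I would fill the gap "shrinking $\dbarm$-simplices $\Rightarrow$ equicontinuous" by showing the non-equicontinuous case produces, via the structure of the maximal equicontinuous factor and a Rokhlin-tower/marker argument, two invariant measures on some $X^M_n$ at $\dbarm$-distance bounded away from $0$, again contradicting Proposition~\ref{prop:dstab-imp-dbarm-approach}.

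The main obstacle I expect is precisely this last implication: turning "all invariant measures on the SFT approximations are $\dbarm$-close to $\mu$" into "$X$ is equicontinuous". The entropy-zero step is comparatively routine (continuity of $h$ in $\dbarm$ plus existence of measures of maximal entropy on SFTs), but ruling out weakly-mixing or merely non-equicontinuous zero-entropy behaviour requires genuinely constructing, inside $X^M_n$ for large $n$, a second invariant measure that is $\dbar$-far from $\mu$ — which forces a careful combinatorial argument using the Rauzy-graph description of $X^M_n$ from Section~\ref{sec:definitions} and the fact that failure of equicontinuity yields words in $\lang(X)$ that can be concatenated in a way producing two generic points with substantially different empirical distributions. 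I would organize the write-up so that this combinatorial lemma is isolated and stated separately, then the theorem follows by assembling: strict ergodicity $\Rightarrow X=X^+$; Proposition~\ref{prop:dstab-imp-dbarm-approach} $\Rightarrow$ shrinking simplices; the lemma $\Rightarrow$ equicontinuity and $h(\mu)=0$; classification of minimal equicontinuous subshifts $\Rightarrow$ odometer.
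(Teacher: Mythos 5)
Your plan has a genuine gap at exactly the point you flag, and the target you are steering toward is stronger than what the proposition asserts. The statement is that the unique invariant \emph{measure} is (measure-theoretically) isomorphic to an odometer, not that $X$ is topologically equicontinuous; trying to prove ``shrinking $\dbarm$-simplices $\Rightarrow$ equicontinuity'' is both unnecessary and dubious (strictly ergodic Toeplitz shifts are almost 1--1, non-equicontinuous extensions of odometers whose unique measure \emph{is} isomorphic to an odometer, so no argument of this shape can close the proof by producing a second far-away measure in $X^M_n$ from mere failure of equicontinuity). The combinatorial lemma you defer --- constructing, inside $X^M_n$, an invariant measure $\dbarm$-far from $\mu$ whenever $X$ is not equicontinuous --- is therefore not just unproved but unprovable as stated, and without it your argument only yields $h(\mu)=0$ (that step, via a horseshoe in a positive-entropy transitive component of $X^M_n$ giving two $\dbarm$-separated measures, is fine in spirit), which is far from the conclusion.

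The missing idea is much more direct. Since $X$ is strictly ergodic and infinite, each Markov approximation $X^M_n$ is an uncountable shift of finite type and hence carries ergodic measures supported on periodic orbits, and one can choose periodic measures $\mu_n^{\mathrm{per}}\in\Ms(X^M_n)$ whose least periods tend to infinity. By Proposition~\ref{prop:dstab-imp-dbarm-approach} (your first step, which is correct) one has $\Hdbarm(\Ms(X^M_n),\{\nu\})\to 0$, so $\dbarm(\mu_n^{\mathrm{per}},\nu)\to 0$. The proof then concludes by invoking the known rigidity theorem that a $\dbarm$-limit of periodic measures with periods tending to infinity is isomorphic to the Haar measure on an odometer (implicit in \cite{RS}, and following from \cite[Thm.~1.7]{BKPLR}). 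This black box replaces both your entropy argument and your equicontinuity argument; without it, or some substitute for it, the proof does not close.
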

\begin{proof}
Assume that $X$ is a strictly ergodic infinite shift space. Let $\nu$ be its unique ergodic invariant measure, that is, $\Ms(X)=\{\nu\}$. Hence, for every $n\ge 1$ the Markov approximation  $X_n^M$ of $X$ is an uncountable shift of finite type. In particular, for every $n\ge 1$ the simplex  $\Ms(X_n^M)$ contains infinitely many periodic ergodic invariant measures (measures concentrated on periodic orbits). 
Now assume that $X$ is $\dbarm$-stable, so $\Hdbarm(\Ms(X^M_n),\Ms(X))\to 0$ as $n\to\infty$. It follows that 
$\lim_{n\to\infty}\dbarm(\mu^{\text{per}}_n,\nu)=0$ for any choice of periodic ergodic measures  $\mu_n^{\text{per}}\in \Ms(X^M_n)$. In particular, one may take measures on periodic points whose primary periods tend to infinity. A measure that is a $\dbarm$-limit of such a sequence of periodic measures must be isomorphic to a Haar measure on some odometer (this result is implicit in \cite{RS} and follows directly from \cite[Thm.\ 1.7]{BKPLR}, it does not need invertibility assumption).  
\end{proof}
\begin{rem}[Some open questions]
The results in the present section do not provide a complete picture of connections between the notions of $\dbarm$-stability and $\dbar$-approachability. For example, we were unable to answer the following questions:  
Can a non-trivial periodic orbit be $\dbarm$-stable shift space?
Can a strictly ergodic infinite shift space be $\dbarm$-stable? Is every $\dbarm$-stable system topologically mixing on its measure center? Can a shift space $X$ such that $X^+\neq X$ be $\dbarm$-stable?  
\end{rem}

\section{Comparing $\Hdbarm$ with $\Hdbar$} \label{sec:Hdabr-vs-Hdbarm}

If $(Z,\rho)$ is a bounded complete metric space, then so is $(\CL(Z),\rho^H)$ (see \cite[\S2.15]{IN}). Hence the Hausdorff metric $\Hdbarm$ induced on $\CL(\Ms(\FS))$ by $\dbarm$ is complete and Cauchy condition provides a criterion for convergence of a sequence $(\Ms(X_k))_{k=1}^\infty$, where $X_k\subseteq\FS$ is a shift space for every $k\ge 1$.
But even if we know that $(\Ms(X_k))_{k=1}^\infty$ converges in $\dbarm$ to some $\M\in\CL(\Ms(\FS),\dbarm)$, it is not clear if there exists a shift space $X\subseteq\FS$ such that $\M=\Ms(X)$.
We provide an example showing that this need not to be the case at the end of this section.
But first we demonstrate that shift spaces $X$ and $Y$ with the $\dbar$-shadowing property are  $\Hdbar$ close if and only if they are $\Hdbarm$ close. 
On the other hand, without the $\dbar$-shadowing property the inequality in \eqref{hdbar-ineq} can be strict. We use a variant of Oxtoby's construction of nonuniquely ergodic minimal Toeplitz subshift to show that for every $\delta>0$ there are shift spaces $X$ and $Y$ such that
$\Hdbarm(\Ms(X), \Ms(Y))<\delta$ but $\Hdbar(X,Y)>1-\delta$. Finally, we show that if the measure center of a shift space $X$ has the $\dbar$-shadowing property, then $X$ also has it.


\begin{thm}\label{thm:dbarshad-dbarm-dbar}
If $X$ and $Y$ are shift spaces over $\alf$ with the $\dbar$-shadowing property such that \[\Hdbarm\left(\Ms(X),\Ms(Y)\right)<\eps^2\] for some $\eps>0$, then $\dbar^H(X,Y)<7\eps$.
\end{thm}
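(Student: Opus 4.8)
The plan is to connect the three pseudometrics $\dbar$ (on points), $\dbarm$ (on measures), and their Hausdorff versions $\Hdbar$, $\Hdbarm$ by passing through generic points and invariant measures. Fix $x \in X$; we must find $y \in Y$ with $\dbar(x,y) < 7\eps$, and by symmetry the same construction will control $\sup_{y \in Y}\dbar(y,X)$, giving the bound on $\dbar^H(X,Y)$. The first step is to replace the arbitrary point $x$ by a ``nearly generic'' point: since $\dbar(x,\sigma^k(x))=0$ and $\dbar$-balls are shift invariant, and since along a suitable subsequence $x$ generates some $\mu \in \Ms(X)$, I would first argue that $x$ can be $\dbar$-approximated (say within $\eps$) by a point $x'$ that is generic for some invariant measure $\mu\in\Ms(X)$. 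The clean way to do this is to use the $\dbar$-shadowing property of $X$ itself: cut $x$ into long blocks, and re-trace; more precisely, pass to a subsequence $(N_k)$ along which the empirical measures of $x$ converge to $\mu\in\Ms(X)$, extract from $x$ a ``$\dbar$-skeleton'' built from words in $\lang(X)$ of a fixed large length whose empirical distribution is close to $\mu$, and invoke $\dbar$-shadowing to trace it by a genuine generic point of $\mu$ in $X$ at cost $<2\eps$ (this is essentially the argument already used in the proof of Proposition~\ref{prop:dbar-shadowing-stab}).

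The second step is the measure-level transfer. Given $\mu\in\Ms(X)$, the hypothesis $\Hdbarm(\Ms(X),\Ms(Y)) < \eps^2$ supplies $\nu\in\Ms(Y)$ with $\dbarm(\mu,\nu) < \eps^2$. By the definition of $\dbarm$ there is a joining $\ajoining \in J(\mu,\nu)$ with $\int d_0\,\dif\ajoining < \eps^2$. The third and central step is to descend from this joining back to points: I want a point $y' \in Y$, generic for $\nu$, with $\dbar(x',y')$ small, where $x'$ is the generic point of $\mu$ from Step~1. By the ergodic decomposition (or by taking $\mu,\nu$ ergodic in the first place — note $\Hdbarm(\Ms(X),\Ms(Y))=\Hdbarm(\Mse(X),\Mse(Y))$ by \eqref{hdbar-ineq}, so we may assume $\mu,\nu$ ergodic and the joining $\ajoining$ ergodic) one gets a $\ajoining$-generic pair $(x',y')$; then $\dbar(x',y')$ equals the $\ajoining$-measure of $\{x_0\neq y_0\}$, which is $<\eps^2$. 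But $y'$ need not lie in $Y$ — it lies in the closure of $Y$ in the weak$^*$/product sense, which \emph{is} $Y$ since $Y$ is closed, so actually $y'\in Y$ directly once we realize the marginal of a $\ajoining$-generic point is $\nu$-generic and hence lands in $\supp\nu\subseteq Y$.

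The final step is to clean up and sum the errors. Putting the pieces together: $\dbar(x,x')<\eps$ (Step 1 applied to $X$), $\dbar(x',y')<\eps^2<\eps$ (Step 3), so $\dbar(x,y')<2\eps$ with $y'\in Y$; by symmetry every point of $Y$ is within $2\eps$ of $X$, and the crude bound $\dbar^H(X,Y)<7\eps$ follows with room to spare. The factor-of-$7$ slack strongly suggests the actual argument is messier than the idealized sketch above — in particular I expect the genuine obstacle to be Step~1/Step~3: one cannot literally assume the shadowing is exact, and extracting a generic point of a \emph{prescribed} measure from a $\dbar$-shadowing hypothesis (which only guarantees tracing of concatenations of $\lang(X)$-words, not control of which measure is generated) requires care — one must build the concatenation out of blocks drawn with the right empirical frequencies, control the $\dbar$-cost of the shadowing point possibly generating a \emph{different} (but $\dbarm$-close) measure, and use the standard fact \cite[Thm.~15.23]{Glasner} that $\dbar$-close points generate $\dbarm$-close measures to keep everything aligned. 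That bookkeeping — tracking a point, the measure it generates, the joining, and the traced point simultaneously while each step perturbs things by a controlled amount — is where the constant $7$ comes from and is the main thing to get right.
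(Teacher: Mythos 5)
Your overall architecture (points $\to$ measures $\to$ joinings $\to$ points, with $\dbar$-shadowing as the glue) is in the right spirit, but Step 1 as stated is a genuine gap that the rest of the sketch cannot recover from. An arbitrary $x\in X$ need \emph{not} be $\dbar$-approximable by a point generic for a single invariant measure: if $x$ spends rapidly growing stretches of coordinates emulating generic points of two ergodic measures $\mu_1,\mu_2$ that are far apart in $\dbarm$, then for any $x'$ generic for a single $\mu$ the limsup defining $\dbar(x,x')$ picks up, along the appropriate subsequence of prefix lengths, a Hamming discrepancy bounded below by roughly $\max_i\dbarm(\mu,\mu_i)$, which can be close to $1$. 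This is exactly why the paper does not reduce to one measure: it invokes the decomposition theorem of \cite{DownarowiczWiecek} to write $x=A^{(1)}B^{(1)}A^{(2)}B^{(2)}\ldots$ with the $A^{(i)}$ of small upper Banach density and each block $B^{(i)}$ empirically close to its \emph{own} ergodic measure $\mu^{(i)}\in\Mse(X)$, and then matches each $\mu^{(i)}$ separately to some $\nu^{(i)}\in\Ms(Y)$.

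A second gap sits in Step 3. Even for a fixed ergodic $\mu$ and an ergodic joining $\ajoining$ with $\int d_0\,\dif\ajoining<\eps^2$, a $\ajoining$-generic pair hands you \emph{some} $\mu$-generic point in the first coordinate, not the particular $x'$ you built in Step 1; and two generic points of the same ergodic measure can be far apart in $\dbar$ (two independent samples of the Bernoulli $(1/2,1/2)$ measure are at $\dbar$-distance $1/2$ almost surely). So the two estimates $\dbar(x,x')<\eps$ and $\dbar(x',y')<\eps^2$ do not concatenate. The paper avoids both problems by never leaving the concrete point $x$: it converts $\dbarm(\mu^{(i)},\nu^{(i)})<\eps^2$ into a statement about finite-dimensional joinings (Shields' $d^*_n$; the passage from $\eps^2$ to $\eps$ here is where the square is spent) asserting that $\mu^{(i)}$-most words $u$ of a fixed length $s$ admit a partner $w\in\lang_s(Y)$ with $\dHam(u,w)\le\eps$, rewrites each $B^{(i)}$ occurrence-by-occurrence using these partners, fills the exceptional positions with arbitrary words from $\lang(Y)$, and only then applies $\dbar$-shadowing of $Y$ once to the resulting concatenation. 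The constant $7$ is the sum of the densities lost at each of these stages; it is not slack around an idealized two-step argument.
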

\begin{proof}
Fix 
$x\in X$. Use $\dbar$-shadowing of $Y$ to find $s\in \N$ such that for every sequence $\{ w^{(j)} \}_{j=1}^\infty$ of words in $\lang\left(Y\right)$ with $|w^{(j)}|\ge s$ for every $j\geq1$, there exists $y\in Y$ such that
\begin{equation}
    \dbar ( w^{(1)}w^{(2)}w^{(3)}\ldots , y) < \eps.
\end{equation}
Pick $m$ such that $s<m\eps$. By \cite[Theorem 3.4]{DownarowiczWiecek} we find $l\geq m$ such that $x$ can be decomposed into infinite concatenation of blocks, that is, we can write
\begin{equation}\label{decompositionofx}
    x= A^{(1)}B^{(1)}A^{(2)}B^{(2)}\ldots.
\end{equation} and the blocks $A^{(1)}, A^{(2)}, \ldots $ and $B^{(1)},B^{(2)},\ldots$ satisfy
\begin{itemize}
\item for every $i\geq 1$ we have 
satisfies $m\leq |B^{(i)}| \leq l$;
\item for every $i\geq 1$  there exists an ergodic measure $\mu^{(i)}\in\Mse(X)$ such that
\begin{equation}\label{eq:d-star}
d^*(B^{(i)},\mu^{(i)})=\sum_{k=1}^\infty2^{-k}\sum_{w\in\alf^k}\left|\freq(w,B^{(i)})-\mu^{(i)}([w])\right|<\dfrac{\eps}{2^{s}},
\end{equation}
where
\begin{equation}\label{eq:freq-def}
\freq(w,B^{(i)})=\begin{cases}\frac{|\{1\le j \le |B^{(i)}|-l+1 \ : \ B^{(i)}_{[j,j+l)}=w\}|}{|B^{(i)}|}
,&\text{if }|w|=l\le |B^{(i)}|,\\
0,&\text{otherwise;}
\end{cases}
\end{equation}
\item the set of coordinates of $x$ which belong to the block $A^{(i)}$ in \eqref{decompositionofx} for some $i\ge 1$ has upper Banach density smaller than $\eps$.
In particular we have
\begin{equation}\label{ineq:dbar-|A|}
  \limsup_{n\to\infty}\frac{|A^{(1)}|+\ldots|A^{(n)}|}{|A^{(1)}B^{(1)}|+\ldots+|A^{(n)}B^{(n)}|}<\eps.
\end{equation}
\end{itemize}
Now, we use the assumption $\Hdbarm\left(\Ms(X),\Ms(Y)\right)<\eps^2$ and for every $i\geq 1$  
we find an ergodic measure $\nu^{(i)}\in \Ms(Y)$ such that
\begin{equation}
    \dbarm(\mu^{(i)},\nu^{(i)})<\eps^2.
\end{equation}

Following Shields \cite{Shields}, for every $\mu,\nu\in\Ms(\FS)$ and $n\ge 1$ we define $J_n=J_n(\mu,\nu)$ to be the set of measures $\lambda_n$ on $\alf^n\times\alf^n$ endowed with the powerset $\sigma$-algebra such that
for every $u,w\in\alf^n$ we have $\mu[u]=\lambda_n(\{u\}\times \alf^n)$ and $\nu[w]=\lambda_n(\alf^n\times\{w\})$. For $\alpha>0$ we let
$\Delta_n(\alpha)=\{(u,w)\in\alf^n\times\alf^n:\dHam(u,w)\le\alpha\}$. Finally, for $\mu,\nu\in\Ms(\FS)$ we write
\[
d^*_n(\mu,\nu)=\max_{\lambda_n\in J_n(\mu,\nu)}\min\{\alpha>0:\lambda_n(\Delta_n(\alpha)\ge 1-\alpha\}.
\]
By \cite[Section I.9]{Shields}, in particular \cite[Lemma I.9.12]{Shields}, we see that $\dbarm(\mu^{(i)},\nu^{(i)})<\eps^2$ implies that for every $n\ge 1$ we have $d^*_n(\mu^{(i)},\nu^{(i)})<\eps$. Hence, for every $n\ge 1$ and $i\ge 1$ there exists $\lambda^{(i)}_n\in J_n(\mu^{(i)},\nu^{(i)}))$ such that $\lambda^{(i)}_n(\Delta_n(\eps))>1-\eps$. 
Consider the set 
\[G_n^{(i)}=\{u\in\lang_n(X):\text{there exists }w\in\lang_n(Y)\text{ with }\lambda_n^{(i)}(\{(u,w)\}\cap\Delta_n(\eps))>0\}.\]
It follows that for every $u\in G_n^{(i)}$ we can pick $w_n^{(i)}(u)$ such that $\dHam(u,w^{(i)}_n(u))<\eps$ and
$\lambda_n^{(i)}(\{(u,w^{(i)}_n(u))\})>0$. In particular, $\nu^{(i)}[w^{(i)}_n(u)]>0$ and hence $w^{(i)}_n(u)\in\lang_n(Y)$.
In addition, we clearly have
\[
\lambda^{(i)}_n(\Delta_n(\eps))=\lambda^{(i)}_n\left(\Delta_n(\eps)\cap (G_n^{(i)}\times\alf^n)\right)>1-\eps.
\]
By an abuse of notation, for $i\ge 1$ and $n\ge 1$, by $\bigcup G^{(i)}_n$ we will understand $\bigcup\{[u]:u\in G_n^{(i)}\}$.

It follows that for every $n\ge 1$ and $i\ge 1$ we have
\begin{equation}\label{eq:mu-gi}
\mu^{(i)}\left(\bigcup G_n^{(i)} \right)=\lambda^{(i)}_n(G_n^{(i)}\times\alf^n)\ge \lambda^{(i)}_n(\Delta_n(\eps)\cap G_n^{(i)}\times\alf^n)>1-\eps.
\end{equation}
For each $i\geq 1$ we take $n=s$ and consider $G_s^{(i)}\subseteq \lang_s(X)$. Note that \eqref{eq:mu-gi} implies that $\mu^{(i)}(\bigcup G_s^{(i)})>1-\eps$.
In analogy with \eqref{eq:freq-def}, we define $\freq(G_s^{(i)},B^{(i)})$ to be number of coordinates in $B^{(i)}$ where some word from $G^{(i)}_s$ appears in $B^{(i)}$ divided by the length of $B^{(i)}$, that is,
\[
\freq(G_s^{(i)},B^{(i)})=\frac{\left|\{1\le j\le |B^{(i)}|-s+1: B^{(i)}_{[j,j+s)}\in G_s^{(i)}\}\right|}{|B^{(i)}|}.
\]
We easily see that
\[
\freq(G_s^{(i)},B^{(i)})=\sum_{w\in G_s^{(i)}}\freq(w,B^{(i)}).
\]
Let $P$ be the set of coordinates in $B^{(i)}$  covered by occurrences of words from $G^{(i)}_s$  in $B^{(i)}$, that is
\[
P=\{1\le p\le |B^{(i)}|: \exists 1\le j\le |B^{(i)}|-s+1 \text{ with }B^{(i)}_{[j,j+s)}\in G_s^{(i)}\text{ and }j\le p<j+s\}.
\]
We clearly have
\begin{equation}\label{ineq:freq-gi}
\freq(G_s^{(i)},B^{(i)})\le \frac{\left|P\right|}{|B^{(i)}|}.
\end{equation}
Furthermore
\begin{equation}\label{ineq:z}
\left|\mu^{(i)}\left(\bigcup G_s^{(i)}\right)-\freq(G_s^{(i)},B^{(i)})\right|\le \sum_{w\in G^{(i)}_s}\left|\freq(w,B^{(i)})-\mu^{(i)}([w])\right|.
\end{equation}
Using $d^*\left(B^{(i)},\mu^{(i)}\right)<\dfrac{\eps}{2^{s}}$ (cf. \eqref{eq:d-star}) we obtain
\begin{align}\label{ineq:zz}
\sum_{w\in G^{(i)}_s}\left|\freq(w,B^{(i)})-\mu^{(i)}([w])\right| 
& \le \sum_{w\in\alf^s}\left|\freq(w,B^{(i)})-\mu^{(i)}([w])\right|
\\ & \le 2^sd^*(B^{(i)},\mu^{(i)})<\eps.
\end{align}
Combining  \eqref{ineq:z} and \eqref{ineq:zz} we get
\begin{equation}\label{ineq:mu-gi-freq}
\left|\mu\left(\bigcup G_s^{(i)}\right)-\freq(G_s^{(i)},B^{(i)})\right|\le \eps.
\end{equation}
Combining \eqref{eq:mu-gi} and \eqref{ineq:mu-gi-freq} we see that
\[
1-2\eps\le  \frac{\left|P\right|}{|B^{(i)}|}.
\]
It follows that there exists a decomposition of $B^{(i)}$ 
such that
\begin{equation}\label{decompositionofB_i}
    B^{(i)}=v^{(i,1)}u^{(i,1)}v^{(i,2)}u^{(i,2)}\ldots v^{(i,\kappa(i))} u^{(i,\kappa(i))}v^{(i,\kappa(i)+1)},
\end{equation}
where $\kappa(i)$ is some (large) positive integer, $u^{(i,j)}\in G^{(i)}_s$ and $v^{(i,j)}\in\lang(X)\setminus G^{(i)}_s$. 
Furthermore,
\begin{equation}\label{ineq:|v_ij|}
\frac{|v^{(i,1)}|+|v^{(i,2)}|+\ldots+|v^{(i,\kappa(i))}|+|v^{(i,\kappa(i)+1)}|}{|B^{(i)}|}\le\frac{|B^{(i)}|-|P|}{|B^{(i)}|}\le 2\eps.
\end{equation}
(Actually, \eqref{ineq:|v_ij|} tells us that $v^{(i,j)}$ is an empty word for many $j$'s.)





We claim that if for each $i\geq 1$ we find blocks $\hat{w}^{(i,j)}\in\lang(Y)$ ($j=1,\ldots,\kappa(i)$) with $|\hat{w}^{(i,j)}|\ge s$ for each $j$ and these blocks will satisfy
\begin{equation}\label{eq:|AB|}
|A^{(i)}B^{(i)}|=|\hat{w}^{(i,1)}\ldots \hat{w}^{(i,\kappa(i))}|
\end{equation}
and
\begin{equation}\label{eq:Ham-hat_w}
\dHam\left(A^{(i)}B^{(i)},\hat{w}^{(i,1)}\ldots \hat{w}^{(i,\kappa(i))}\right)\le \frac{|A^{(i)}|+2\eps|B^{(i)}|+|v^{(i,1)}|+\ldots+|v^{(i,\kappa(i)+1)}|+s}{|A^{(i)}B^{(i)}|},
\end{equation}
then the proof will be complete. 
Indeed, assume that for each $i\ge 1$ we have found blocks $\hat{w}^{(i,1)},\ldots,\hat{w}^{(i,\kappa(i))}$ satisfying \eqref{eq:|AB|} and \eqref{eq:Ham-hat_w}, each of them of length at least $s$. We set $\hat{y}$ to be infinite concatenation of $\hat{w}^{(i,1)},\ldots,\hat{w}^{(i,\kappa(i))}$, where $i=1,2,\ldots$, that is
\[
\hat{y}=\hat{w}^{(1,1)}\ldots\hat{w}^{(1,\kappa(1))}\hat{w}^{(2,1)}\ldots\hat{w}^{(2,\kappa(2))}\ldots\ldots\ldots\hat{w}^{(i,1)}\ldots\hat{w}^{(i,\kappa(i))}\ldots
\]
By \eqref{decompositionofx}, \eqref{ineq:dbar-|A|}, \eqref{ineq:|v_ij|}, \eqref{eq:|AB|}, and \eqref{eq:Ham-hat_w} such $\hat{y}$ satisfies
\begin{equation}\label{ineq:hat-y-to-x}
\dbar(x, \hat{y})\le 6\eps.
\end{equation}
Note that for every $i\ge 1$ and for every $1\le j\le\kappa(i)$ we have $\hat{w}^{(i,j)}\in\lang(Y)$ and  $|\hat{w}^{(i,j)}|\ge s$, so the $\dbar$-shadowing property guarantees there is $y\in Y$ with
\begin{equation}\label{ineq:y-y-hat}
  \dbar(y,\hat{y})<\eps.
\end{equation}
By \eqref{ineq:hat-y-to-x} and \eqref{ineq:y-y-hat} we have $\dbar(x,y)<7\eps$ as needed. 

It remains to find appropriate $\hat{w}^{(i,1)},\ldots,\hat{w}^{(i,\kappa(i))}$ for each $i\ge 1$. To this end we fix $i\ge 1$ and for each $2\le j\le\kappa(i)$ we take $u^{(i,j)}$ in equation \eqref{decompositionofB_i} to find $w^{(i,j)}=w^{(i,j)}(u^{(i,j)})\in \lang_s(Y)$ with $\dHam\left(u^{(i,j)},w^{(i,j)}\right)\leq \eps$. Now, for $j=1$ we 
set $t(i)=|A^{(i)}|+|v^{(i,1)}|+|w^{(i,1)}|+|v^{(i,2)}|\ge s$ and we pick any $\hat{w}^{(i,1)}\in \lang_{t(i)}(Y)$. 
For $2\le j\le \kappa(i)$ we simply extend each $w^{(i,j)}$ to a word  $\hat{w}^{(i,j)}=w^{(i,j)}\hat{v}^{(i,j+1)}\in \lang_{|w^{(i,j)}|+|v^{(i,j+1)}|}(Y)$, where $|\hat{v}^{(i,j+1)}|=|v^{(i,j+1)}|$. 
We clearly have $|\hat{w}^{(i,j)}|\ge |w^{(i,j)}|=s$ and  
\[
\dHam(\hat{w}^{(i,j)},u^{(i,j)}v^{(i,j+1)})\le \frac{|v^{(i,j+1)}|+\eps|w^{(i,j)}|}{|\hat{w}^{(i,j)}|}.
\]
It is now straightforward to see that the blocks $\hat{w}^{(i,1)},\ldots,\hat{w}^{(i,\kappa(i))}$ satisfy \eqref{eq:|AB|} and \eqref{eq:Ham-hat_w}.
To finish the proof reverse the roles of $X$ and $Y$. \end{proof}

\subsection{Examples}
In this subsection, we explore what happens if we abandon the assumption of $\dbar$-shadowing.
First, we prove  Proposition \ref{prop:no_simplex} showing that the $\Hdbarm$ limit of a sequence of simplices of invariant measures need not be a simplex of all invariant measure of some subshift. In particular, the shift spaces $X_k$ consisting of a single periodic orbit that have been constructed in the course of the proof of Proposition \ref{prop:no_simplex} do not converge in $\Hdbar$-distance to a shift space, as their convergence to a shift $X$ would imply the limit of the corresponding simplices would be $\Ms(X)$, see \cite{KKK2}.

\begin{prop}\label{prop:no_simplex}
For every alphabet $\alf$ there exists a sequence of transitive finite shifts 
$(X_k)_{k=1}^\infty$ such that
for some ergodic fully supported measure $\mu\in\Mse(\FS)$ we have
\[\Hdbarm(\Ms(X_k),\{\mu\})\to 0\quad\text{as } k\to \infty. \]
In particular, there does not exist a shift space $X$ such that $\{\mu\}=\Ms(X)$.
\end{prop}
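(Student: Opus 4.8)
The plan is to build, for each $k\ge 1$, a single periodic orbit $X_k=\Orb(p_k)$ whose unique invariant measure $\mu_k$ is $\dbarm$-close to a fixed ergodic fully supported measure $\mu$ on $\FS$, with the distances going to $0$. Since $\Ms(X_k)=\{\mu_k\}$ is a singleton, $\Hdbarm(\Ms(X_k),\{\mu\})=\dbarm(\mu_k,\mu)$, so it suffices to approximate $\mu$ in Ornstein's metric by periodic measures. Once this is done, the last assertion is immediate: if $\{\mu\}=\Ms(X)$ for some shift space $X$, then $\mu$ would be the unique invariant measure of $X$, hence ergodic; but a $\dbarm$-limit of periodic measures whose (primary) periods tend to infinity is isomorphic to a Haar measure on an odometer (the result cited in the excerpt after \cite{BKPLR}), and such a measure has zero entropy and is not weakly mixing, contradicting that $\mu$ is, say, Bernoulli/fully supported with positive entropy. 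So the first step is to \emph{choose} $\mu$ concretely — e.g. take $\mu$ to be the $(1/|\alf|,\ldots,1/|\alf|)$-Bernoulli measure on $\FS$, which is ergodic and fully supported — and then the only real content is the approximation.

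The approximation itself is the classical Shields-type construction: pick a $\mu$-generic point $x\in\FS$; by the ergodic theorem the empirical block-frequencies $\freq(w,x_{[0,n)})$ converge to $\mu([w])$ for every word $w$, so for each $k$ we may choose $n_k$ with $n_k\to\infty$ such that the prefix $b_k:=x_{[0,n_k)}$ satisfies $\sum_{|w|\le k}|\freq(w,b_k)-\mu([w])|<1/k$ and moreover $b_k$ can be taken to be a word that occurs infinitely often in $x$ so that $b_k\in\lang(X_k)$ trivially once we set $p_k=(b_k)^\infty$ and $X_k=\Orb(p_k)$; transitivity of $X_k$ is automatic since it is a single periodic orbit. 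The unique invariant measure $\mu_k$ on $X_k$ is the uniform average over the shifts of $p_k$. One then estimates $\dbarm(\mu_k,\mu)$: the $\dbarm$-distance between the periodic measure generated by a long word $b$ and $\mu$ is controlled by how well $b$'s statistics match $\mu$'s statistics on blocks of a fixed length, together with a boundary term of size $O(\text{(block length)}/n_k)$ coming from the wrap-around of the periodic orbit — this is exactly \cite[Lemma I.9.12]{Shields} / the $d^*_n$-machinery already invoked in the proof of Theorem~\ref{thm:dbarshad-dbarm-dbar}, combined with the standard fact that agreement of all block-frequencies up to length $N$ forces $\dbarm<\eps_N$ with $\eps_N\to 0$. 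Choosing $n_k$ large enough relative to $k$ (so that the boundary term is also $<1/k$) yields $\dbarm(\mu_k,\mu)\to 0$, hence $\Hdbarm(\Ms(X_k),\{\mu\})\to 0$.

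To spell out the key steps in order: (1) fix $\mu$ to be the uniform Bernoulli measure on $\FS$ and a $\mu$-generic point $x$; (2) using the ergodic theorem, for each $k$ select a prefix $b_k$ of $x$ that is long, has block-statistics $1/k$-close to $\mu$ on all words of length $\le k$, and (after possibly shifting $x$) occurs in $x$, and set $X_k=\Orb\big((b_k)^\infty\big)$, a transitive finite shift with $\Ms(X_k)=\{\mu_k\}$; (3) estimate $\dbarm(\mu_k,\mu)$ via the $d^*_n$-joining criterion plus a wrap-around boundary term, and choose $|b_k|$ large enough that this is $<2/k$, giving $\Hdbarm(\Ms(X_k),\{\mu\})=\dbarm(\mu_k,\mu)\to 0$; (4) deduce the non-existence of $X$ with $\Ms(X)=\{\mu\}$: such $\mu$ would be ergodic, but as the $\dbarm$-limit of periodic measures with periods $\to\infty$ it would be isomorphic to an odometer Haar measure, contradicting positivity of entropy (or weak mixing) of $\mu$.

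The main obstacle is purely bookkeeping: making the $\dbarm$-estimate in step (3) clean, in particular handling the periodic wrap-around so that the error is genuinely $O(|b_k|^{-1}\cdot(\text{length of blocks tested}))$ and not something worse, and making sure the cited Shields lemmas are applied with the right quantifiers (matching block-frequencies up to length $N$ only gives $\dbarm$-closeness with an error depending on $N$, so $k$, the block length $|b_k|$, and the tested length $N$ must be interleaved correctly). There is no conceptual difficulty — this is the symbolic analogue of the elementary fact that periodic orbits are $\dbarm$-dense in the ergodic measures of the full shift — but the explicit constants require care.
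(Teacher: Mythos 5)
Your proposal has a fatal gap at its core: the approximation in steps (1)--(3) is impossible. You cannot have $\dbarm(\mu_k,\mu)\to 0$ with each $\mu_k$ a periodic-orbit measure and $\mu$ the uniform Bernoulli measure, because the entropy function is continuous on $(\Ms(\FS),\dbarm)$ (as stated in the paper), periodic measures have entropy zero, and the Bernoulli measure has entropy $\log|\alf|>0$. Indeed, the very fact you invoke in step (4) --- that a $\dbarm$-limit of periodic measures with periods tending to infinity is isomorphic to a Haar measure on an odometer --- already rules out your own construction: that conclusion follows from steps (1)--(3) alone, independently of whether some $X$ satisfies $\Ms(X)=\{\mu\}$, so what you produce in step (4) is not a contradiction with the assumption on $X$ but an internal contradiction in the proposal. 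The specific false step is the claimed ``standard fact that agreement of all block-frequencies up to length $N$ forces $\dbarm<\eps_N$ with $\eps_N\to 0$'': matching block frequencies up to a fixed length only controls the weak$^*$ distance, and $\dbarm$ is strictly stronger (non-separable, entropy-continuous). The Shields $d^*_n$ machinery goes the useful way only in the direction ``$\dbarm$ small $\implies$ $d^*_n$ small for all $n$''; the converse needs uniformity over \emph{all} $n$, which your truncated frequency matching does not provide.

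The paper's actual proof keeps your overall skeleton (periodic orbits $X_k$, so $\Hdbarm(\Ms(X_k),\{\mu\})=\dbarm(\mu_k,\mu)$) but abandons the idea of prescribing $\mu$ in advance. Instead it builds the periods recursively, $V_{k+1}=V_k^{a_{k+1}}W_{k+1}1^{b_{k+1}}$, so that consecutive periodic points satisfy $\dbar(x^{(k+1)},x^{(k)})\le\delta_{k+1}$ with $\sum\delta_k<1/2$; completeness of $\dbar$ then yields a limit point $x$ generic for an ergodic measure $\mu$ which is the $\dbarm$-limit of the $\mu_k$. This $\mu$ is necessarily a zero-entropy, odometer-type measure --- not Bernoulli --- but the construction inserts every word $W_k$ with positive frequency that survives to the limit, so $\mu$ is fully supported. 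The final deduction is then elementary and does not use the odometer result at all: since $\mu$ charges every cylinder, the only shift space $X$ with $\mu\in\Ms(X)$ is $\FS$ itself, and $\Ms(\FS)$ is not a singleton. To repair your write-up you would need to replace the choice of $\mu$ and the frequency-matching argument by some such recursive construction guaranteeing genuine $\dbarm$-Cauchyness; the rest of your framing (singleton simplices, reduction of $\Hdbarm$ to $\dbarm$) is fine.
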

\begin{proof}
We order the nonempty words over $\alf$ into a sequence $(W_k)_{k=0}^\infty$. Let $(\delta_k)_{k=1}^\infty$ be a sequence of positive reals such that \begin{equation}\label{sum-conv}
  \sum_{k=1}^\infty \delta_k<1/2.
\end{equation} We inductively define words $(V_k)_{k=0}^\infty$ by
\[V_0=W_0,\qquad V_{k+1}=V_k^{a_{k+1}}W_{k+1}1^{b_{k+1}}\quad\text{for }k\ge 0\]
in such a way that 
$b_{k+1}\ge 0$ is the smallest number such that $|W_{k+1}|1^{b_{k+1}}$ is a multiple of $|V_k|$ and
$a_{k+1}\ge 1$ is chosen so that the following inequality holds true
\begin{equation}\label{ineq:vk}\frac{|W_{k+1}|+b_{k+1}}{|V_{k+1}|}<\delta_{k+1}.\end{equation}
This implies that
$c_k=|V_{k+1}|/|V_k|$ is a positive integer. For $k\ge 1$, let $x^{(k)}=V_k^{\infty}\in\FS$ be a periodic point, $X_k$ be its orbit, and $\mu_k$ be the unique ergodic measure of the shift space $X_k$. 

Note that $\mu_k[W_k]\ge 1/|V_{k}|>0$ and for every $n>k$ we have
\begin{equation}\label{ineq:prod}
\mu_n[W_k]\ge \frac{1}{|V_{k}|}\left((1-\delta_{k+1})(1-\delta_{k+2})\cdot\ldots\cdot(1-\delta_n))\right)>0,
\end{equation}
because the infinite product $(1-\delta_{k+1})(1-\delta_{k+2})\cdot\ldots\cdot(1-\delta_n)\ldots$ converges to a non-zero limit by \eqref{sum-conv}.
  We constructed the words $V_k$ in such a way that $|V_{k+1}|$ is a multiple of $|V_k|$ and that $V_k^{a_{k+1}}$ is a prefix of $V_{k+1}$ for every $k\ge 0$, hence using \eqref{ineq:vk} we see that for every $k\ge q$ we have  \[\dbar(x^{(k+1)},x^{(k)})=
  \dHam(V_k^{a_{k+1}}W_{k+1}1^{b_{k+1}},V_k^{c_k})\le \delta_{k+1}.
  \] 

It follows that $x^{(k)}$ is a Cauchy sequence in $\dbar$ pseudometric. Since $\dbar$ is a complete pseudometric the sequence $x^{(k)}$ converges, so there is $x\in\FS$ such that
\[
\lim_{k\to\infty} \dbar(x,x^{(k)})=0.\]
This point $x$ must be then generic for an ergodic shift invariant measure $\mu$ such that $\mu$ is $\dbarm$-limit of the measures $\mu_k$. Since $\dbarm$-convergence implies weak$^*$ convergence, the portmanteau theorem and \eqref{ineq:prod} imply that
\[
\mu[W_k]=\lim_{n\to\infty} \mu_n[W_k]\ge \frac{1}{|V_{k}|}\prod_{n=1}^{\infty}(1-\delta_{n})>0.
\]
Since $\mu[W_k]>0$ for every $k\ge 0$, the only subshift $X$ such that $\mu\in\Ms(X)$ is the full shift. On the other hand, $\Ms(X_k)=\{\mu_k\}$ and $\Hdbarm(\Ms(X_k),\{\mu\})=\dbar(\mu_k,\mu)\to 0$ as $k\to\infty$. We see that $\Ms(X_k)=\{\mu_k\}$ converge to $\{\mu\}$ in  $\Hdbarm$ but there is no subshift $X$ of $\FS$ such that $\Ms(X)=\{\mu\}$.
\end{proof}

Our next goal is to show an example of a sequence $(X_k)_{k=1}^\infty$ of shift spaces such that the $\Hdbarm$-limit of simplices $\Ms(X_k)$ exists and is a simplex of invariant measures of some shift space $X$, but the shift spaces $X_k$ do not converge to $X$ with respect to $\Hdbar$ pseudometric.

To find our examples we will adapt the construction of one-sided Oxtoby's sequences. The original  Oxtoby's sequence generates a minimal non-uniquely ergodic Toeplitz subshift, see \cite{Downarowicz, Oxtoby,Wil}. As parameters of this construction we need a sequence of positive integers $(p_k)_{k=0}^\infty$.

\begin{defn}\label{def:Oxtoby}
Let $\mathbf{p}=(p_k)_{k=0}^\infty$ be a sequence of positive integers such that $p_0=1$, and for each $k\ge 0$ we have that $p_k$  divides $p_{k+1}$ and $p_{k+1}/p_k\ge 3$. Let $M_0=\emptyset$, and for $k\ge 1$ define $M_k=([-p_k,p_k)+p_{k+1}\N)\cap\N$. Note that for every $i\in \N$ there exists a unique $k=k(i)\ge 1$ such that $i\in M_{k}\setminus \bigcup^{k-1}_{\ell=0} M_\ell$. We define the \emph{Oxtoby sequence with the scale $\mathbf{p}$} to be a binary sequence $x(\mathbf{p})\in\{0,1\}^\infty$ such that 
$x(\mathbf{p})_i=k(i) \bmod 2$.
\end{defn}

By Lemma 3.2 in \cite{Wil}, if 
$x(\mathbf{p})\in\{0,1\}^\infty$ is an Oxtoby sequence with scale $\mathbf{p}$ satisfying
\[
\sum^\infty_{k=0}\frac{p_{k}}{p_{k+1}}<\infty,
\]
then the orbit closure of $x(\mathbf{p})$ in $\{0,1\}^\infty$ is a minimal shift space $X(\mathbf{p})$ with exactly two ergodic invariant measures.

\begin{prop}\label{prop:diff-limits}
If $\mathbf{p}=(p_k)_{k\in\N}$ is a sequence of positive integers satisfying
\begin{equation}\label{cond:scale}\sum^\infty_{k=1}\frac{2p_{k}}{p_{k+1}}<\delta,\end{equation}
for some $0<\delta<1/2$, then the minimal shift $X(\mathbf{p})$ obtained as the orbit closure of the Oxtoby sequence with scale $\mathbf{p}$ satisfies
$$\Hdbar(X(\mathbf{p}),\{0^{\infty},1^{\infty}\})>1-\delta,\qquad\text{and}\qquad \Hdbarm(\Ms(X(\mathbf{p})),\Ms(\{0^{\infty},1^{\infty}\}))<\delta.$$
\end{prop}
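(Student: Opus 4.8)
The plan is to prove the two estimates separately, treating the measure-theoretic one as essentially elementary and concentrating effort on the $\Hdbar$ lower bound.

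\smallskip

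\emph{The $\Hdbarm$ estimate.} The shift space $\{0^\infty,1^\infty\}$ has exactly two invariant (in fact ergodic) measures, $\delta_{0^\infty}$ and $\delta_{1^\infty}$, and $\Ms(\{0^\infty,1^\infty\})$ is the segment joining them. The minimal shift $X(\mathbf p)$ has exactly two ergodic measures $\mu_0,\mu_1$ by Lemma 3.2 of \cite{Wil}, and $\Ms(X(\mathbf p))$ is the segment joining them. Since $\Hdbarm$ between two segments is governed by the distances between endpoints (in appropriate pairing), it suffices to show $\dbarm(\mu_j,\delta_{j^\infty})<\delta$ for $j=0,1$. For this I would go back to the Oxtoby construction: the coordinate $x(\mathbf p)_i$ equals $k(i)\bmod 2$, and the set of coordinates where $k(i)$ is even (resp.\ odd) has density controlled by the tail sums $\sum p_k/p_{k+1}$. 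More precisely, the ``level-$k$'' coordinates $M_k\setminus\bigcup_{\ell<k}M_\ell$ occupy asymptotic density roughly $2p_k/p_{k+1}$ inside each period $p_{k+1}$, so the frequency of $1$'s in $x(\mathbf p)$ (coordinates with odd level) is at most $\sum_{k\text{ odd}} 2p_k/p_{k+1}<\delta$; similarly for generic points of $\mu_1$ the frequency of $0$'s is $<\delta$. A generic point $z$ of $\mu_0$ therefore differs from $0^\infty$ in a set of coordinates of upper density $<\delta$, so $\dbar(z,0^\infty)<\delta$, and by \cite[Thm.\ 15.23]{Glasner} this gives $\dbarm(\mu_0,\delta_{0^\infty})<\delta$; symmetrically for $\mu_1$. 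Hence $\Hdbarm(\Ms(X(\mathbf p)),\Ms(\{0^\infty,1^\infty\}))<\delta$.

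\smallskip

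\emph{The $\Hdbar$ estimate.} Here I would bound $\Hdbar$ from below by exhibiting a point $x\in X(\mathbf p)$ that is $\dbar$-far from both $0^\infty$ and $1^\infty$; since $\Hdbar(X(\mathbf p),\{0^\infty,1^\infty\})\ge \dbar(x,\{0^\infty,1^\infty\})$, it is enough to find $x$ with $\dbar(x,0^\infty)>1-\delta$ \emph{and} $\dbar(x,1^\infty)>1-\delta$. The natural candidate is a point which, in the $\limsup$ defining $\dbar$, looks mostly like $1^\infty$ along one sparse sequence of window lengths and mostly like $0^\infty$ along another. The Oxtoby sequence itself is a good starting point: inside a block of length $p_{k+1}$ the coordinate values are $k\bmod 2$ on a set of density close to $1$ (the complement being the union of the lower-level sets, of total density $<\sum_{\ell\le k}2p_\ell/p_{\ell+1}<\delta$). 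So along windows $[0,p_{k+1})$ with $k$ even the sequence $x(\mathbf p)$ agrees with $0^\infty$ on a $(1-\delta)$-fraction, i.e.\ $\dbar(x(\mathbf p),1^\infty)\ge 1-\delta$; and along windows with $k$ odd it agrees with $1^\infty$ on a $(1-\delta)$-fraction, i.e.\ $\dbar(x(\mathbf p),0^\infty)\ge 1-\delta$. Taking $\limsup$ over the appropriate subsequences of window lengths yields $\dbar(x(\mathbf p),0^\infty)>1-\delta$ and $\dbar(x(\mathbf p),1^\infty)>1-\delta$ simultaneously, hence $\dbar(x(\mathbf p),\{0^\infty,1^\infty\})>1-\delta$ and therefore $\Hdbar(X(\mathbf p),\{0^\infty,1^\infty\})>1-\delta$.

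\smallskip

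\emph{Main obstacle.} The routine but delicate part is the density bookkeeping in the Oxtoby construction: one must verify that inside a window of length a multiple of $p_{k+1}$ the ``top level'' $M_k\setminus\bigcup_{\ell<k}M_\ell$ really has density close to $2p_k/p_{k+1}$ and that \emph{all} lower levels together contribute density $<\delta$ by \eqref{cond:scale}, uniformly enough to survive the $\limsup$. One also has to be slightly careful that $\limsup$ (not $\liminf$) is what appears in $\dbar$, so that restricting to the subsequence of window lengths of the form $p_{k+1}$ with $k$ of a fixed parity is legitimate for the lower bounds; this is exactly where the $\limsup$ works in our favour. Finally, for the $\Hdbarm$ direction one should check that the two endpoint estimates genuinely control the Hausdorff $\dbarm$-distance between the two one-dimensional simplices — this is where the fact that $\dbarm$ restricted to segments through fixed endpoints behaves affinely (equivalently, that $\dbarm(t\mu_0+(1-t)\mu_1,\, t\delta_{0^\infty}+(1-t)\delta_{1^\infty})\le \max_j\dbarm(\mu_j,\delta_{j^\infty})$ by joining the components) is used.
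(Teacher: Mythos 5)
Your overall strategy is the same as the paper's, and your $\Hdbar$ lower bound is essentially identical to the paper's argument and correct: along windows $[0,p_{k+1})$ the sequence $x(\mathbf p)$ is constant equal to $(k+1)\bmod 2$ off a set of relative density at most $\sum_{\ell\le k}2p_\ell/p_{\ell+1}<\delta$, and since $\dbar$ is a $\limsup$ you may evaluate it along the two parity subsequences of window lengths to get $\dbar(x(\mathbf p),0^\infty)>1-\delta$ and $\dbar(x(\mathbf p),1^\infty)>1-\delta$ simultaneously.

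The $\Hdbarm$ half, however, has a genuine gap. You justify $\mu_0([1])<\delta$ by claiming that ``the frequency of $1$'s in $x(\mathbf p)$ is at most $\sum_{k\text{ odd}}2p_k/p_{k+1}<\delta$.'' This is false: upper density is not countably additive over the disjoint level sets $M_k\setminus\bigcup_{\ell<k}M_\ell$, and indeed your own $\Hdbar$ argument shows that along the windows $[0,p_{k+1})$ with $k$ odd the density of $1$'s is at least $1-\delta>1/2$, so the upper density of $1$'s in $x(\mathbf p)$ is at least $1-\delta$, not at most $\delta$. The point $x(\mathbf p)$ is not generic (the system is not uniquely ergodic), so no single ``frequency of $1$'s'' exists, and you cannot read off the values $\mu_j([1])$ of the two ergodic measures directly from it. What the paper does instead is: generate invariant measures $\mu$ and $\nu$ along the two parity subsequences of window lengths, obtaining $\mu([0])>1-\delta$ and $\nu([1])>1-\delta$ for some (a priori non-ergodic) $\mu,\nu\in\Ms(X(\mathbf p))$; then decompose each ergodically over the two ergodic measures $\mu',\nu'$ and use $\delta<1/2$ to conclude that one ergodic measure satisfies $\mu'([0])>1-\delta$ and the other $\nu'([1])>1-\delta$. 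Only then does the generic-point argument give $\dbarm(\mu',\dirac_{0^\infty})<\delta$ and $\dbarm(\nu',\dirac_{1^\infty})<\delta$. Note that your write-up never uses the hypothesis $\delta<1/2$; it is exactly this ergodic-decomposition step that requires it, which is a sign the step cannot be skipped. Your final reduction from ergodic measures to the full simplices (via convex combinations of joinings) is a valid alternative to the paper's citation of the identity $\Hdbarm(\Ms(\cdot),\Ms(\cdot))=\Hdbarm(\Mse(\cdot),\Mse(\cdot))$ from \cite{KKK2}, so that part is fine once the endpoint estimates are properly established.
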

\begin{proof}
Fix $0<\delta<1$ and a sequence of positive integers $\mathbf{p}$ as above. For simplicity we write $x$ for the Oxtoby sequence $x(\mathbf{p})$ defined taking $\mathbf{p}$ as its scale and $X$ for the associated minimal subshift $X(\mathbf{p})$ (see Definition \ref{def:Oxtoby}). By Lemma 3.2 in \cite{Wil},  $\Mse(X)=\{\mu',\nu'\}$. Let $(M_k)_{k=1}^\infty$ be a sequence of sets as in Definition \ref{def:Oxtoby}.
Fix $k\ge 1$ and consider the prefix $x_{[0,p_{k+1})}$. Since $p_{k+1}$ is a multiple of $p_\ell$ for every $\ell\le k$, using the structure of the sets $M_k$ for $k\ge \ell$ we get that
\[\left|M_\ell\cap [0,p_{k+1})\right|=\frac{p_{k+1}}{p_{\ell+1}}\cdot 2 p_{\ell}.\]
  Hence, 
\[\frac{\left|(\bigcup^{k}_{\ell=0} M_\ell)\cap [0,p_{k+1})\right|}{p_{k+1}}\le \sum^{k}_{\ell=0}\frac{2p_{\ell}}{p_{\ell+1}}\le \delta.\]
But for every $i\in [0,p_{k+1})\setminus \bigcup^{k}_{\ell=0} M_\ell$ we have $x_i={k+1} \bmod 2$. In other words, the Oxtoby sequence is constant for all indices $i$ in $[0,p_{k+1})\setminus \bigcup^{k}_{\ell=0} M_\ell$ with the constant depending only on the parity of $k$.
Since
\[\frac{\left|([0,p_{k+1})\cap\N \setminus \bigcup^{k}_{\ell=0} M_\ell) \right|}{p_{k+1}}\ge
1-\delta,\]
we see that for both $\alpha=0$ and $\alpha=1$ we have
\[
\limsup_{k\to\infty}\frac{\left|\{0\le i < p_{k+1}:x_i=\alpha\}\right|}{p_{k+1}}\ge
1-\delta.
\]
Hence, for some invariant measure $\mu,\nu\in\Ms(X)$ we have $\mu([0])>1-\delta$ and $\nu([1])>1-\delta$.
By ergodic decomposition, these measures are convex combinations of the ergodic measures $\mu'$ and $\nu'$. 
This implies that for one ergodic measure, say $\mu'$, we have $\mu'([0])>1-\delta$, while for the other one, $\nu'([1])>1-\delta$. 

  A generic point for $\mu'$ has density of ones at most $\delta$, so it is at most $\delta$ far away from $0^\infty$. Hence $\dbarm(\mu',\dirac_{0^\infty})<\delta$. Similarly, $\dbarm(\nu',\dirac_{1^\infty})<\delta$. Therefore the $\Hdbarm$-distance between sets of ergodic measures on $X$ and $\{0^{\infty},1^{\infty}\}$ is bounded by $\delta$. By \cite[Lemma 14]{KKK2} (see \eqref{hdbar-ineq} in the introduction) we have $\Hdbarm(\Ms(X(\mathbf{p})),\Ms(\{0^{\infty},1^{\infty}\}))=\Hdbarm(\Mse(X(\mathbf{p})),\Mse(\{0^{\infty},1^{\infty}\}))<\delta$.
  On the other hand, by the above calculations, we see that the Oxtoby sequence $x$ satisfies
  $\dbar(x,1^\infty)>1-\delta$ and $\dbar(x,0^{\infty})>1-\delta$, which means that  $\Hdbar(X(\mathbf{p}),\{0^{\infty},1^{\infty}\})>1-\delta$.
\end{proof}

\begin{cor}
There exists a sequence $(X_k)_{k=1}^\infty$ of minimal shift spaces such that for some shift space $X$ we have
$\Hdbarm(\Ms(X_n),\Ms(X))\to 0$ while $\Hdbar(X_n,X)\to 1$ as $n\to\infty$.
\end{cor}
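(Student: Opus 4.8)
The plan is to obtain the corollary as an immediate consequence of Proposition \ref{prop:diff-limits}, together with the inequality \eqref{hdbar-ineq} and a choice of parameters tending to the appropriate extremes. First I would pick a sequence $\delta_k\to 0$ with $0<\delta_k<1/2$ for each $k$, and for every $k$ invoke Proposition \ref{prop:diff-limits} with $\delta=\delta_k$: this requires choosing a scale $\mathbf{p}^{(k)}=(p^{(k)}_j)_{j\ge 1}$ of positive integers with $p_0^{(k)}=1$, each $p^{(k)}_j\mid p^{(k)}_{j+1}$, $p^{(k)}_{j+1}/p^{(k)}_j\ge 3$, and $\sum_{j\ge 1} 2p^{(k)}_j/p^{(k)}_{j+1}<\delta_k$; for instance $p^{(k)}_{j+1}/p^{(k)}_j$ growing fast enough (say $p^{(k)}_j = (N_k)^j$ with $N_k$ large) makes the geometric-type tail as small as we wish. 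Setting $X_k=X(\mathbf{p}^{(k)})$ gives minimal shift spaces, and Proposition \ref{prop:diff-limits} yields
\[
\Hdbarm\bigl(\Ms(X_k),\Ms(\{0^\infty,1^\infty\})\bigr)<\delta_k \quad\text{and}\quad \Hdbar\bigl(X_k,\{0^\infty,1^\infty\}\bigr)>1-\delta_k .
\]

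Next I would set $X=\{0^\infty,1^\infty\}$, the binary shift of finite type with forbidden words $01$ and $10$. Letting $k\to\infty$, the first inequality gives $\Hdbarm(\Ms(X_k),\Ms(X))\to 0$. For the second, $\Hdbar$ is a pseudometric bounded by $1$ on $\CL(\FS,\dbar)$ (since $\dbar\le 1$), so $1-\delta_k<\Hdbar(X_k,X)\le 1$ forces $\Hdbar(X_k,X)\to 1$. This is exactly the assertion of the corollary, with the $X_k$ minimal by construction.

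There is essentially no obstacle here: the corollary is a repackaging of Proposition \ref{prop:diff-limits} with $\delta\to 0$. The only point that deserves a word is the existence of admissible scales $\mathbf{p}^{(k)}$ achieving \eqref{cond:scale} for arbitrarily small $\delta$, which is clear since for $p_j=N^j$ one has $\sum_{j\ge1} 2p_j/p_{j+1}=\sum_{j\ge1}2/N = \infty$ — so one should instead take, e.g., $p_{j+1}=p_j\cdot 3^{j+1}$ or more simply any scale with $p_j/p_{j+1}$ summable and with sum $<\delta_k/2$, which is plainly arrangeable (the partial sums of a convergent series of our choosing can be made as small as we like by accelerating the growth). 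I would phrase the proof in two or three sentences: choose $\delta_k\downarrow 0$, apply Proposition \ref{prop:diff-limits} to get $X_k=X(\mathbf{p}^{(k)})$, take $X=\{0^\infty,1^\infty\}$, and read off the two limits, using $\Hdbar\le 1$ for the second.
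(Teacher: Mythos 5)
Your proposal is correct and follows exactly the paper's own proof: take $X=\{0^\infty,1^\infty\}$ and apply Proposition \ref{prop:diff-limits} to a sequence of Oxtoby scales with $\delta_k\downarrow 0$. Your extra remark about which scales actually satisfy \eqref{cond:scale} (and the self-correction that $p_j=N^j$ fails while super-geometric growth works) is a sensible sanity check but adds nothing beyond what the paper leaves implicit.
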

\begin{proof}
  One can take $X=\{0^{\infty},1^{\infty}\}$ and sequence of minimal shift spaces $X_n$ generated by Oxtoby sequences $x^{(n)}$ constructed in the previous proposition for a sequence of $\delta$'s going to zero.
\end{proof}

\subsection{On $\dbar$-shadowing on the measure center of $X$}
Let us recall, that the measure center $X^+$ of a shift space $X$ is the smallest subshift of $X$ containing supports of all invariant measures on $X$, that is, $X^+$ is the smallest closed set such that $\mu(X^+)=1$ for every $\mu\in\Ms(X)$.


Fix a word  $u$ over $\alf$ with $|u|\geq k$. Given a word $w$ over $\alf$ we define $\gamma_w(u)$ to be the number of occurrences of $w$ in $u$, that is,
\begin{equation*}
    \gamma_w(u) = \big| \left\{ 1\leq j \leq |u|-k+1 \mid u_ju_{j+1}\ldots u_{j+k-1}=w \right\} \big|.
\end{equation*}
Furthermore, for $n\in\N$ with $n\ge |w|$ we set $\Gamma_w(n)$ to be largest number of occurrences of $w$ among all words $u$ of length $n$, that is
\begin{equation*}
    \Gamma_w(n)= \max \{ \gamma_w(u) \mid u\in\lang_n(X) \}.
\end{equation*}
It is a straightforward consequence of the definition that $\Gamma_w(uv)\le\Gamma_w(u)+\Gamma_w(v)+|w|-1$ for every $u,v,w\in\alf^*$. Thus
\begin{equation}
    \Gamma_w(n+m)\leq \Gamma_w(n)+\Gamma_w(m)+|w|-1
\end{equation}
for all $n,m$.

%
We define the maximum limiting frequency of $w$ in $X$ as
\begin{equation}
    \Lambda_X(w)=\lim_{n \rightarrow\infty} \dfrac{1}{n}\Gamma_w(n).
\end{equation}
The existence of the limit follows from
the subadditivity of the function $\Gamma'_w(n)=\Gamma_w(n)+|w|-1$ and the fact that the difference between ratios $\Gamma_w(n)/n$ and $\Gamma'_w(n)/n$ goes to zero.

It is known that
\begin{equation}
    \Lambda_X(w)=\max_{\mu\in\Ms(X)}\mu[w]=\max_{\nu\in\Mse(X)}\nu[w],
\end{equation} see \cite[Chap. 3]{Furstenberg}.
%
This means that $w\in \lang(X)\setminus\lang(X^+)$ if and only if $\Lambda_X(w)=0$, that is, for every $\eps > 0$ there exists $N\in\N$ such that for all $n\geq N$ for all $u\in\lang_n(X)$ we have $\gamma_w(u)\leq n\eps$.

\begin{thm}\label{thmonmeasurecenter}
If $X^+$ has the $\dbar$-shadowing property then so does $X$.
\end{thm}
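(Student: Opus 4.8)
The plan is to deduce the $\dbar$-shadowing property of $X$ from that of $X^{+}$ by replacing long words of $\lang(X)$ with concatenations of words of $\lang(X^{+})$ of prescribed minimal length, changing only a small-density set of coordinates. The key input is the characterisation recalled just above the statement: a word $z$ lies in $\lang(X)\setminus\lang(X^{+})$ if and only if $\Lambda_X(z)=0$, i.e.\ for every $\eta>0$ there is a threshold $N$ with $\gamma_z(u)\le\eta|u|$ for all $u\in\lang_n(X)$ with $n\ge N$. What matters is that this bound is uniform over $\lang_n(X)$, hence applies in particular to every sufficiently long \emph{prefix} of a given long word of $\lang(X)$, since such prefixes again lie in $\lang(X)$.

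Fix $\eps>0$. Let $N'=N'(\eps/2)$ be the constant provided by the $\dbar$-shadowing property of $X^{+}$ and put $\ell=N'$. The set $\cB_\ell:=\lang_\ell(X)\setminus\lang_\ell(X^{+})\subseteq\alf^{\ell}$ is finite and, by the above, every $z\in\cB_\ell$ has $\Lambda_X(z)=0$; choosing a common small $\eta$ (possible as $\cB_\ell$ is finite) we get $N_1$ with $\sum_{z\in\cB_\ell}\gamma_z(u)\le\frac{\eps}{8\ell}|u|$ whenever $u\in\lang(X)$ and $|u|\ge N_1$. Put $N=\max\{N_1,\lceil 16\ell/\eps\rceil\}$. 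Now, for any $w\in\lang(X)$ with $|w|\ge N$, cut $w$ into consecutive blocks of length $\ell$, merging the final incomplete block into its predecessor so that every block has length in $[\ell,2\ell)$; then replace each length-$\ell$ block whose content lies in $\cB_\ell$ — and, to be safe, the single boundary block — by an arbitrary word of the same length from $\lang(X^{+})$ (which exists since $X^{+}$ is a nonempty subshift), leaving all other blocks untouched. Call the result $\tilde w$. Then $|\tilde w|=|w|$, and $\tilde w$ is a concatenation of words of $\lang(X^{+})$ of lengths $\ge\ell=N'$. A length-$\ell$ block is altered only if its content is in $\cB_\ell$, so the number of altered length-$\ell$ blocks is at most $\sum_{z\in\cB_\ell}\gamma_z(w)$; combining this with the displayed estimate (applied to $w$, and to each prefix $w_{[0,t)}$ with $t\ge N_1$), $w$ and $\tilde w$ differ on at most $\tfrac{\eps}{4}|w|$ coordinates in total, and on at most $\tfrac{\eps}{4}t+2\ell$ coordinates within any prefix of length $t\ge N_1$.

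Now take a sequence $(w^{(j)})_{j\ge1}$ in $\lang(X)$ with $|w^{(j)}|\ge N$, set $x=w^{(1)}w^{(2)}\cdots$ and let $\tilde x=\tilde w^{(1)}\tilde w^{(2)}\cdots$ be obtained by applying the block replacement to each $w^{(j)}$. Then $\tilde x$ is an infinite concatenation of words of $\lang(X^{+})$ of lengths $\ge N'$, so the $\dbar$-shadowing property of $X^{+}$ yields $y\in X^{+}\subseteq X$ with $\dbar(\tilde x,y)<\eps/2$. Let $D=\{i\in\mathbb{N}_{0}:x_i\neq\tilde x_i\}$. Splitting $[0,n)$ into the windows occupied by $w^{(1)},\dots,w^{(i_0-1)}$ (each contributing at most a $\tfrac{\eps}{4}$-fraction of its length to $D$, by the whole-word bound) and an initial segment of length $t$ of the window of $w^{(i_0)}$ (to which the prefix bound gives $\le\tfrac{\eps}{4}t+2\ell$ when $t\ge N_1$, and which is trivially $\le N_1$ otherwise), one gets $|D\cap[0,n)|\le\tfrac{\eps}{4}n+2\ell+N_1$, so $\dbar(x,\tilde x)=\limsup_{n}\tfrac1n|D\cap[0,n)|\le\eps/4$. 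Hence $\dbar(x,y)\le\dbar(x,\tilde x)+\dbar(\tilde x,y)<\eps$ with $y\in X$, which is exactly the $\dbar$-shadowing property of $X$.

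The only delicate point — and the reason the uniformity emphasised in the first paragraph is needed — is the density computation in the last step: a priori the altered coordinates could accumulate at the beginning of some very long word $w^{(i_0)}$ and spoil the $\limsup$ defining $\dbar(x,\tilde x)$. This is precisely ruled out by applying the estimate $\sum_{z\in\cB_\ell}\gamma_z(\cdot)\le\frac{\eps}{8\ell}|\cdot|$ to the prefixes of $w^{(i_0)}$, which is legitimate because these prefixes still belong to $\lang(X)$ and are long once we are past the fixed threshold $N_1$. Everything else is routine bookkeeping with the block decomposition.
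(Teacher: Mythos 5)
Your proof is correct and follows essentially the same route as the paper's: cut each $w^{(j)}$ into blocks of a fixed length $\ell\ge N'$, use $\Lambda_X(z)=0$ for $z\in\lang_\ell(X)\setminus\lang_\ell(X^+)$ to see that only an $\eps$-small fraction of blocks needs replacing by words of $\lang(X^+)$, and then invoke $\dbar$-shadowing of $X^+$ on the resulting concatenation. The only difference is that you spell out the prefix-level estimate needed to control the $\limsup$ in $\dbar(x,\tilde x)$ (ruling out altered coordinates accumulating at the start of a very long $w^{(i_0)}$), a point the paper's proof leaves implicit; this is a legitimate refinement, not a change of method.
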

\begin{proof}
Fix $\eps>0$. For $\eps/3$ we use the $\dbar$-shadowing property of $X^+$ to find $N_1$ such that for any sequence of words $\{ a^{(j)} \}_{j=1}^\infty$ in $\lang(X^+)$ with $|a^{(j)}|\geq N_1$ there exists $x\in X^+$ such that $\dbar\left(x,a^{(1)}a^{(2)}\ldots\right)<\eps/3$.

Now, fix $m\geq N_1$. For $w\notin \lang_m(X^+)$ let $N_w>0$ be such that for all $n\geq N_w$ for all $u\in\lang_n(X)$ we have
\begin{equation}\label{upperboundoncounting}
    \gamma_w(u)\leq \dfrac{n\eps}{|A|^m3m}.
\end{equation}
Set $N_0 = \max_{w\in\lang_m(X)}{N_w}$. We take $N\in\N$ such that ${m}/{N}<\eps/6$ and $N\geq \max\{N_0,N_1\}$.

Let us take any $j\geq 1$ and any $w^{(j)}\in\lang(X)$ such that $|w^{(j)}|\geq N$. Each $w^{(j)}$ can be written as a concatenation of finite blocks as follows;
\begin{equation*}
    w^{(j)}=u_1^{(j)}u_2^{(j)}\ldots u_{k(j)-1}^{(j)}u_{k(j)}^{(j)},
\end{equation*}
where $|u_i^{(j)}|=m$ for $1\leq i< k(j)$ and $m\leq |u_{k(j)}^{(j)}|<2m$. Using \eqref{upperboundoncounting} we see that for $1\leq i \leq k(j)-1$, the number of $u_i^{(j)}$'s which are not in $\lang_m(X^+)$ is bounded from above by $\dfrac{\eps|w^{(j)}|}{3m}$.

For each $j\geq1$, we create $\Bar{w}^{(j)}$ by replacing each $u_i^{(j)}\notin \lang(X^+)$ by some word $\bar{v}\in\lang_m(X^+)$ for $1 \leq i < k(j)$ and replacing $u_{k(j)}^{(j)}$ by some word $\bar{v}^{(j)}\in \lang(X^+)$ with $|u_{k(j)}|=|\bar{v}^{(j)}|$ if $u_{k(j)}^{(j)}\notin \lang(X^+)$. Therefore, we have
\begin{equation}\label{distanceof-barw-and-w}
    \dbar\left(\Bar{w}^{(1)}\Bar{w}^{(2)}\ldots,{w}^{(1)}{w}^{(2)}\ldots\right)<\dfrac{m\eps}{3m}+\dfrac{2\eps}{6}=\dfrac{2\eps}{3}.
\end{equation}
Notice that for each $j\geq 1$ the word $\Bar{w}^{(j)}$ is a concatenation of words from $\lang(X^+)$ whose lengths are greater or equal to $m$, so the same applies to $\bar{w}^{(1)}\bar{w}^{(2)}\dots $. 
Now, we use the $\dbar$-shadowing property of $X^+$ and we find $x\in X^+ \subseteq X$ such that
\begin{equation}\label{distanceof-x-and-wbar}
    \dbar\left(x,\Bar{w}^{(1)}\Bar{w}^{(2)}\ldots\right)<\eps/3.
\end{equation} It follows from \eqref{distanceof-barw-and-w} and \eqref{distanceof-x-and-wbar} that
$\dbar\left(x,w^{(1)}w^{(2)}\ldots\right)<\eps$, which concludes the proof.
\end{proof}
%



\section{$\dbar$-approachable examples of proximal and minimal shift spaces}
\label{sec:examples}

Before presenting the details of our constructions, we first recall the necessary background.


An (oriented) $\alf$-labeled (multi)graph is a triple $G = (V,E,\tau)$, where $V$ is the (finite) set of vertices, $E\subset V\times V$ is the edge set, and $\tau \colon E \to \alf$ is the label map. For each $e \in E$ we write $i(e), t(e) \in V$, to denote, respectively, the initial vertex and the terminal vertex of $e$. We say that a sequence (finite or infinite) consisting of $\ell\in\N_0\cup\{\infty\}$ edges $e_1, e_2, \dots$ in $E$ is a \emph{path} of length $\ell$ in $G$ if for every $i < \ell$ we have that $t(e_i)=i(e_{i+1})$. A path $e_1, e_2, \dots, e_\ell$ is \emph{closed} if $t(e_\ell)=i(e_1)$.
	
Given an oriented $\alf$-labeled graph $G = (V,E,\tau)$, we define the shift $X_G \subseteq \FS$ by reading off labels of all
infinite paths in $G$. In other words, $X_G$ is the set of all $x \in \FS$ such that $x_i = \tau(e_{i+1})$ for each $i \ge 0$ for some path $e_1,e_2, \dots$ in $G$.  We say that $X$ is a \emph{sofic shift} if there exists a labeled graph $G= (V,E,\tau)$
such that $X$ is presented by $G$, meaning that $X=X_G$.  Every shift of finite type is sofic. A sofic shift is transitive if and only if it can be presented by a (strongly) connected graph
(each pair of vertices can be connected by a path), see \cite[Prop. 3.3.11]{LM}. A sofic shift is topologically mixing if and only if it can be presented by a (strongly) connected aperiodic graph, i.e. the graph with two closed paths of coprime lengths.

To prove the properies of shift spaces resulting from our constructions we will use the following result, which is a direct corollary of a combination of Theorem 6 and Corollary 17 in \cite{KKK2}.

\begin{thm}\label{gen-scheme-summ}
Let $(X_n)_{n=1}^\infty$ be a decreasing sequence of mixing sofic shift spaces over $\alf$ such that
\[
\sum_{n=1}^\infty\Hdbar(X_n,X_{n+1})<\infty,
\]
Then $X=\bigcap_{n=1}^\infty X_n$ is a $\dbar$-approachable and chain mixing shift space such that $\Hdbarm(\Mse(X_n),\Mse(X)) \to 0$ as $n \to \infty$. In particular, $X$ satisfies $\sigma(X)=X$ and has the $\dbar$-shadowing property.
\end{thm}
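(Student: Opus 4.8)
The plan is to deduce everything from results already available in \cite{KKK2}, treating this statement purely as a packaging of Theorem 6 and Corollary 17 from that paper. First I would record the two facts we need from \cite{KKK2}: (i) each mixing sofic shift has the $\dbar$-shadowing property, hence is $\dbar$-approachable; and (ii) the $\dbar$-shadowing property is inherited by $\Hdbar$-limits of shift spaces with $\dbar$-shadowing. The role of the summability hypothesis $\sum_n \Hdbar(X_n,X_{n+1})<\infty$ is to guarantee that $(X_n)$ is $\Hdbar$-Cauchy and therefore $\Hdbar$-converges; since $(X_n)$ is decreasing with respect to inclusion, its usual (Vietoris) limit is $X=\bigcap_n X_n$, and the summability forces $\Hdbar(X_n,X)\to 0$ as well (the tail estimate $\Hdbar(X_n,X)\le\sum_{k\ge n}\Hdbar(X_k,X_{k+1})$ following from the triangle inequality for the Hausdorff pseudometric together with completeness of $\dbar$). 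So $X$ is a $\Hdbar$-limit of shift spaces with the $\dbar$-shadowing property, and by fact (ii) it has the $\dbar$-shadowing property itself.

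Next I would argue that $X$ is chain mixing. Here the point is that each $X_n$ is a mixing sofic shift, hence chain mixing, and $\dbar$-approachability plus chain mixing is exactly the hypothesis of Theorem 6 in \cite{KKK2}. Concretely, one invokes the equivalence from Theorem 6: a shift space is chain mixing and $\dbar$-approachable if and only if it is surjective ($\sigma(X)=X$) and has the $\dbar$-shadowing property. Since we have already established $\dbar$-shadowing for $X$, that same theorem gives $\sigma(X)=X$; and to get chain mixing one uses the third equivalent condition in Theorem 6 (the one the excerpt postpones), which in this setting follows from the mixing-sofic approximation together with Corollary 17 of \cite{KKK2}. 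In other words $\dbar$-shadowing plus the fact that $X$ is $\Hdbar$-approximated by mixing sofic shifts yields that the Markov approximations $X^M_n$ are eventually topologically mixing, which is precisely chain mixing.

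Finally, the convergence $\Hdbarm(\Mse(X_n),\Mse(X))\to 0$ is obtained by combining \eqref{hdbar-ineq} with the $\Hdbar$-convergence $X_n\to X$: indeed \eqref{hdbar-ineq} gives
\[
\Hdbarm(\Mse(X_n),\Mse(X))=\Hdbarm(\Ms(X_n),\Ms(X))\le\Hdbar(X_n,X)\longrightarrow 0,
\]
and the identity between the simplex version and the ergodic version of $\Hdbarm$ is again \eqref{hdbar-ineq}. The last sentence of the statement (``In particular, $\sigma(X)=X$ and $X$ has the $\dbar$-shadowing property'') is then just a restatement of what was proved in the second paragraph. The only genuine obstacle is bookkeeping: one must check carefully that the summability hypothesis really does upgrade $\Hdbar$-Cauchyness to $\Hdbar$-convergence \emph{to the inclusion-intersection} $\bigcap_n X_n$ rather than merely to \emph{some} $\dbar$-closed set, and that the hypotheses of Theorem 6 and Corollary 17 of \cite{KKK2} are met verbatim; modulo that, the proof is a short assembly of cited results.
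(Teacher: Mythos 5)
Your proposal is correct in substance and follows essentially the same route as the paper, which itself gives no argument beyond declaring the theorem ``a direct corollary of a combination of Theorem 6 and Corollary 17 in \cite{KKK2}'': you assemble it from the shadowing of mixing sofic shifts, the inheritance of $\dbar$-shadowing under $\Hdbar$-limits, the identification of the $\Hdbar$-limit of a summably-Cauchy decreasing sequence with $\bigcap_n X_n$ (which does require the $\dbar$-closedness of shift spaces, as you note), Theorem 6, and the inequality \eqref{hdbar-ineq}. The one step to repair is your claim that Theorem 6 ``gives'' $\sigma(X)=X$ from $\dbar$-shadowing alone --- the equivalence there is between the two conjunctions, so surjectivity must be supplied separately (e.g.\ by the standard compactness argument that a decreasing intersection of surjective subshifts is surjective, or by verifying the third condition of Theorem 6 outright), after which Theorem 6 does yield chain mixing and $\dbar$-approachability.
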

Let us recall that \cite[Corollary 17]{KKK2} is stated using the lower density. We do not need such flexibility here, so we stated it in terms of the $\dbar$-pseudodistance. 

In fact, we would like to apply Theorem \ref{gen-scheme-summ} to a sequence of mixing sofic shifts that is not decreasing. 
A natural way to apply Theorem \ref{gen-scheme-summ}  is to replace the sequence of sofic shifts $(X_n)_{n=1}^\infty$  with the decreasing sequence of shift spaces $(Y_n)_{n=1}^\infty$, where $Y_n:=X_1\cap\ldots\cap X_n$ for $n\in\N$. It is easy to see that thus defined shift $Y_n$ is also sofic for $n \in \N$. Indeed, if  for $m=1,2,\ldots,n$ a labeled graph $G_m = (V_m,E_m,\tau_m)$ presents the sofic shift space $X_m$, then $Y_n$ is a sofic shift presented by the graph $G=(V,E,\tau)$, where $V=\prod_{1\leq k\leq n}V_k$ and there is an edge from $(v_1,\ldots,v_n)\in V$ to $(v'_1,\ldots,v'_n)\in V$ with label $\ell\in\alf$ if and only if for every $1\leq k\leq n$ in the graph $G_k$ there is an edge from $v_k$ to $v'_k$ labeled with $\ell$. We say that the graph $G=(V,E,\tau)$  is the \emph{coupling} of graphs $G_k$, $1\leq k\leq n$. Unfortunately, $Y_n$ need not be mixing even if the shift spaces $X_1,\ldots, X_n$ are.
The problem is that strong connectedness of the graphs $G_k$ for $1\leq k\leq n$ need not ensure strong connectedness of their coupling $G$. Hence, the induced sofic shift $Y_n$ need not be transitive and its ergodic measures need not be dense. Indeed, Figure \ref{fig:no-safe-symbol} shows two graphs whose coupling is a sofic shift which is not transitive and whose ergodic measures are not dense in the set of all invariant measures. The same situation takes place for the sofic shifts represented in Figure \ref{fig:period}.

\begin{figure}[htbp]
  \centering
  \includegraphics[scale=.75]{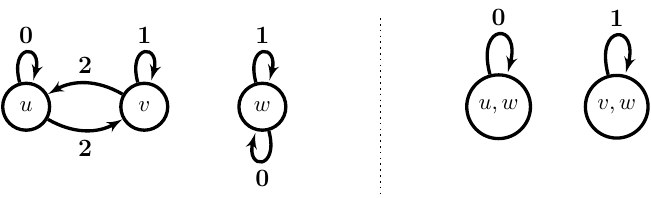}
\caption{Two graphs (left) whose coupling (right) is disconnected. The first graph has no safe symbol.}
\label{fig:no-safe-symbol}
\end{figure}

However, under mild additional assumptions we can ensure that the sofic shifts $Y_n$  are transitive. Let $X$ be a sofic shift over the alphabet $\alf$ and pick a labeled graph $G = (V,E,\tau)$ presenting $X$. We call a symbol $b \in \alf$ a \emph{safe symbol} for $X$ if for every edge $e\in E$  that goes from a vertex $v\in V$ to a vertex $v'\in V$, there is an edge $e' \in E$ from $v$ to $v'$ with label $\tau(e')=b$. The \emph{period} of a graph $G = (V,E)$ is the greatest common divisor of the lengths of all cycles. Every sofic shift also has a period, which is the greatest common divisor of periods of its presentations through labeled graphs. A graph is \emph{aperiodic} if it has period $1$. Every sofic shift with period $1$ has an aperiodic presentation. We will use a standard fact from graph theory, stating that if $G = (V,E)$ is a strongly connected graph with period $m$ then the set of lengths of paths between any pair of vertices $u,v \in V$ is the set-theoretic difference of an infinite arithmetic progression with step $m$ and a finite set.

\begin{figure}[hb]
  \centering
  \includegraphics[scale=.75]{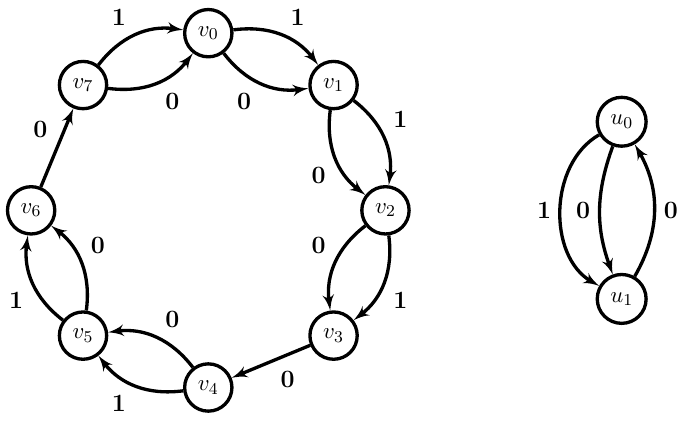}
  \caption{Two graphs whose periods are not coprime (above) and their coupling (below).}
  \includegraphics[scale=.75]{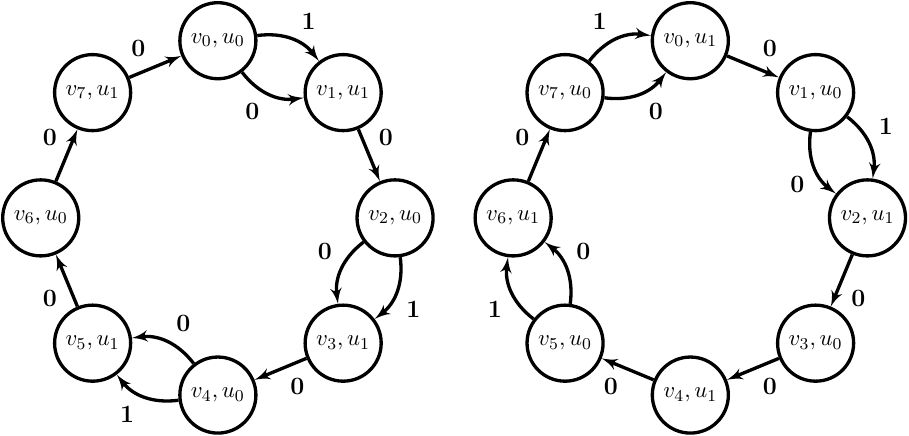}
  \label{fig:period}
\end{figure}

Note that the two shifts in Figure \ref{fig:no-safe-symbol} are aperiodic but lack a common safe symbol, while the two shifts in Figure \ref{fig:period} share a safe symbol $0$ but have positive periods $8$ and $2$. Hence, neither assumption in the following proposition can be removed.

\begin{prop}\label{prop-connected-coupling}
    If $G_k=(V_k,E_k,\tau_k)$ for $1\leq k\leq n$ are strongly connected labeled graphs with a common safe symbol and pairwise coprime periods, then their coupling $G$ is strongly connected.
\end{prop}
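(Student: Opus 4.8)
The plan is to reduce the problem to a statement about reachability in the coupling graph $G$ and then combine two classical facts: that in a strongly connected graph of period $m$ the set of path lengths between any fixed pair of vertices is a cofinite subset of $m\N$, and that pairwise coprime arithmetic progressions eventually cover all sufficiently large integers (a Chinese-Remainder-Theorem type argument). The safe symbol is what lets us pass from ``paths of the right lengths exist in each factor'' to ``a single labelled path exists in the coupling that simultaneously projects correctly onto every factor''.

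First I would fix two vertices $(u_1,\dots,u_n)$ and $(u'_1,\dots,u'_n)$ of $G$ and explain what a path between them in $G$ is: a word $\ell_1\ell_2\ldots\ell_L$ over $\alf$ together with, for each $k$, a path in $G_k$ from $u_k$ to $u'_k$ whose label sequence is exactly $\ell_1\ldots\ell_L$. So the task is to find one common length $L$ and one common label word realising a $u_k\to u'_k$ path in every $G_k$ at once. Let $m_k$ be the period of $G_k$. By the quoted graph-theory fact, for each $k$ there is a finite set $F_k$ such that the set of lengths of $u_k\to u'_k$ paths in $G_k$ equals $(r_k+m_k\N)\setminus F_k$ for some residue $r_k$ (more precisely, it is cofinite in the coset $r_k+m_k\Z$ intersected with $\N$). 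Since the $m_k$ are pairwise coprime, by CRT the system of congruences $L\equiv r_k \pmod{m_k}$ has a solution, and the solution set is a single residue class mod $M:=m_1\cdots m_n$; hence for all sufficiently large $L$ in that class, $L$ is simultaneously an admissible $u_k\to u'_k$ path length in $G_k$ for every $k$. Fix such an $L$.

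Now the key step: produce a common label word of length $L$. For each $k$ pick a path $p_k$ in $G_k$ from $u_k$ to $u'_k$ of length $L$; a priori these paths carry different labels. Here the common safe symbol $b$ enters. Along the path $p_k=e_1^{(k)}\ldots e_L^{(k)}$, for each step $j$ the edge $e_j^{(k)}$ goes from some vertex $v$ to some vertex $v'$ in $G_k$; by the safe-symbol property there is an edge from $v$ to $v'$ in $G_k$ labelled $b$. Replacing every edge of $p_k$ by the corresponding $b$-labelled edge yields a path $\tilde p_k$ in $G_k$ from $u_k$ to $u'_k$, still of length $L$, whose label word is $b^L$. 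This works for every $k$ simultaneously with the \emph{same} word $b^L$, so $(b,\dots,b)^L$ reads off a closed-up path in $G$ from $(u_1,\dots,u_n)$ to $(u'_1,\dots,u'_n)$; in other words $\ell_j=b$ for all $j$ and the $\tilde p_k$ are the required factor paths. Since the two vertices of $G$ were arbitrary, $G$ is strongly connected.

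The main obstacle is the bookkeeping around the ``cofinite coset'' statement: one must be careful that $(r_k+m_k\N)\setminus F_k$ is genuinely cofinite in the coset, that CRT applied to the $r_k$ really does produce a single residue mod $M$ once the $m_k$ are pairwise coprime, and that for all large enough $L$ in that residue class one avoids all of the finitely many exceptional sets $F_k$ at once — this is routine but is the place where pairwise coprimality is essential (and, as the paper notes via Figure \ref{fig:period}, genuinely necessary). The safe-symbol reduction, by contrast, is clean: it collapses the label-matching problem entirely, since $b^L$ is a legal label word for a $u_k\to u'_k$ path in every factor at once. One small point to state explicitly: the safe-symbol property is about a \emph{fixed} presentation $G_k$, so strong connectedness is proved for the specific coupling $G$ built from those presentations, which is exactly what Theorem \ref{gen-scheme-summ}'s intended application needs.
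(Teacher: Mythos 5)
Your proposal is correct and follows essentially the same route as the paper's proof: both use the fact that path lengths between a fixed pair of vertices in a strongly connected graph of period $m_k$ form a cofinite subset of a residue class modulo $m_k$, combine the pairwise coprimality via a Chinese-Remainder-type argument to get a common admissible length, and invoke the common safe symbol to replace each factor path by one labelled entirely with $b$, so that the paths lift to a single path in the coupling. Your write-up is somewhat more explicit about the bookkeeping (the exceptional sets $F_k$ and the order of quantifiers), but the argument is the same.
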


\begin{proof} Let $(v_1,v_2,\ldots,v_k)$ and $(v'_1,v'_2,\ldots,v'_n)$ be any two vertices in $G$. For any $1\leq k\leq n$, there exists a walk from $v_k$  to $v'_k$ in the graph $G_k$. Let $\ell_k$ denote the length of one such walk and let $m_k$ denote the period of $G$. For all sufficiently large $j \in \N_0$ there exists a walk of length $jm_k+\ell_k$ from $v_k$  to $v'_k$. Since the graphs under consideration have a common safe symbol $b$, we may additionally assume that the edges in aforementioned paths are all labelled with $b$. Since the integers $m_k$ are coprime, there exists infinitely many integers $\ell$ such that $\ell \equiv \ell_k \bmod{m_k}$ for each $1\leq k \leq n$. Consequently, we can find $\ell_0$ such that for each $1\leq k \leq n$ there exists in $G_k$ a walk from $v_k$ to $v_k'$ of length $\ell_0$, consisting only of edges labeled with $b$. This walk induces a walk length $\ell_0$ from $(v_1,v_2,\ldots,v_k)$ to $(v'_1,v'_2,\ldots,v'_n)$ in $G$.
\end{proof}

\begin{cor}\label{cor-finite-intersection-of-sofic}
Let $n\in \N$. If $X_{k}$ for $1\leq k\leq n$ are transitive sofic shifts with a common safe symbol and pairwise coprime periods, then $Y=X_1\cap\ldots \cap X_n$ is a non-empty transitive sofic shift with a safe symbol. Furthermore, if for each $1\leq k\leq n$ the shift space $X_k$ is topologically mixing, then $Y$ is also topologically mixing.
\end{cor}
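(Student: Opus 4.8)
The plan is to realise $Y$ as the sofic shift presented by the coupling of carefully chosen presentations of the $X_k$ and then to invoke Proposition~\ref{prop-connected-coupling}; after that, transitivity, the safe symbol and non-emptiness are immediate checks, and the topologically mixing case follows from a small refinement of the proof of Proposition~\ref{prop-connected-coupling}.

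First I would fix, for each $1\le k\le n$, a strongly connected presentation $G_k=(V_k,E_k,\tau_k)$ of $X_k$ whose period equals the period of $X_k$ and for which $b$ is a safe symbol; when $X_k$ is topologically mixing I would additionally require $G_k$ to be aperiodic. To build such a $G_k$, I would start from any strongly connected presentation realising the period of $X_k$ (for instance a Fischer cover, which is strongly connected and has period equal to the period of the shift) and then adjoin, for every edge $v\to v'$ of that presentation, a new parallel edge $v\to v'$ labelled $b$. Adjoining parallel edges changes neither strong connectedness nor the set of lengths of cycles, so the period is unchanged; the enlarged graph visibly has $b$ as a safe symbol; and it still presents $X_k$, because the hypothesis that $b$ is safe for $X_k$ says precisely that changing any collection of coordinates of a point of $X_k$ to $b$ leaves it in $X_k$, and that is exactly what reading a path that uses the new $b$-edges amounts to. Since the periods of the $X_k$ are pairwise coprime, so are the periods of the $G_k$.

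Now let $G=(V,E,\tau)$ be the coupling of $G_1,\dots,G_n$. By the discussion preceding Proposition~\ref{prop-connected-coupling}, $G$ presents $Y=X_1\cap\dots\cap X_n$, so $Y$ is sofic; by that proposition $G$ is strongly connected, hence $Y$ is transitive. The symbol $b$ is safe for $Y$ via $G$: if there is an edge of $G$ from $(v_1,\dots,v_n)$ to $(v'_1,\dots,v'_n)$, then each $G_k$ has an edge $v_k\to v'_k$, hence also a $b$-labelled edge $v_k\to v'_k$, and these assemble into a $b$-labelled edge of $G$ between the two vertices. Non-emptiness is clear: each $G_k$ is strongly connected and has at least one edge, hence contains a cycle, and replacing its edges by parallel $b$-edges gives a $b$-labelled cycle, so $b^\infty\in X_k$ for every $k$, whence $b^\infty\in Y$.

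Finally, suppose every $X_k$ is topologically mixing, so every $G_k$ was chosen aperiodic. Re-running the proof of Proposition~\ref{prop-connected-coupling} with all the $m_k$ equal to $1$, the congruence conditions become vacuous and the argument produces, between any two vertices of $G$, a $b$-labelled walk of \emph{every} sufficiently large length; a strongly connected graph with this property is aperiodic (its adjacency matrix is primitive), and a sofic shift with a strongly connected aperiodic presentation is topologically mixing, so $Y=X_G$ is topologically mixing. The one step I expect to require genuine care is the choice of the presentations $G_k$ in the second paragraph, since both ``safe symbol'' and ``period'' depend a priori on the chosen presentation; once those $G_k$ are in hand, Proposition~\ref{prop-connected-coupling} together with the coupling construction reduces the corollary to the routine verifications above.
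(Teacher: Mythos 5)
Your proof is correct and follows essentially the route the paper intends: realize $Y$ as the sofic shift presented by the coupling of suitable presentations and apply Proposition~\ref{prop-connected-coupling}, with transitivity, the safe symbol, non-emptiness and (via aperiodicity of the coupling) mixing then read off the coupled graph. The one point the paper leaves implicit --- choosing presentations that simultaneously realize the period and have $b$ as a safe symbol, which you handle by adjoining parallel $b$-labelled edges and noting this changes neither the presented shift (by the intrinsic closure of $X_k$ under replacing coordinates with $b$) nor the cycle lengths --- is exactly the right care to take, and your treatment of it is sound.
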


\subsection{A $\dbar$-approachable proximal shift space}
\label{proximal}
We are going to construct a $\dbar$-ap\-proachable and topologically mixing proximal shift. Furthermore, our example is hereditary and has positive topological entropy, hence its ergodic measures are ent\-ro\-py-dense and its simplex of invariant measures is the Poulsen simplex. Assume that $\alf\subset\N_0$. A shift space $X\subseteq\FS$  is \emph{hereditary} if for every $x\in X$ and $y\in\FS$ with $y_i \leq x_i$ for all $i \ge 0$ we have $y \in X$. The \emph{hereditary closure} $\tilde X$ of a shift space $X$ is the smallest hereditary shift containing $X$. That is, $\tilde{X}$  consists of all $y \in \FS$ such that there exists $x=(x_i)_{i\ge 0} \in X$ with $y_i \leq x_i$ for all $i \ge 0$. For more on hereditary shifts, see \cite{K}. For some special properties of the simplex of invariant measures of hereditary shifts see Remark \ref{rem:her} below. Recall that a hereditary shift space $X$ is proximal if for every $N>0$ and $x\in X$ the word $0^N$ appears with bounded gaps in $x$ (see the discussion of the Theorem B and the proof of Proposition 3.3 in \cite{DKKPL}).

\begin{exmp}\label{ex:proximal}
For $n\in\N$ we consider a sofic shifts $Z_n$ presented by a labeled graph $G_n = (V_n,E_n,\tau_n)$ where $V_n=\{v_0, v_1, \ldots,v_{10^n-1}\}$ and  edges and their labels given by:
\begin{itemize}
\item for every $0 \leq k < 10^n$, there is an edge from $v_k$ to $v_{k+1}$ with label $0$, where $v_{10^n} = v_{0}$;
\item for every $1\leq k\leq 10^n-2^n$, there is an edge from $v_k$ to $v_{k+1}$ with label $1$,
\item there is an edge from $10^n-2^n$ to $10^n-2^n+2$ with label $0$.
\end{itemize}
Let $Z = \bigcap_{n=1}^\infty Z_n$. Then for each $n \geq 1$ we have $Z_{n+1} \not \subset Z_{n}$ and $Z_n \not \subset Z_{n+1}$.
\end{exmp}

\begin{figure}[htbp]
  \centering
  \includegraphics[scale=.75]{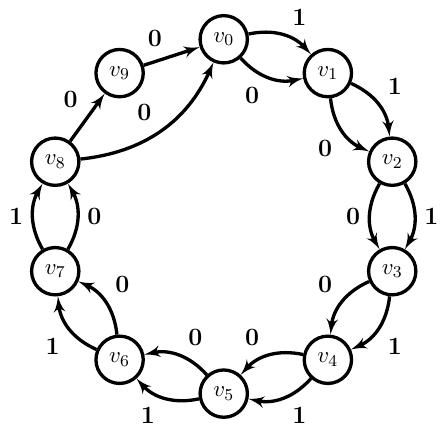}
  \caption{The graph from Example \ref{ex:proximal} with $n=1$.}
  \label{fig:proximal}
\end{figure}

\begin{prop}\label{prop:proximal}
  The shift space $Z$ defined in Example \ref{ex:proximal} is hereditary, topologically mixing, proximal, has positive topological entropy, and the ergodic measures $\Mse(Z)$ are entropy-dense in $\Ms(Z)$.
\end{prop}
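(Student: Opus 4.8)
The plan is to realise $Z$ as the intersection of a decreasing sequence of topologically mixing sofic shifts, apply Theorem~\ref{gen-scheme-summ}, and then read the remaining properties off the graphs $G_n$.

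We first record three facts about each $Z_n$: the symbol $0$ is a safe symbol for $G_n$ (every edge is parallel to a $0$-labelled edge); $G_n$ is strongly connected (follow $0$-edges) and aperiodic, since it carries an all-$0$ cycle of length $10^n$ and, via the shortcut edge, an all-$0$ cycle of length $10^n-1$, so $Z_n$ is a topologically mixing sofic shift; and, $0$ being a safe symbol, $\lang(Z_n)$ is closed under changing $1$'s to $0$'s, so $Z_n$ is hereditary. Since an intersection of hereditary shifts is hereditary, $Z=\bigcap_n Z_n$ is hereditary. Now set $Y_n:=Z_1\cap\dots\cap Z_n$; the shifts $Z_1,\dots,Z_n$ share the safe symbol $0$ and, being aperiodic, trivially have pairwise coprime periods, so Corollary~\ref{cor-finite-intersection-of-sofic} shows each $Y_n$ is a topologically mixing sofic shift, and $(Y_n)_{n\ge1}$ is decreasing with $\bigcap_n Y_n=Z$.

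The key point is $\sum_n\Hdbar(Y_n,Y_{n+1})<\infty$, and here heredity does the work. Let $F_k\subseteq\mathbb{Z}/10^k\mathbb{Z}$ be the set of $2^k$ residues of vertices of $G_k$ with no outgoing $1$-edge, so the admissible vertices for a $1$ form the ``free arc'' $\{1,\dots,10^k-2^k\}$. Given $x\in Y_n$, define $y$ by $y_j=0$ whenever $(j\bmod 10^{n+1})\in F_{n+1}$ and $y_j=x_j$ otherwise. Then $y\le x$, so $y\in Y_n$ by heredity; and reading $y$ along the path $v_0\to v_1\to v_2\to\cdots$ in $G_{n+1}$ (indices modulo $10^{n+1}$) exhibits $y\in Z_{n+1}$, since a $1$ of $y$ only occurs at a vertex carrying a $1$-edge. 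Hence $y\in Y_{n+1}$, and as $x$ and $y$ differ on a set of density at most $2^{n+1}/10^{n+1}=5^{-(n+1)}$ we get $\Hdbar(Y_n,Y_{n+1})=\sup_{x\in Y_n}\dbar(x,Y_{n+1})\le 5^{-(n+1)}$, which is summable. By Theorem~\ref{gen-scheme-summ}, $Z$ is chain mixing and $\dbar$-approachable, $\sigma(Z)=Z$, $Z$ has the $\dbar$-shadowing property, and $\Hdbarm(\Mse(Y_n),\Mse(Z))\to0$; in particular $Z$ is surjective with $\dbar$-shadowing, so its ergodic measures are entropy-dense in $\Ms(Z)$ by \cite{KKK2}.

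It remains to check proximality, positive entropy, and genuine topological mixing. For proximality we use the quoted criterion: given $x\in Z$ and $\ell\ge1$, pick $n$ with $2^n-1\ge\ell$; the path in $G_n$ defining $x$ returns to $v_0$ with gaps at most $10^n$, and the $2^n-1$ labels immediately preceding each return are all $0$, so $0^\ell$ occurs in $x$ with gaps at most $2\cdot10^n$. For positive entropy, put $G=\{j\in\N_0:\ (j\bmod 10^k)\notin F_k\ \text{for every }k\ge1\}$; reading along the advance-by-one path as above, every $x\in\{0,1\}^\infty$ with $\supp(x)\subseteq G$ lies in every $Z_k$, hence in $Z$, so $w\,0^\infty\in Z$ for each word $w$ with $\supp(w)\subseteq G\cap[0,|w|)$, giving $|\lang_m(Z)|\ge 2^{|G\cap[0,m)|}$; a direct count using $\sum_{k\ge1}2^k/10^k=\tfrac14$ gives $|G\cap[0,m)|\ge\tfrac34 m-o(m)$, whence $\htop(Z)\ge\tfrac34\log2>0$. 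Finally, since $Z$ is hereditary, topological mixing reduces to showing $u\,0^n\,w\in\lang(Z)=\bigcap_k\lang(Z_k)$ (the latter identity by compactness) for all $u,w\in\lang(Z)$ and all large $n$: when $10^k$ is large relative to $|u|+|w|$ one places $u$ and $w$ inside the free arc of $G_k$ joined by a block of $0$'s — the arcs of admissible offsets for $u$ and for $w$ have lengths summing to more than $10^k$, hence overlap, and the construction then works for every $n$; for the finitely many remaining $k$ the $0$-edge subgraph of $G_k$ is strongly connected and aperiodic, so $u\,0^n\,w$ is realised in $Z_k$ once $n$ exceeds a threshold depending only on $k$. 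We expect this last step — matching the scales $10^k$ with the length of the connecting block $0^n$ — to be the only genuinely fiddly part; the summability estimate, by contrast, is immediate once heredity is available.
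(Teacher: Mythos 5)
Your proof is correct and follows essentially the same route as the paper: the same decreasing intersections $Y_n=Z_1\cap\dots\cap Z_n$ via Corollary \ref{cor-finite-intersection-of-sofic}, the same zeroing-out estimate $\Hdbar(Y_n,Y_{n+1})\le 5^{-(n+1)}$ feeding Theorem \ref{gen-scheme-summ}, and the same arc-placement/pigeonhole argument for mixing at large scales plus mixing of the individual $Z_k$ at small scales, with proximality and positive entropy read off the graphs directly (your entropy count is more explicit than the paper's one-line appeal to a point with positive density of $1$'s in a hereditary shift). The only imprecision is attributing $\lang(Z)=\bigcap_k\lang(Z_k)$ to compactness: compactness over the nested $Y_k$ gives only one inclusion, and the reverse uses heredity (if $a\in\lang(Z_k)$ then $a0^\infty\in Z_k$ for every $k$, hence $a0^\infty\in Z$), a one-line fix with tools you already have in hand.
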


\begin{proof}
  Since  all $Z_n$'s are hereditary, $0$ is their common safe symbol and the shift $Z$ is hereditary as well. Furthermore, it contains a sequence where $1$'s appear with positive density, hence $Z$ has positive entropy. For every $k\in\N$, in the graph $G_k$ presenting $Z_k$ in Example \ref{ex:proximal} there are two closed walks of coprime lengths $10^k$ and $10^{k}-1$, whence $G_k$ is aperiodic. By Corollary \ref{cor-finite-intersection-of-sofic}, for each $n \in \N$ the intersection $Y_n:=Z_1\cap\ldots\cap  Z_{n}$ is a topologically mixing sofic shift. In particular, the ergodic measures of $Y_n$ are entropy dense (\cite{EKW}). By Corollary \ref{gen-scheme-summ}, in order to conclude that the ergodic measures on $Z$ are entropy-dense in $\Ms(Z)$, it suffices to check that
\[\sum^\infty_{n=1}\Hdbar(Y_n,Y_{n+1})< \infty.\]
Fix $n\in\N$. To bound $\Hdbar(Y_n,Y_{n+1})$, consider $x\in Y_n$. Define $y\in \{0,1\}^\infty$ by
\[
y_j=
\begin{cases}
  x_j& \text{if } j \bmod{10^{n+1}} \in [0,10^{n+1}-2^{n+1}),\\
  0& \text{if } j \bmod{10^{n+1}} \in [10^{n+1}-2^{n+1},10^{n+1}).
\end{cases}
\]
It is clear that $y\in Z_{n+1}$. Since $Y_n$ is hereditary and $x\in Y_n$, we see that $y$ belongs to $Y_n$ as well. Thus, $y\in Y_n\cap Z_{n+1}=Y_{n+1}$. Furthermore, 
$\dbar(x,y)\leq \left({1}/{5}\right)^{n+1}$, and hence
$\Hdbar(Y_n,Y_{n+1})\leq \left({1}/{5}\right)^{n+1}$.
This completes the proof of entropy density of ergodic measure.

Finally, we prove topological mixing for $Z$. Bearing in mind that $Z$ and $Z_n$ ($n \in \N$) are hereditary, in order to show that $Z$ is topologically mixing it is enough to show that for each $u,v \in \Bl(Z_n)$ there exists $M \in \N$ such that for all $m \geq M$ and all $n \in \N$ we have $u 0^m v \in \Bl(Z_n)$ (hence $u 0^m v \in \Bl(Z)$).

Fix $u,v$, denote $i=|u|$ and $j=|v|$. Let $N \in \N$ be such that $10^n > i+j+2(2^n-2)$, for all $n\ge N$. Fix $n\ge N$. By the pigeonhole principle, for each $m \in \N$ there exists $t \in \N$ such that $[t,t+i) \bmod 10^n \subset [0,10^n-2^n)$ and $[t+i+m,t+i+m+j)\bmod 10^n \subset [0,10^n-2^n)$. By the definition of $Z_n$, starting a path in the corresponding graph from $v_t$ and ending in $v_{t+i+m+j}$, we can read $u0^mv$ along the path, so the word belongs to $\Bl_{i+m+j}(Z_n)$. We have just proved that for all $n\ge N$ and all $m\in\N$, $u0^mv\in\Bl(Z_n)$. It remains to discuss the case when $n\le N$.

Since for each $n \in \N$ the system $Z_n$ is mixing, there exists $M_n \in \N$ such that $u 0^m v \in \Bl(Z_n)$ for all $m \geq M_n$. Put $M$ to be maximum of $M_n$, $n\le N$. Then for $m\ge M$, $u0^mv\in\Bl(Z_n)$ for all $n\le N$. But we have already proved the same conclusion for $n\ge N$ too. This concludes the proof of topological mixing of $Z$.

Since $Z$ is hereditary to prove that it is proximal it is enough to show that for every $N>0$ and $x\in Z$ the word $0^N$ appears with bounded gaps in $z$ (see \cite{DKKPL}). But every $z\in Z$ must belong to $Y_n$ for every $n\ge 1$, so arbitrarily long blocks of $0$'s appear syndetically (ie. with bounded gaps).
\end{proof}

\begin{rem}\label{rem:her}
Since $Z$ is hereditary and proximal, some of the results from \cite{KKK2,K} apply: $Z$ is
distributionally chaotic of type 2 \cite[Thm. 23]{K}, but not of type 1 \cite[Thm. 23]{K} (cf. also \cite{Oprocha}. Moreover, for each
$t > 0$, the set of all ergodic invariant measures on $Z$ with entropy not exceeding $t$ is arcwise connected
with respect to the $\dbar$-metric on the set of all invariant measures \cite[Thm. 6]{KKK2}.
\end{rem}

\subsection{A $\dbar$-approachable minimal shift space}
\label{sec:minimal}


We are going to construct a minimal shift space which is $\dbar$-approximable by a descending sequence of mixing sofic shifts. The sofic shift $X_n$ in the sequence will be generated by a finite code $\cB_n\subset\{0,1\}^+$.  The parameters of the construction are an initial finite non-empty set of words  $\cB_1$ and a sequence of positive integers $(t(n))^\infty_{n=1}$. We assume that $t(n)\geq 2$ for every $n$. We will impose some more conditions on $t(n)$'s and $\cB_1$ later.

Assume we have defined the family of words $\cB_n$ for some $n\ge 1$. Write $k(n)$ for the cardinality of $\cB_n$. Enumerate the elements of $\cB_n$ as $\betn_1,\ldots,\betn_{k(n)}$, and let $\tau(n)=\betn_1\ldots\betn_{k(n)}$ denote their concatenation. Let
$s(n)$ (respectively, $\ell(n)$) be the length of the \emph{shortest} (respectively, the \emph{longest}) word in $\cB_n$.
Words belonging to $\cB_{n+1}$ are constructed as follows: first we concatenate $t(n)$ arbitrarily chosen words from $\cB_n$, then we add the suffix $\tau(n)$:
\[
\cB_{n+1}=\{b_1 b_2 \dots b_{t(n)} \tau(n) : b_i \in\cB_n \text{ for } 1\leq  i \leq t(n)\}.\]


By the construction, $\ell(n)<\tau(n)<s(n+1)$ for every $n \geq 1$ and so $s(n)\nearrow\infty$ as $n\to\infty$. Moreover, every word from $\cB_n$ is a subword of every word from $\cB_{n+1}$. Recursively,
\begin{equation}\label{cond:B_n_subwords}
\text{$u$ is a subword of $v$, for every $u\in\cB_n$, $v\in\cB_m$, $n\leq m$}.
\end{equation}


For $n\ge 1$ let $X_n$ be the coded shift generated by the code $\cB_n$. That is, $X_n$ consists of all concatenations of words from $\cB_n$ together with their shifts. Since $\cB_n$ is finite, the shift $X_n$ is transitive and sofic. It follows from \eqref{cond:B_n_subwords} that $X_{n+1}\subseteq X_n$. Hence $X=\bigcap_{n=1}^\infty X_n$ is a non-empty shift space.


\begin{prop}\label{prop:X-is-minimal}
The shift $X$ constructed above is minimal.
\end{prop}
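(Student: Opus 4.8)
The plan is to show that every word appearing in $X$ appears in every point of $X$ with bounded gaps; by a standard characterization this is equivalent to minimality of $X$. So fix $w\in\lang(X)$ and an arbitrary $x\in X$, and aim to produce a gap bound $R=R(w)$ such that $w$ occurs in $x$ within every window of length $R$.

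First I would use the layered structure of the construction. Since $\lang(X)\subseteq\lang(X_n)$ and, by \eqref{cond:B_n_subwords}, each word of $\cB_n$ is a subword of every word of $\cB_m$ for $m\ge n$, the key observation is that $w$ appears inside some word of $\cB_n$ once $n$ is large enough. Concretely: $w\in\lang(X)$ means $w$ is a subword of some point of $X$, which is an infinite concatenation of $\cB_n$-words for every $n$; choosing $n$ with $s(n)>|w|$, $w$ either sits inside a single block $\beta\in\cB_n$ or straddles a boundary between two consecutive $\cB_n$-blocks. In the straddling case I would pass one level up: $w$ then lies inside a concatenation of two consecutive $\cB_n$-words, which by the recursive definition $\cB_{n+1}=\{b_1\cdots b_{t(n)}\tau(n):b_i\in\cB_n\}$ is a subword of every element of $\cB_{n+1}$ (here one uses that within any $\cB_{n+1}$-word one finds every pair of consecutive $\cB_n$-words, either among $b_1\cdots b_{t(n)}$ using $t(n)\ge 2$, or within the fixed suffix $\tau(n)=\beta^{(n)}_1\cdots\beta^{(n)}_{k(n)}$ when $k(n)\ge 2$, together with the junctions between the $b_i$'s and $\tau(n)$ — in any case one obtains, after possibly going one more level up, that $w$ is a subword of every word in $\cB_{N}$ for some fixed $N$).

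Now fix such an $N$ with $w$ a subword of every $\beta\in\cB_N$, and set $R=2\ell(N)$ where $\ell(N)$ is the maximal length of a $\cB_N$-word. The point $x\in X$ lies in $X_N$, hence $x$ is a shift of an infinite concatenation of $\cB_N$-words: there is $r\ge 0$ and $b_1,b_2,\dots\in\cB_N$ with $x=\sigma^r(b_1b_2b_3\cdots)$. Any window of length $R=2\ell(N)$ in $x$ must entirely contain at least one full block $b_i$ (since each block has length at most $\ell(N)$, a window of length $2\ell(N)$ cannot be covered by the two partial blocks at its ends). That block $b_i$ contains an occurrence of $w$. Hence $w$ occurs in every window of length $2\ell(N)$ of $x$, i.e.\ with gaps bounded by $2\ell(N)$, uniformly in $x$. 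Since $x\in X$ was arbitrary, $X$ is minimal.

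The main obstacle I anticipate is the bookkeeping in the second paragraph: verifying cleanly that after at most one or two ``lift'' steps $w$ becomes a genuine subword of \emph{every} word of some $\cB_N$ — one must be careful about $w$ straddling the junction between the free part $b_1\cdots b_{t(n)}$ and the suffix $\tau(n)$, and about the case $k(n)=1$ (only one word in $\cB_n$), where the suffix contributes no new internal junction. A robust way around this is to note that $\tau(n)$ is a concatenation of all of $\cB_n$ and appears as a suffix of every $\cB_{n+1}$-word, so in a word of $\cB_{n+2}$ one sees $\cdots\tau(n)\,b'_1\cdots$, i.e.\ every block of $\cB_{n+1}$ followed by $\tau(n)$ and then by an arbitrary $\cB_n$-word; iterating the argument two levels up absorbs all boundary cases. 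Once this combinatorial claim is pinned down, the density/minimality conclusion is immediate from the length bound on $\cB_N$-words as above.
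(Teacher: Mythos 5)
Your overall architecture matches the paper's: locate $w$ inside a concatenation of two consecutive $\cB_n$-blocks (using $s(n)>|w|$), lift it until it is a subword of \emph{every} word of some $\cB_N$, and conclude from the block structure of points of $X_N\supseteq X$. The only real difference is that you derive syndetic occurrence with gap $2\ell(N)$, while the paper only needs that $w$ occurs somewhere in $x$; both yield minimality.

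However, the justification you offer for the lift step is incorrect. It is not true that ``within any $\cB_{n+1}$-word one finds every pair of consecutive $\cB_n$-words'': with $\cB_1=\{0,11\}$ the word $0^{t(1)}\tau(1)\in\cB_2$ contains at most two consecutive $1$'s, so it does not contain the pair $11\cdot 11$, which is a perfectly legitimate junction of consecutive $\cB_1$-blocks in $X_1$. The fixed suffix $\tau(n)$ realizes only the $k(n)-1$ junctions $\betn_i\betn_{i+1}$, and your ``robust way around'' has the same defect: at a junction of two consecutive $\cB_{n+1}$-blocks inside a $\cB_{n+2}$-word one sees $\tau(n)$ followed by whichever $\cB_n$-block happens to open the next $\cB_{n+1}$-block --- again not every pair. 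The correct repair is a two-step lift, which is exactly what the paper does: since $t(n)\ge 2$ and the first $t(n)$ blocks of a $\cB_{n+1}$-word are arbitrary, the specific pair $vw'$ containing $w$ occurs in \emph{some} word of $\cB_{n+1}$; then \eqref{cond:B_n_subwords} (every $\cB_{n+1}$-word sits inside $\tau(n+1)$, hence inside every $\cB_{n+2}$-word) shows that $w$ is a subword of every word of $\cB_{n+2}$. Taking $N=n+2$, the remainder of your argument goes through verbatim.
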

\begin{proof}
If $|\cB_1|=1$, then $X$ is an orbit of a periodic point. Let us assume that $|\cB_1|>1$, so $|\cB_n|>1$ for every $n$.
  We need to prove that if $u\in\lang(X)=\bigcap_{n=1}^\infty\lang(X_n)$ and $x\in X$, then $u$ is appears in $x$. Fix $x\in X$ and $u\in\lang(X)$. Take $n$ large enough to imply $|u|< s(n)$. Since $\lang(X)\subseteq\lang(X_n)$, we see that $u$ must appear in some $x'\in X_n$. As all words in $\cB_n$ are longer than $u$ and $x'$ is a shift (possibly trivial) of an infinite concatenation of words from $\cB_n$, we conclude that $u$ is a subword of some $\bar u\in\lang(X_n)$ which is the concatenation of two words $v,w$ in $\cB_{n}$ (one of them might be empty). By the definition of $\cB_n$'s, every concatenation $vw$ for words $v$ and $w$ from $\cB_n$ appears in some word from $\cB_{n+1}$, therefore $vw$ and in particular $u$ is a subword of a word $w'\in\cB_{n+1}$. Hence, condition (\ref{cond:B_n_subwords}) ensures that $u$ is a subword of all words from $\cB_{n+2}$. But $x\in X_{n+2}$, so it is a shifted infinite concatenation of words from  $\cB_{n+2}$. In particular, some word from $\cB_{n+2}$ appears in $x$, and so does $u$.
\end{proof}


From now on, we set $\cB_1=\{0,11\}$. For this choice of $\cB_1$ a simple inductive argument shows that for each $n \geq 1$, the set of lengths of all words in $\cB_n$ is an interval:
\begin{equation}\label{cond:diff-1-strong}
\text{for every $m$ with $s(n) \leq m \leq \ell(n)$, there exists $u \in \cB_n$ with $\abs{u} = m$.}
\end{equation}

\begin{prop}For every $n\ge 1$ the coded system $X_n$ is a mixing sofic shift. 
\end{prop}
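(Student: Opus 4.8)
The plan is to show that each $X_n$ is sofic, transitive, and aperiodic, and then invoke the standard fact quoted in the excerpt that a transitive sofic shift is topologically mixing if and only if it admits a strongly connected aperiodic presentation. Soficness and transitivity are already noted in the text (a coded shift generated by a finite code is sofic, and is transitive because any word from the code can follow any other), so the real content is aperiodicity, i.e.\ showing that the canonical presenting graph of $X_n$ has period $1$.

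First I would recall the natural presentation of a coded shift: build a graph $G_n$ with one central vertex $\star$ and, for each word $\beta = b_1 b_2 \ldots b_r \in \cB_n$, a directed path $\star \to \bullet \to \cdots \to \bullet \to \star$ of length $r = |\beta|$ whose edge labels spell out $\beta$ (with fresh intermediate vertices for each word). This graph is strongly connected since every word-path begins and ends at $\star$, and $X_n = X_{G_n}$. The closed paths through $\star$ correspond exactly to finite concatenations of words from $\cB_n$, so the period of $G_n$ is $\gcd\{|\beta| : \beta \in \cB_n\}$ — more precisely the gcd of the set of all lengths of concatenations, which is the same as $\gcd\{|\beta| : \beta \in \cB_n\}$. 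Thus it suffices to check that this gcd equals $1$ for every $n$.

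For the base case $\cB_1 = \{0, 11\}$ we have lengths $1$ and $2$, so $\gcd = 1$. For the inductive step, I would use the structure formula $\cB_{n+1} = \{b_1 \cdots b_{t(n)} \tau(n) : b_i \in \cB_n\}$ together with \eqref{cond:diff-1-strong}, which says the lengths of words in $\cB_n$ fill an entire interval $[s(n), \ell(n)]$ with $s(n) < \ell(n)$ (since $|\cB_1| > 1$ forces $|\cB_n| > 1$). Taking $t(n)$ copies and appending the fixed suffix $\tau(n)$, the set of lengths of words in $\cB_{n+1}$ contains $\{ k + |\tau(n)| : t(n)\cdot s(n) \le k \le t(n) \cdot \ell(n) \}$, which (because $s(n) < \ell(n)$ and $t(n) \ge 2$, so the interval of attainable $k$ has length at least $1$) is again an interval of consecutive integers of length $\ge 1$; two consecutive integers are coprime, so $\gcd\{|\beta| : \beta \in \cB_{n+1}\} = 1$. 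This also re-establishes \eqref{cond:diff-1-strong} at level $n+1$, closing the induction. Hence $G_n$ is a strongly connected aperiodic presentation of $X_n$, so $X_n$ is topologically mixing.

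The only mild subtlety — and the step I would be most careful with — is the identification of the period of the coded-shift presentation with $\gcd\{|\beta| : \beta \in \cB_n\}$: one must note that although the presenting graph $G_n$ has cycles only through $\star$ (the word-paths themselves are not closed unless they return to $\star$), every cycle in $G_n$ is a concatenation of such word-paths, so the set of cycle lengths is the numerical semigroup generated by $\{|\beta| : \beta \in \cB_n\}$, whose gcd is $\gcd\{|\beta|\}$. Once this bookkeeping is in place the argument is immediate. (Alternatively, one can bypass the period discussion entirely: since the lengths of words in $\cB_n$ form an interval containing two consecutive integers $r, r+1$, the words $\beta, \beta'$ of those lengths give closed paths at $\star$ of coprime lengths $r$ and $r+1$, which is precisely the criterion ``two closed paths of coprime lengths'' for aperiodicity stated in the excerpt.)
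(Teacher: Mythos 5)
Your argument is correct and follows essentially the same route as the paper: soficness and transitivity come from the standard presentation of a finite coded system where every code word is a cycle through a common vertex, and aperiodicity comes from using \eqref{cond:diff-1-strong} to produce two closed paths of coprime (indeed consecutive) lengths. The paper's proof is just a terser version of this; your extra gcd bookkeeping and the induction re-establishing \eqref{cond:diff-1-strong} are fine but not needed beyond what your final parenthetical already observes.
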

\begin{proof}
Any coded system generated by a finite sequence of words is sofic. In the corresponding graphs, every word in $\cB_n$ is represented by a cycle, and all these cycles have a  common vertex. In particular, the graph presenting $X_n$ is strongly irreducible and $X_n$ is transitive. In addition, it follows from \eqref{cond:diff-1-strong} that there are two words in $\cB_n$ with co-prime lengths, so the graph is aperiodic, thus $X_n$ is mixing and has the specification property.
\end{proof}

In the rest of the section, it will be convenient to control the ratio $\frac{s(n)}{\ell(n)}$. Because of the identities
\begin{align*}
	s(n+1) &= t(n)s(n) + \abs{\tau(n)},\\
	\ell(n+1) &= t(n)\ell(n) + \abs{\tau(n)},
\end{align*}
we get that the ratio is increasing and
$$\frac{s(n)}{\ell(n)}\ge\frac12,\qquad n\ge 1.$$

On the other, we can ensure that
\begin{equation}\label{eq:2vs3-ratio}
\frac{s(n)}{\ell(n)}< 2/3, \qquad n\ge 1
\end{equation}
by satisfying the equivalent condition (the equivalence follows from the inductive definition of $s(n)$ and $\ell(n)$ mentioned above)
\begin{equation}\label{eq:2vs3-for-t}
t(n)> \frac{|\tau(n)|}{2\ell(n)-3 s(n)}, \qquad n\ge 1.
\end{equation}
Since $|\tau(n)|$, $\ell(n)$ and $s(n)$ are determined by $t(i)$, $1\le i<n$, we have enough freedom to construct the sequence $t(n)$ satisfying condition \eqref{eq:2vs3-for-t} in an inductive way.

\begin{prop}\label{prop:minimal_Cauchy}
  Let $\eps>0$ and $t(n)$ be such a sequence that satisfies condition \eqref{eq:2vs3-for-t} and
\begin{equation}\label{eq:tn-large-for-d}
t(n)> \frac{|\tau(n)|+3\ell(n)}{s(n)\eps 2^{-n}}, \qquad n\ge 1.
\end{equation}
Then
\begin{equation}\label{eq:361}
  \sum^\infty_{n=1}\Hdbar(X_n,X_{n+1})<\eps.
\end{equation}
\end{prop}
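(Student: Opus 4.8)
The plan is to exploit the inclusions $X_{n+1}\subseteq X_n$ and the shift-invariance of both $\dbar$ and $X_{n+1}$. Because $\dbar(y,X_n)=0$ for every $y\in X_{n+1}$, we have $\Hdbar(X_n,X_{n+1})=\sup_{x\in X_n}\dbar(x,X_{n+1})$. Every $x\in X_n$ is a shift of an infinite concatenation of words from $\cB_n$, and a shift by $p<\ell(n)$ of such a concatenation can be handled by shifting the approximant as well; so it suffices to show that for every $x=v_1v_2v_3\cdots$ with $v_i\in\cB_n$ there is $\hat x\in X_{n+1}$ with $\dbar(x,\hat x)\le\frac{3\ell(n)+|\tau(n)|}{t(n)s(n)}$, which is $<\eps2^{-n}$ by \eqref{eq:tn-large-for-d}. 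Summing over $n$, and using that the inequality in \eqref{eq:tn-large-for-d} is strict, gives \eqref{eq:361}. (One first records that $t(n)\ge 4$: for $n=1$ this is immediate from \eqref{eq:2vs3-for-t}, which forces $t(1)>3$, and for $n\ge 2$ it follows from $|\tau(n)|/(2\ell(n)-3s(n))\ge|\cB_n|/4\ge 4$.)

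To build $\hat x$ I would read $x$ in \emph{super-blocks}, turning each one into a single word of $\cB_{n+1}$, i.e.\ a block $b_1\cdots b_{t(n)}\tau(n)$ with $b_i\in\cB_n$. In a super-block, the first $t(n)-3$ of the $b_i$ are taken to be the next $t(n)-3$ words of $x$ \emph{verbatim}; the last three, $b_{t(n)-2},b_{t(n)-1},b_{t(n)}$, are ``fillers'' whose \emph{lengths} we may prescribe, since by \eqref{cond:diff-1-strong} the lengths occurring in $\cB_n$ fill the whole integer interval $[s(n),\ell(n)]$ and hence sums of three such lengths fill $[3s(n),3\ell(n)]$; then comes $\tau(n)$. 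The crucial point is \emph{resynchronisation}. After copying the first $t(n)-3$ words, $x$ and the partial $\hat x$ agree and sit at a common word boundary of the $\cB_n$-decomposition of $x$. I then pick a number $q'$ of further words of $x$ to ``skip'' and choose the three filler lengths so that the total length of the $q'$ skipped words equals (sum of the three filler lengths) $+\,|\tau(n)|$. This is possible because, as $q'$ runs through $0,1,2,\dots$, the left-hand side increases in steps of size $\le\ell(n)$, whereas the right-hand side can be made equal to any integer in the interval $[\,3s(n)+|\tau(n)|,\;3\ell(n)+|\tau(n)|\,]$, whose length $3(\ell(n)-s(n))$ is $\ge\ell(n)-1$ by \eqref{eq:2vs3-ratio}; hence the smallest partial sum reaching the left endpoint of that interval already lies inside it. After the super-block, $x$ and $\hat x$ are again aligned at a common word boundary of $x$, and iterating produces $\hat x\in X_{n+1}$.

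For the estimate: on the first $t(n)-3$ words of each super-block $x$ and $\hat x$ coincide letter for letter at identical positions, and they can differ only on the ``overhead'' — the three fillers together with $\tau(n)$ — which occupies $\le 3\ell(n)+|\tau(n)|$ coordinates; meanwhile each super-block has total length at least $(t(n)-3)s(n)+3s(n)+|\tau(n)|=t(n)s(n)+|\tau(n)|\ge t(n)s(n)$. Consequently, in every prefix the density of coordinates on which $x$ and $\hat x$ differ is at most $\frac{3\ell(n)+|\tau(n)|}{t(n)s(n)}$ up to a boundary term that disappears in the $\limsup$, so $\dbar(x,\hat x)\le\frac{3\ell(n)+|\tau(n)|}{t(n)s(n)}$, and \eqref{eq:tn-large-for-d} finishes the argument.

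The routine parts are the reduction via the inclusion and shift-invariance and the final density bookkeeping; the one genuinely delicate step is the resynchronisation. A naive approach — simply inserting $\tau(n)$ after every $t(n)$ copied words — fails badly, since the insertions accumulate a linear drift and make $x$ and $\hat x$ disagree on almost all coordinates. Conditions \eqref{cond:diff-1-strong} and \eqref{eq:2vs3-ratio} are precisely what is needed to reset this drift at a cost of only $O(\ell(n)+|\tau(n)|)$ coordinates per super-block, a cost that \eqref{eq:tn-large-for-d} renders summable.
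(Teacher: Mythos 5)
Your proposal is correct and follows essentially the same strategy as the paper's proof: both copy most of the $\cB_n$-blocks of $x$ verbatim into words of $\cB_{n+1}$ and then use the interval property \eqref{cond:diff-1-strong} together with the ratio bound \eqref{eq:2vs3-ratio} (equivalently \eqref{eq:2vs3-for-t}) to resynchronise at a word boundary at a cost of at most $3\ell(n)+|\tau(n)|$ coordinates per super-block of length at least $t(n)s(n)$, after which \eqref{eq:tn-large-for-d} gives $\Hdbar(X_n,X_{n+1})<\eps 2^{-n}$. The only difference is bookkeeping — you place the three length-adjusting filler words at the end of each super-block, while the paper appends $\tau(n)$ after $t(n)$ copied words and resynchronises by replacing the first few words of the remaining tail — and your resynchronisation and density estimates are sound.
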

\begin{proof}



Put $\eps_n=\eps 2^{-n}$. We will show that $\Hdbar(X_n,X_{n+1}) < \eps_n$ for all $n \geq 1$, which directly implies \eqref{eq:361}. Fix $n\ge 1$ and $y\in X_n$. Our goal is to find $z\in X_{n+1}$ such that $\dbar(y,z)<\eps_n$.
Since $\dbar$ is shift invariant, without loss of generality we assume that $y$ is a concatenation of blocks from $\cB_n$, that is, we have
\[
y=b_1b_2b_3\ldots,\quad\text{where }b_j\in\cB_n\text{ for }j=1,2,\ldots.
\]

We will construct $z$ inductively. First, we note that the word
\[w=b_1b_2b_3\ldots b_{t(n)} \tau(n) \]
belongs to $\cB_{n+1}$. Let $j \geq t(n)$ be the index with
\[
|b_1b_2\ldots b_j| \le |w| < |b_1b_2\ldots b_{j+1}|,
\]
and let $a$ be the suffix of $b_{j+1}$ such that $|b_1b_2\ldots b_j b_{j+1}| = |w| + |a|$.
We observe that there exist words $b_{1}', b_{2}', b_{3}' \in \cB_{n} \cup \{\emptyword\}$ such that $|b_{1}'b_{2}' b_{3}'| = |a b_{j+2} b_{j+3}|$. Indeed, if $2s(n) \leq |a b_{j+2} b_{j+3}| \leq 2\ell(n)$ then it follows from \eqref{cond:diff-1-strong} that we can find $b_{1}', b_{2}' \in \cB_{n}$ and $ b_{3}' =\emptyword$ with the required total length. Likewise, if $3s(n) \leq |a b_{j+2} b_{j+3}| \leq 3\ell(n)$ then we can apply the same argument with $b_{1}', b_{2}', b_{3}' \in \cB_{n}$. Since $|a| < \ell(n)$ and $3s(n) < 2\ell(n)$ (we assumed (\ref{eq:2vs3-for-t}), which is equivalent to (\ref{eq:2vs3-ratio})), these two cases cover all possibilities.

For $i \geq 3$, define $b'_{i} = b_{j+i}$ and let $y' = b'_{1}b'_{2} b'_{3}\dots$. Then $y$ and $w y'$ differ only on the positions where $\tau(n)b'_{1}b'_{2} b'_{3}$ appears in $w y'$, that is, at most on positions between $|w|-|\tau(n)|$ and $|w|+ 3\ell(n)$. Let us point out that $y'$ is again a concatenation of blocks from $\cB_n$, even in the case when $b_3'$ is the empty word. Hence, we can apply the same reasoning to $y'$, $y''$, $y'''$ and so on, to obtain the word $z = w w' w'' \dots$. Here, we adopt the convention that $y'',w'$ are constructed from $y'$ in the same way as $y',w$ were constructed from $y$, and accordingly for further steps in the construction.

Since $w,w',w'' \in \cB_{n+1}$, we have $z \in X_{n+1}$. Note that for each $i \geq 0$ we have $s(n)t(n) + \tau(n) \leq |w^{(i)}| \leq \ell(n)t(n) + \tau(n)$, and consequently
\[
	\dbar(y,z) \leq \limsup_{i \to \infty} \frac{i \tau(n) + 3i\ell(n)}{|w|+|w'| + \dots |w^{(i-1)}|} \leq \frac{ \tau(n) + 3\ell(n)}{\tau(n)+s(n)t(n)} < \eps_n.
      \]
In the last inequality we use the condition (\ref{eq:tn-large-for-d}) that is stronger.

Hence, $z$ has all of the required properties.
\end{proof}

\begin{prop}\label{prop:minimal_also_mixing}
  Let the sequence $t(n)$, $n\ge 1$, satisfy (\ref{eq:2vs3-ratio}) and
\begin{align}
	\label{eq:92:1} t(n) & \geq \frac{\ell(n)}{\ell(n)-s(n)} & \text{ for all $n \geq 1$};\\
	\label{eq:92:2} t(n) & \geq \frac{2s(n)+2\ell(n) + 3\abs{\tau(n)}}{\ell(n)} & \text{ for all $n \geq 1$}.
\end{align}
Then $X$ is mixing.
\end{prop}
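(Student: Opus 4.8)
\emph{Proof plan.} The plan is to verify the definition of topological mixing for $X$ directly. Given $u,v\in\lang(X)=\bigcap_n\lang(X_n)$ we must produce $N\in\N$ such that for every $m\ge N$ there is a word $w$ with $|w|=m$ and $uwv\in\lang(X)$. First, exactly as in the proof of Proposition~\ref{prop:X-is-minimal}, there is $n_0$ such that for every $n\ge n_0$ both $u$ and $v$ occur in \emph{every} word of $\cB_n$ and $\max(|u|,|v|)<s(n)$; also $t(n)\ge 6$ for all $n$ by \eqref{eq:92:2} (since $\cB_n$ has a word of every length in $[s(n),\ell(n)]$ by \eqref{cond:diff-1-strong}, so $|\tau(n)|\ge 2s(n)$, and $s(n)/\ell(n)\ge 1/2$), and in fact $\ell(n)-s(n)=\prod_{k<n}t(k)\to\infty$ together with $|\tau(n)|\ge(\ell(n)-s(n))s(n)$ gives $t(n)\to\infty$, so we may assume $t(n)$ is as large as convenient for $n\ge n_0$. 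The key reduction is: by \eqref{cond:B_n_subwords}, if $uwv$ is a factor of a single word of $\cB_N$ for \emph{some} $N$, then $uwv$ is a factor of a word of $\cB_M$ for all $M\ge N$, and since also $\lang(X_N)\subseteq\lang(X_M)$ for $M\le N$, we get $uwv\in\bigcap_M\lang(X_M)=\lang(X)$. Hence it suffices to show that there are a constant $\alpha$ (depending only on $u,v,n_0$) and an index $N_1$ such that for all $N\ge N_1$ the set
\[
\Gamma_N:=\{\,|w|:\ uwv\text{ is a factor of some word of }\cB_N\,\}
\]
contains the interval $[\alpha,\tfrac12\ell(N)]$. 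Indeed, since $\ell(N)\to\infty$ this yields $\bigcup_{N\ge N_1}\Gamma_N\supseteq[\alpha,\infty)$, and $N:=\lceil\alpha\rceil$ works.

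To prove the claim about $\Gamma_N$ one exploits the nested block structure: a word of $\cB_N$ is a free concatenation $B_1\cdots B_{t(N-1)}\tau(N-1)$ of words of $\cB_{N-1}$, each of these is again a free concatenation of words of $\cB_{N-2}$, and so on down to scale $n_0$; by \eqref{cond:diff-1-strong}, at every scale $k$ the available block lengths fill the whole interval $[s(k),\ell(k)]$, so a prescribed number of freely chosen $\cB_k$-blocks realizes every integer length in the corresponding interval. One places $u$ inside a $\cB_{n_0}$-subblock near the beginning of the chosen word and $v$ inside a $\cB_{n_0}$-subblock further along — legitimate because $u$ and $v$ occur in every $\cB_{n_0}$-word — so that $|w|$ splits as (suffix of the block of $u$ after $u$) $+$ (sum of lengths of the complete free blocks lying strictly between the two positions, at whatever scales separate them) $+$ (prefix of the block of $v$ before $v$). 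The three hypotheses enter as follows. Condition \eqref{eq:2vs3-ratio}, i.e.\ $s(k)/(\ell(k)-s(k))<2$, guarantees that the length intervals coming from $k$ and $k+1$ free $\cB_k$-blocks overlap as soon as $k\ge 2$, so a fixed scale produces a run of achievable lengths with no internal gaps. Condition \eqref{eq:92:1}, i.e.\ $t(k)(\ell(k)-s(k))\ge\ell(k)$, ensures that one scale's worth of free blocks already sweeps out a run of lengths longer than a full block $\ell(k)$; this makes the achievable lengths at scale $k$ merge with those at scale $k+1$ (and lets a single free $\cB_k$-block bridge the rigid spacing of occurrences of $u$, $v$ and of the blocks inside $\tau(k-1)$). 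Condition \eqref{eq:92:2}, which gives $|\tau(k)|<\tfrac13 t(k)\ell(k)$ and hence $\ell(k+1)<\tfrac43 t(k)\ell(k)$, controls the rigid suffixes $\tau(k)$ at the end of every word of $\cB_{k+1}$: these fixed blocks, and the a priori uncontrolled positions of $u$ and $v$ inside $\cB_{n_0}$-words, amount only to a bounded perturbation that the free blocks absorb, while $\ell(k+1)$ stays comparable to $t(k)\ell(k)$ so that the run of achievable lengths reaches up past $\tfrac12\ell(N)$. Putting the scales $n_0,n_0+1,\dots,N-1$ together, one concludes that $\Gamma_N$ contains a single interval from some $\alpha\le C\ell(n_0)$ (a constant independent of $N$) up to $\ge\tfrac12\ell(N)$ for all large $N$.

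The main obstacle is exactly this last bookkeeping. One must organise the various families of achievable gap lengths — indexed by the scale at which $u$ and $v$ are placed, by which of the (rigidly positioned) occurrences of $u$ and $v$ inside $\cB_{n_0}$-words are used, and by the rigid suffixes $\tau(k)$ — and check that consecutive families overlap, so that no integer of $[\alpha,\tfrac12\ell(N)]$ is missed. This is a finite but somewhat intricate case analysis, and it is precisely here that the explicit numerical slack in \eqref{eq:2vs3-ratio}, \eqref{eq:92:1} and \eqref{eq:92:2} is consumed; everything else (the reduction to $\Gamma_N$, the passage from ``factor of one word of $\cB_N$'' to ``element of $\lang(X)$'', and the chaining $\bigcup_N\Gamma_N\supseteq[\alpha,\infty)$) is routine.
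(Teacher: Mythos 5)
Your plan has the right architecture and correctly identifies where each hypothesis should enter, but the entire substantive content of the proposition is deferred to the unexecuted ``finite but somewhat intricate case analysis'' at the end. The reduction to $u,v\in\cB_{n_0}$, the passage from ``factor of a word of $\cB_N$'' to ``element of $\lang(X)$'' via \eqref{cond:B_n_subwords}, and the chaining over $N$ are indeed routine; what is \emph{not} routine is verifying that the set of achievable gap lengths has no holes, and that is precisely the proof. As written, the claim that $\Gamma_N\supseteq[\alpha,\tfrac12\ell(N)]$ with $\alpha$ independent of $N$ is asserted, not established: you never exhibit how a gap length just above $(t(n_0)-2)\ell(n_0)$ (where the free $\cB_{n_0}$-blocks inside a single $\cB_{n_0+1}$-word run out) is realized, which is exactly the point where the rigid suffix $\tau(n_0)$ must be crossed and where the argument could fail without \eqref{eq:92:1} and \eqref{eq:92:2}.

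The paper closes this gap with a clean induction on the gap length $m$, proving: for all $m\ge 2s(n)$ and $u,v\in\cB_n$ there is $w$ with $\abs{w}=m$ and $uwv\in\lang(X)$. Three cases are distinguished. For $2s(n)\le m\le (t(n)-2)\ell(n)$, condition \eqref{eq:2vs3-ratio} makes the intervals $[js(n),j\ell(n)]$, $2\le j\le t(n)-2$, cover $[2s(n),(t(n)-2)\ell(n)]$, so by \eqref{cond:diff-1-strong} one realizes $w$ as a block $b_1\dots b_j$ inside a single word of $\cB_{n+1}$. For $(t(n)-2)\ell(n)<m\le(2t(n)-2)\ell(n)+\abs{\tau(n)}$, condition \eqref{eq:92:2} guarantees $m\ge 2s(n)+\abs{\tau(n)}$, so $w$ can be taken of the form $b_1\dots b_j\,\tau(n)\,b_1'\dots b_k'$ straddling the junction of two $\cB_{n+1}$-words. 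For larger $m$, one embeds $u,v$ into words $u',v'\in\cB_{n+1}$ and applies the inductive hypothesis at scale $n+1$ with $m'=m-\abs{\tau(n)}<m$; condition \eqref{eq:92:1} is what ensures $m'\ge 2s(n+1)$, so the recursion is legitimate. I would recommend recasting your bookkeeping in this inductive form: it replaces the global interval-overlap analysis across all scales simultaneously by a single descent step, and it is where the specific numerical form of \eqref{eq:92:1} and \eqref{eq:92:2} is actually consumed. Until some such verification is written out, the proposal does not constitute a proof.
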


\begin{proof}
We need to show that for all $u,v \in \cB(X)$ there exists $M$ such that for each $m \geq M$ there exists a word $w$ with $\abs{w} = m$ such that $uwv \in \cB(X)$. Note that we can freely replace $u,v$ with any other words $u',v' \in \cB(X)$ which contain $u,v$ as subwords. Hence (repeating an argument from the proof of Prop.\ \ref{prop:X-is-minimal}) we may assume that $u,v \in \cB_n$ for some $n \geq 1$. Proceeding by induction on $m$, we will prove the following statement:

\textbf{Claim:} For all $m \geq 0$, for all $n \geq 1$ such that $2s(n) \leq m$, for all $u,v \in \cB(X)$, there exists $w$ with $\abs{w} = m$ and $uwv \in \cB(X)$.

We may assume that the claim above has been proved for all $m' < m$. We consider three cases depending on the magnitude of $m$.

\textit{Case 1:} Suppose first that $2s(n) \leq m \leq (t(n)-2)\ell(n)$. Note that for each $j \geq 2$ it follows from (\ref{eq:2vs3-ratio}) that
\[
	(j+1)s(n) \leq \frac{3}{2} j \cdot \frac{2}{3} \ell(n) = j \ell(n).
\]
As a consequence, the intervals $\left[ js(n), j\ell(n) \right]$ for $2 \leq j \leq t(n)-2$ fully cover the interval $\left[2s(n),(t(n)-2) \ell(n)\right]$. Hence, we can find $j$ with $2 \leq j \leq t(n)-2$ such that $js(n) \leq m \leq j\ell(n)$. It follows from \eqref{cond:diff-1-strong} that there exists a word $w$ with $\abs{w} = m$ of the form $w = b_1b_2\dots b_j$ with $b_1,b_2,\dots,b_j \in \cB_n$. Thus, $uwv$ is a prefix of $\cB_{n+1}$ (specifically, of any word of the form $ub_1b_2\dots b_j v b_1' b_2\dots b_i \tau(n)$, where $i = t(n)-j-2$ and $b_1',b_2',\dots,b_i'\in \cB_n$). It follows that $uwv \in \cB(X)$.

\textit{Case 2:} Suppose second that $(t(n)-2)\ell(n) < m \leq (2t(n)-2)\ell(n)+\abs{\tau(n)}$. Then, by \eqref{eq:92:2} we have $m \geq 2 s(n) + \abs{\tau(n)}$. Arguing similarly as in the first case, we can find a word $w$ with $\abs{w} = m$ of the form
\[
w = b_1b_2\dots b_j \tau(n) b_1'b_2'\dots b_k',
\]
where $1 \leq j,k \leq t(n)-1$ and $b_1,b_2,\dots b_j, b_1',b_2',\dots, b_k' \in \cB_n$. Thus, $uwv$ is a subword of the concatenation $c_1c_2$ of two words $c_1,c_2 \in \cB_{n+1}$. Hence, $uwv$ is a subword of a word in $\cB_{n+2}$ and thus $uwv \in \cB(X)$.

\textit{Case 3:} Suppose third that $m > (t(n)-2)\ell(n) + \tau(n)$. Put
\begin{align*}
	u' &= b_1b_2\dots b_{t(n)-1} u \tau(n) \in \cB_{n+1},\\
	v' &= b_2'b_3' \dots b_{t(n)}' \tau(n) \in \cB_{n+1},
\end{align*}
where $b_1,b_2,\dots,b_{t(n)-1},b_2',b_3',b_{t(n)}' \in \cB_n$ are arbitrary. Put also $n' = n+1$ and $m' = m-\abs{\tau(n)}$. By \eqref{eq:92:1}, we have
\[
	m' \geq 2\left(t(n)s(n)+\abs{\tau(n)}\right) = 2 s(n').
\]
Hence, by the inductive assumption, there exists a word $w'$ with $\abs{w'} = m$ such that $u'w'v' \in \cB(X)$. It remains to notice that $uwv$ is a subword of $u'w'v'$, where $w = \tau(n) w$ has length $\abs{w} = \abs{\tau(n)} + \abs{w'} = m$.
\end{proof}

\begin{thm}\label{thm:exist_minimal_dshadowing}
  There exists a sequence of positive integers $(t(n))_{n=1}^\infty$ such that $X$ is minimal, mixing and has positive entropy and $\dbar$-shadowing property. In particular, the shift space $X$ has entropy-dense and uncountable set of ergodic measures.
\end{thm}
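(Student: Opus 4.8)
The plan is to choose the sequence $(t(n))_{n\ge1}$ large enough at every stage so that the hypotheses of Propositions \ref{prop:minimal_Cauchy} and \ref{prop:minimal_also_mixing} hold at once, and then to quote an earlier result for each asserted property; the only point requiring genuine work will be the lower bound on entropy. Fix $\eps=\tfrac12$. Since $s(n),\ell(n),|\tau(n)|$ and $k(n):=|\cB_n|$ are determined by $\cB_1=\{0,11\}$ together with $t(1),\dots,t(n-1)$, and since each of \eqref{eq:2vs3-for-t}, \eqref{eq:tn-large-for-d}, \eqref{eq:92:1}, \eqref{eq:92:2} is a lower bound on $t(n)$ in terms of these quantities, I would define $t(n)\ge2$ recursively to exceed the maximum of the four bounds. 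The recursion is legitimate: $s(1)/\ell(1)=\tfrac12<\tfrac23$, and \eqref{eq:2vs3-for-t} at stage $n$ is equivalent to $s(n+1)/\ell(n+1)<\tfrac23$, i.e.\ to \eqref{eq:2vs3-ratio} at the next stage, which keeps the denominator of \eqref{eq:2vs3-for-t} positive throughout.

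Granting this choice, minimality of $X=\bigcap_n X_n$ is Proposition \ref{prop:X-is-minimal}, and mixing of $X$ is Proposition \ref{prop:minimal_also_mixing}. Each $X_n$ is a mixing sofic shift, the sequence $(X_n)_n$ is decreasing by \eqref{cond:B_n_subwords}, and $\sum_n\Hdbar(X_n,X_{n+1})<\eps<\infty$ by Proposition \ref{prop:minimal_Cauchy}; hence Theorem \ref{gen-scheme-summ} delivers $\sigma(X)=X$, the $\dbar$-shadowing property of $X$, and $\Hdbarm(\Mse(X_n),\Mse(X))\to0$. Since every mixing sofic $X_n$ has the specification property (as noted above), its ergodic measures are entropy dense by \cite{EKW}; feeding this into Theorem \ref{gen-scheme-summ} exactly as in the proof of Proposition \ref{prop:proximal} shows $\Mse(X)$ is entropy dense in $\Ms(X)$.

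For $\htop(X)>0$ I would argue as follows. The code $\cB_1=\{0,11\}$ is uniquely decipherable (reading left to right, a $0$ is the word $0$ and a $1$ begins the word $11$), and if $\cB_n$ is uniquely decipherable then so is $\cB_{n+1}$ (decode into $\cB_n$-words and group them into consecutive blocks of $t(n)+k(n)$ words); in particular the map $(b_1,\dots,b_{t(n)})\mapsto b_1\cdots b_{t(n)}\tau(n)$ is injective, so $k(n+1)=k(n)^{t(n)}$ and $\log k(n+1)=t(n)\log k(n)$. From $|\tau(n)|\ge k(n)s(n)$ and \eqref{eq:tn-large-for-d} one gets $t(n)>k(n)2^{n}$, hence $|\tau(n)|\le k(n)\ell(n)<2^{-n}t(n)\ell(n)$ and $\ell(n+1)=t(n)\ell(n)+|\tau(n)|<(1+2^{-n})t(n)\ell(n)$. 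Therefore $\frac{\log k(n+1)}{\ell(n+1)}>\frac{1}{1+2^{-n}}\cdot\frac{\log k(n)}{\ell(n)}$, so $\log k(n)/\ell(n)\ge c:=\tfrac{\log2}{2}\prod_{j\ge1}(1+2^{-j})^{-1}>0$ for all $n$. Moreover $\cB_n\subseteq\lang(X)$: for $m\ge n$ every $u\in\cB_n$ is a subword of every word of $\cB_m$ by \eqref{cond:B_n_subwords}, and for $m<n$ it is a concatenation of $\cB_m$-words, so $u\in\lang(X_m)$ for all $m$. As $\cB_n$ consists of $k(n)$ distinct words of lengths in $[s(n),\ell(n)]$, by pigeonhole there is $m_n\in[s(n),\ell(n)]$ with $|\lang_{m_n}(X)|\ge k(n)/\ell(n)$, and then $\tfrac{1}{m_n}\log|\lang_{m_n}(X)|\ge\tfrac{1}{\ell(n)}(\log k(n)-\log\ell(n))\ge c-\tfrac{\log\ell(n)}{\ell(n)}$. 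Since $m_n\ge s(n)\to\infty$, letting $n\to\infty$ in $\htop(X)=\lim_m\tfrac1m\log|\lang_m(X)|$ gives $\htop(X)\ge c>0$.

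Finally, uncountability of $\Mse(X)$ follows from positive entropy and $\dbarm$-stability. By Proposition \ref{prop:dbar-shadowing-stab}, $X$ is $\dbarm$-stable. If $\Ms(X)$ were a singleton $\{\mu\}$, then $X$ would be strictly ergodic (it is minimal and, having positive entropy, infinite), so the proposition asserting that a strictly ergodic $\dbarm$-stable shift has its unique measure isomorphic to an odometer would give $h(\mu)=0$, contradicting $\htop(X)=h(\mu)>0$ by the variational principle. Thus $\Ms(X)$ is not a singleton, and since $X$ is $\dbarm$-stable, $\Ms(X)$ is the Poulsen simplex (see the discussion around Proposition \ref{prop:dbar-stab-implies-ent-dens}), whose extreme points $\Mse(X)$ form an uncountable set. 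The step I expect to be most delicate is the entropy estimate above: the bookkeeping yielding $k(n+1)=k(n)^{t(n)}$ and keeping $\ell(n+1)$ comparable to $t(n)\ell(n)$ is precisely what forces condition \eqref{eq:tn-large-for-d} to be so demanding (namely $t(n)$ must dominate $|\cB_n|$); once this is in place, the remaining assertions are only a matter of invoking Propositions \ref{prop:X-is-minimal}, \ref{prop:minimal_Cauchy}, \ref{prop:minimal_also_mixing}, Theorem \ref{gen-scheme-summ}, and Proposition \ref{prop:dbar-shadowing-stab}.
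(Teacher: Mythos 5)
Your proposal is correct, and it follows the paper's skeleton for everything except the last two claims: like the paper, you build $t(n)$ recursively so that \eqref{eq:2vs3-for-t}, \eqref{eq:tn-large-for-d}, \eqref{eq:92:1} and \eqref{eq:92:2} hold simultaneously (each being a lower bound on $t(n)$ in terms of data determined by $t(1),\dots,t(n-1)$), and then quote Propositions \ref{prop:X-is-minimal}, \ref{prop:minimal_Cauchy}, \ref{prop:minimal_also_mixing} and Theorem \ref{gen-scheme-summ} for minimality, mixing, $\dbar$-shadowing and entropy density. Where you diverge is positive entropy and uncountability. The paper first calibrates $\eps$ against the uniform $\dbarm$-continuity of entropy, so that $\sum_n\Hdbar(X_n,X_{n+1})<\eps$ yields $\Hdbarm(\Mse(X_1),\Mse(X))<\eps$, and then transports to $X$ an ergodic measure of entropy $h=\htop(X_1)$ and one of entropy $0$ from $X_1$; this produces two invariant measures on $X$ whose entropies differ by at least $h/3$, giving positive entropy and a nontrivial simplex in one stroke. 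You instead fix $\eps=1/2$ and prove $\htop(X)>0$ by direct counting: unique decipherability of the prefix code $\cB_1=\{0,11\}$ propagates to every $\cB_n$, giving $k(n+1)=k(n)^{t(n)}$, while \eqref{eq:tn-large-for-d} forces $t(n)>2^nk(n)$, so $|\tau(n)|$ is a negligible fraction of $\ell(n+1)$ and $\log k(n)/\ell(n)$ stays bounded below by an explicit constant; a pigeonhole over word lengths then bounds $\htop(X)$ from below. Your route buys an explicit entropy bound independent of the $\dbarm$-continuity of entropy; the paper's route is softer and settles non-unique ergodicity simultaneously, whereas you must rule out strict ergodicity separately (via $\dbarm$-stability, the odometer proposition and the variational principle) before invoking the Poulsen dichotomy. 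Both arguments are sound. Two small points worth making explicit in a final write-up: the step $\frac{1}{m_n}\log|\lang_{m_n}(X)|\ge\frac{1}{\ell(n)}\left(\log k(n)-\log\ell(n)\right)$ uses $m_n\le\ell(n)$ and therefore needs $\log k(n)\ge\log\ell(n)$, which holds for large $n$ because $\log k(n)\ge c\,\ell(n)$; and the inclusion $\cB_n\subseteq\lang(X)$ relies on $\lang(X)=\bigcap_m\lang(X_m)$, which for a decreasing intersection of shift spaces requires the standard compactness argument (as used implicitly in Proposition \ref{prop:X-is-minimal}).
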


\begin{proof}
  Since $X_1$ is a mixing sofic shift it has topological entropy $h>0$. By the uniform continuity of the entropy function with respect to $\dbarm$-distance, there is $\eps>0$ such that for every $\mu,\mu'\in\Ms(\{0,1\}^\infty)$ with $\dbarm(\mu,\mu')<\eps$ we have   \( |h(\mu)-h(\mu')|<h/3\). Fix this $\eps$.

  All the conditions (\ref{eq:2vs3-for-t}), (\ref{eq:tn-large-for-d}) with respect to $\eps$ specified above, (\ref{eq:92:1}) and (\ref{eq:92:2}) can be satisfied simultaneously by one sequence $t(n)$, $n\ge 1$. Indeed, it is enough to construct the sequence inductively and take $t(n)$ large enough with respect to right-hand sides of the conditions which all depend only on the previously taken $t(i)$, $i<n$. Such a sequence then satisfies all assumptions of Propositions \ref{prop:minimal_Cauchy} and \ref{prop:minimal_also_mixing}. Hence, $X$ is mixing and $\dbar$-approachable from above by mixing sofic shifts. In particular, the shift has entropy-dense set of ergodic measure and is $\dbar$-shadowing.

Now, it suffices to find two invariant measures on $X$ with different entropies. By the variational principle, there is an ergodic invariant measure $\nu_1$ on $X_1$ with entropy $h$. Since $X_1$ contains a periodic point, there is also an ergodic invariant measure $\nu_2$ on $X_1$ of entropy zero. The inequality $\Hdbarm(\Mse(X_n),\Mse(X))<\eps/2$ ensures that there is an invariant measure $\nu'_1$ on $X$ that is $\eps$-close to $\nu_1$ in $\dbarm$-distance and so $h/3$-close to $\nu_1$ in entropy. By the same argument, there is an invariant measure $\nu'_2$ on $X$ that is $h/3$-close to $\nu_2$ in entropy. Since the difference between $h(\nu_1)$ and $h(\nu_2)$ equals $h$, the measures $\nu'_1$ and $\nu'_2$ have different entropy and in particular are distinct.
  \end{proof}


\section*{Acknowledgments}
The research cooperation between Michal Kupsa and Dominik Kwietniak was partially funded by the program Excellence Initiative --- Research University under the Strategic Programme Excellence Initiative at Jagiellonian University in Kraków. The program supported the research stay of Michal Kupsa in Kraków in May--June 2023 during which the present paper was (almost) finished.
The research of Melih Emin Can  is part of the project No.\ 2021/43/P/ST1/02885 co-funded by the National Science Centre and the European Union's Horizon 2020 research and innovation programme under the Marie Sklodowska-Curie grant agreement no. 945339.
When this paper was being finished, D.~Kwietniak was partially supported by the Flagship Project "Central European Mathematical Research Lab" under the Strategic Programme Excellence Initiative at Jagiellonian University. J.~Konieczny is supported by UKRI Fellowship EP/X033813/1, and during a significant part of the work on this project he was working within the framework of the LABEX MILYON (ANR-10-LABX-0070) of Universit\'{e} de Lyon, within the program ``Investissements d'Avenir'' (ANR-11-IDEX-0007) operated by the French National Research Agency (ANR). He also acknowledges support from the Foundation for Polish Science (FNP). The authors would like to express their gratitude to Tim Austin for sharing a very early version of his paper \cite{Austin} and an enlightening discussion. We would like to thank Piotr Oprocha for allowing us to use his ideas that lead to the construction of the minimal examples of $\dbar$-approachable shift spaces in Section \ref{sec:minimal}. Thanks are also due to Alexandre Trilles, who helped us to find and fix numerous minor faults in Sections \ref{proximal} and \ref{sec:minimal}.  Melih Emin Can would like to thank Damla Bulda\u{g} Can for her constant love and support. Finally, we are grateful to the referee for careful reading of our paper and for helpful corrections and suggestions.

\end{document}